\DeclareMathOperator*{\argmin}{argmin}
\newcommand{\footremember}[2]{
   \footnote{#2}
    \newcounter{#1}
    \setcounter{#1}{\value{footnote}}
}
\newcommand{\leqnomode}{\tagsleft@true}
\newcommand{\reqnomode}{\tagsleft@false}
\numberwithin{equation}{section}
\newtheorem{thm}{Theorem}[section]
\newtheorem{lem}[thm]{Lemma}
\newtheorem{prop}[thm]{Proposition}
\newenvironment{manualtheorem}[1]{%
  \manualtheoreminner
}{\endmanualtheoreminner}
\theoremstyle{remark}
\newtheorem{rem}[thm]{Remark}
\newcommand{\widesim}[2][1.5]{
  \mathrel{\overset{#2}{\scalebox{#1}[1]{$\sim$}}}
}
\newcommand{\vertiii}[1]{{\left\vert\kern-0.25ex\left\vert\kern-0.25ex\left\vert #1 
    \right\vert\kern-0.25ex\right\vert\kern-0.25ex\right\vert}}
\newcommand{\vertii}[1]{\left\Vert #1\right\Vert}
\newcommand{\Proj}{\mathrm{Proj}}
\newcommand{\tabincell}[2]{\begin{tabular}{@{}#1@{}}#2\end{tabular}}
\newcommand\independent{\protect\mathpalette{\protect\independenT}{\perp}}
\def\independenT#1#2{\mathrel{\rlap{$#1#2$}\mkern2mu{#1#2}}}
\newcommand{\PPC}{\mathcal{P}_{p}\big(\mathcal{C}([0, T], \mathbb{R}^{d})\big)}
\newcommand{\CPP}{\mathcal{C}\big([0, T], \mathcal{P}_{p}(\mathbb{R}^{d})\big)}
\newcommand{\CRD}{\mathcal{C}([0, T], \mathbb{R}^{d})}
\newcommand{\PP}{\mathbb{P}}
\newcommand{\RD}{\mathbb{R}^{d}}
\newcommand{\RR}{\mathbb{R}}
\newcommand{\EE}{\mathbb{E}\,}
\newcommand{\LPC}{L_{\CRD}^{p}\big(\Omega, \mathcal{F}, (\mathcal{F}_{t})_{t\geq0}, \mathbb{P}\big)}
\begin{document}
\title{Particle method and quantization-based schemes for the simulation of the McKean-Vlasov equation}

\author{%
 Yating Liu\footremember{a}{\small CEREMADE, CNRS, UMR 7534, Université Paris-Dauphine, PSL University, 75016 Paris, France, \texttt{liu@ceremade.dauphine.fr}. This paper is Chapter 7 of the author's Ph.D. thesis \cite{liuthese}. }%
  }

\maketitle

%
%
\begin{abstract} In this paper, we study three numerical schemes for the McKean-Vlasov equation 
\[\hspace{-2cm}\begin{cases}
\;dX_t=b(t, X_t, \mu_t) \, dt+\sigma(t, X_t, \mu_t) \, dB_t,\: \\
\;\forall\, t\in[0,T],\;\mu_t \text{ is the probability distribution of }X_t, 
\end{cases}\] 
where $X_0 : (\Omega, \mathcal{F}, \mathbb{P})\rightarrow (\RD, \mathcal{B}(\RD))$ is a known random variable. Under the assumption on the Lipschitz continuity of the coefficients $b$ and $\sigma$, our first result proves the convergence rate of the particle method with respect to the Wasserstein distance, which extends previous work \cite{bossy1997stochastic} established in a one-dimensional setting. In the second part, we present and analyse two quantization-based schemes, including the recursive quantization scheme (deterministic scheme) in the Vlasov setting,  and the hybrid particle-quantization scheme (random scheme  inspired by the $K$-means clustering). Two simulations are presented at the end of this paper: Burgers equation introduced in \cite{bossy1997stochastic} and the network of FitzHugh-Nagumo neurons (see \cite{MR2974499} and \cite{bossy2015clarification}) in dimension 3. 
\end{abstract}
%
%

\bigskip

\noindent\emph{Keywords:} McKean-Vlasov equation, Mean-field limits, Numerical analysis and simulation, Optimal quantization, Particle method, Quantization-based scheme. 

%

\section{Introduction}

The McKean-Vlasov equation was originally introduced in \cite{mckean1967propagation} as a stochastic model naturally associated to a class of non-linear PDEs. Nowadays, it refers to the whole family of stochastic differential equations whose coefficients not only depend on the position of the process $X_t$ at time $t$ but also depend on its probability distribution $\mu_t$. This  distribution dependent structure of the McKean-Vlasov equation is widely used to model phenomenons in statistical physics (see e.g. \cite{martzel2001mean}, \cite{bossy2021instantaneous}), in mathematical biology (see e.g. \cite{MR2974499}, \cite{bossy2015clarification}),  also in social sciences and in quantitative finance, often motivated by the development of the mean field games and the interacting diffusion models (see e.g. \cite{lasry2018mean}, \cite{cardaliaguet2018mean} and \cite{MR3752669}). Besides the original paper \cite{mckean1967propagation}, an excellent reference of the general theory of the McKean-Vlasov equation and the propagation of chaos is \cite{sznitman1991topics}. See also the lecture note \cite{lacker2018mean}.

In this paper, we consider a filtered probability space $(\Omega, \mathcal{F}, (\mathcal{F}_{t})_{t\geq0}, \mathbb{P})$ satisfying the usual conditions and  an $(\mathcal{F}_{t})-$standard Brownian motion $(B_{t})_{t\geq0}$ valued in $\mathbb{R}^{q}$. For a separable Banach space $(E, \vertii{\cdot}_{E})$, we denote the set of probability distributions on $E$ by $\mathcal{P}(E)$ and denote by $\mathcal{P}_{p}(E)$ the set of probability distributions on $E$ having $p$-th finite moment, $p\geq1$. The McKean-Vlasov equation  writes
\begin{align}\label{Aeq}
\begin{cases}
\;dX_t=b(t, X_t, \mu_t) \, dt+\sigma(t, X_t, \mu_t) \, dB_t,\: \\
\;\mu_t \text{ is the probability distribution of }X_t, \; t\in[0, T],\,
\end{cases}
\end{align}
\reqnomode
where $X_{0}: (\Omega, \mathcal{F},  \mathbb{P})\rightarrow \big(\mathbb{R}^{d}, \mathcal{B}(\mathbb{R}^{d})\big)$ is a known random variable independent of the Brownian motion $(B_{t})_{t\geq0}$ and $b, \sigma$ are Borel functions defined on $[0, T]\times \mathbb{R}^{d}\times \mathcal{P}(\mathbb{R}^{d})$ having respective values in $\mathbb{R}^{d}$ and in $\mathbb{M}_{d, q}(\mathbb{R})$, the space of real matrices with $d$ rows and $q$ columns.  If there exist Borel functions $\beta: [0, T]\times\mathbb{R}^{d}\times \mathbb{R}^{d}\rightarrow\mathbb{R}^{d}$ and $a: [0, T]\times\mathbb{R}^{d}\times \mathbb{R}^{d}\rightarrow \mathbb{M}_{d, q}(\mathbb{R})$ such that 
\begin{equation}\label{vlasovcase}
b(t, x, \mu)=\int_{\mathbb{R}^{d}}\beta(t, x, u)\mu(du) \;\;\text{and}\;\;\sigma(t, x, \mu)=\int_{\mathbb{R}^{d}}a(t, x, u)\mu(du),
\end{equation}
we call \eqref{Aeq} the Vlasov equation.

This paper aims to show three implementable numerical methods for the simulation  of the McKean-Vlasov equation \eqref{Aeq}, including the particle method and two quantization-based schemes, accompanied by a quantitative analysis of  the corresponding simulation error.

\subsection{Construction of numerical schemes}

\smallskip

The construction of numerical schemes involves two components:  the temporal discretization on $[0,T]$ and the spatial discretization on $\RD$.

\smallskip
\noindent {\sc (1) Temporal discretization by theoretical Euler scheme}

\smallskip
We fix  $M\in\mathbb{N}^{*}$ and  set $h=\frac{T}{M}$ as the time step. Let $t_{m}=t_{m}^{M}\coloneqq m\cdot h, \: 0\leq m\leq M$ and let $Z_{m+1}\coloneqq\frac{1}{\sqrt{h}}(B_{t_{m+1}}\!\!-B_{t_{m}}), \; 0\leq m\leq M-1$.   
The \textit{theoretical} Euler scheme of (\ref{Aeq}) is defined as follows, 
\begin{align}\label{Ceq}
\bar{X}_{t_{m+1}}^{M}=\bar{X}_{t_{m}}^{M}+h\cdot b(t_{m}^{M}, \bar{X}_{t_{m}}^{M},\bar{\mu}_{t_{m}}^{M})+\sqrt{h\,} \sigma(t_{m}^{M}, \bar{X}_{t_{m}}^{M}, \bar{\mu}_{t_{m}}^{M})Z_{m+1}, \quad \bar{X}^{M}_{0}=X_{0},
\end{align}
where for every $m\in\{0,...,M\}$, $\bar{\mu}_{t_{m}}^{M}$ denotes the probability distribution of $\bar{X}_{t_{m}}^{M}$.  We also define the continuous expansion of $(\bar{X}_{t_0}^M, ..., \bar{X}_{t_M}^M)$, denoted by $(\bar{X}_{t}^{M})_{t\in[0,T]}$,  by the following \textit{continuous} Euler scheme \begin{equation}\label{contEuler}
\forall\, m\!=0, ..., M\!-\!1,\;\forall \,t\!\in\! (t_m, t_{m+1}], \:\; \bar{X}^M_{t}\coloneqq \bar{X}_{t_{m}}^{M}+(t-t_{m}) b(t_{m}^{M}, \bar{X}_{t_{m}}^{M},\bar{\mu}_{t_{m}}^{M})+\sigma(t_{m}^{M}, \bar{X}_{t_{m}}^{M}, \bar{\mu}_{t_{m}}^{M})(B_{t}-B_{t_{m}})
\end{equation}
with the same $\bar{X}^{M}_{0}=X_{0}$. 
When there is no ambiguity, such as when the time discretization number $M$ is fixed, we will omit the superscript $M$ in $\bar{X}^{M}_{t_m}$, $\bar{X}^{M}_t$ and in $\bar{\mu}_{t_{m}}^{M}$ defined in \eqref{Ceq} and in \eqref{contEuler}.

The convergence rate of this theoretical Euler scheme is of order $h^{1/2}$ if the coefficients $b$ and $\sigma$ are Lipschitz continuous in $(t,x,\mu)$ (see \cite[Proposition 2.1]{liu2020functional} or further Proposition \ref{propeuler} for more details).  However, contrary to the Euler scheme for a standard diffusion $dX_t=b(t, X_t)dt+\sigma(t, X_t)dB_t$, the scheme defined by \eqref{Ceq} is not directly implementable, since it does not directly indicate how to simulate $\bar{\mu}^{M}_{t_{m}}$ in the coefficient functions $b$ and $\sigma$. To do this, we need a further spatial discretization on $\RD$, a key point of this paper, to construct a discrete approximation of $\bar{\mu}_{t_{m}}^{M}$. This article will primarily focus on analyzing the errors associated with the spatial discretization approach.

\smallskip
\noindent {\sc (2) Spatial discretization} 
\smallskip

In this paper, we present two distinct spatial discretization approaches: the particle method and the quantization-based method. 

\smallskip

\noindent {\sc (2.1) Particle method}   

\smallskip
The particle method is inspired by the \textit{propagation of chaos} property of the McKean-Vlasov equation (see e.g. \cite{sznitman1991topics}, \cite{gartner1988onthe}, \cite{lacker2018mean} and \cite{chassagneux2019weak}). We consider the same temporal discretization number $M$ and the same time step $h$ as in $\eqref{Ceq}$. For the simplicity of notation, we will omit the superscript $M$ in the following discussion. Let $\bar{X}_{0}^{1, N}, ..., \bar{X}_{0}^{N, N}$ be i.i.d copies of $X_{0}$ in $\eqref{Aeq}$ and let $B^{n}\coloneqq (B_{t}^{n})_{t\in[0, T]}, \;1\leq n\leq N,$ be $N$ independent standard Brownian motions valued in $\RR^q$, independent of the Brownian motion $(B_t)_{t\in[0,T]}$ in the initial McKean-Vlasov equation \eqref{Aeq} and of $(X_0, \bar{X}_{0}^{1,N}, ..., \bar{X}_{0}^{N,N})$. The main idea of the  \textit{particle method} is to construct an $N$-particle system $(\bar{X}^{1, N}_{t_{m}}, ..., \bar{X}^{N, N}_{t_{m}})_{0\leq m\leq M}$ by computing for every $1\leq n\leq N, 0\leq m\leq M-1,$
\begin{align}\label{Deq}
\begin{cases}
\:\bar{X}^{n, N}_{t_{m+1}}=\bar{X}^{n, N}_{t_{m}}+h b(t_{m}, \bar{X}^{n, N}_{t_{m}},\bar{\mu}^{N}_{t_{m}})+\sqrt{h\,}\sigma(t_{m}, \bar{X}^{n, N}_{t_{m}}, \bar{\mu}^{N}_{t_{m}}) Z_{m+1}^{n},\\
\:\bar{\mu}^{N}_{t_{m}}\coloneqq \frac{1}{N}\sum_{n=1}^{N}\delta_{\bar{X}^{n, N}_{t_{m}}},\: Z_{m+1}^{n}=\frac{1}{\sqrt{h}}(B^{n}_{t_{m+1}}-B^{n}_{t_{m}})\widesim{\,\text{i.i.d}\,} \mathcal{N}(0, \mathbf{I}_{q}), 
\end{cases} \end{align}
and then to use $\bar{\mu}^{N}_{t_{m}}$ defined in \eqref{Deq} as an estimator of $\bar{\mu}_{t_{m}}$ in \eqref{Ceq} at each time step $t_m$, $1\leq m\leq M$. Remark that $\bar{\mu}^{N}_{t_{m}}$ in \eqref{Deq} is a random probability distribution, that is, it depends on $\omega\in \Omega$. 
  As detailed later in Theorem \ref{thm1}, we prove that the convergence rate with respect to the $L^p-$norm of  the Wasserstein distance, between $\bar{\mu}^{N}_{t_{m}}$ in \eqref{Deq} and $\bar{\mu}_{t_{m}}$ in \eqref{Ceq} is of order $N^{-1/d}$ ($N^{-1/2p}$ if $d=1$) under appropriate conditions. The result is established by using the convergence rate in the Wasserstein distance of the empirical measure of an i.i.d. sample as presented in \cite[Theorem 1]{fournier2015rate}.

In one-dimensional setting, the convergence rate of the distribution function and of the density function of the particle method has been previously established in \cite{bossy1997stochastic}. In this paper, we establish the convergence rate with respect to the Wasserstein distance, which does not rely on distribution function representations and also holds for a higher dimensional setting ($d\geq2$). Assuming the same conditions as in \cite{bossy1997stochastic}, the convergence rate presented in Theorem \ref{thm1}, particularly when considering the $L^{1}$-Wasserstein distance, aligns with the convergence rate of the distribution function in \cite{bossy1997stochastic} (see \cite{gibbs2002choosing} for the expression of the $L^{1}$-Wasserstein distance in terms of the distribution function). Note that, the convergence analysis in this paper requires fewer conditions, as some of the conditions in \cite{bossy1997stochastic} are necessary only to guarantee the existence of a density function.

\smallskip
\noindent {\sc (2.2) Quantization-based method}

\smallskip
A second way to implement the spatial discretization is to use the (optimal) \textit{vector quantization}, also known as \textit{$K$-means clustering} in unsupervised learning. Consider a random variable $X$ having probability distribution $\mu=\mathcal{L}(X)\coloneqq \mathbb{P}\circ X^{-1}$ on $\RD$. Let $K\in\mathbb{N}^*$ be the quantization level. The main idea of the vector quantization is to project $\mu$ onto a fixed quantizer $x=(x_{1}, ..., x_{K})\in (\RD)^{K}$ satisfying $x_i\neq x_j$ if $i\neq j$, and to use the following discrete projection \begin{equation}\label{quandistribution}
\widehat{\mu}^{\,x}\coloneqq \!\sum_{k=1}^{K}w_k\cdot \delta_{x_{k}}\qquad \text{with}\quad w_k=\mu\big(V_{k}(x)\big), \;1\leq k\leq K\end{equation} as an approximation of $\mu$.  In \eqref{quandistribution}, $\big(V_{k}(x)\big)_{1\leq k\leq K}$ denotes a Vorono\"i partition generated by $x=(x_{1}, ..., x_{K})$ (see Figure \ref{fig:voronoi}), which is a Borel partition on $\RD$ satisfying 
\begin{equation}\label{defvoi}
V_{k}(x)\subset \Big\{ y\in \RD\;\big|\; |y-x_k|=\min_{1\leq j\leq K} |y-x_j|\Big\}, \quad  \,1\leq k \leq K.
\end{equation}
The approximation $\widehat{\mu}^{\,x}$ defined in \eqref{quandistribution} is deterministic. By introducing the  projection function $\Proj_{x}$
\begin{equation}\label{defprojfun}
\xi\in\RD\longmapsto\mathrm{Proj}_{x}(\xi)\!\coloneqq\! \sum_{k=1}^{K}x_{k}\mathbbm{1}_{V_{k}(x)}(\xi)\in\{x_1, ..., x_K\}
\end{equation} 
based on the chosen Vorono\"i partition $\big(V_{k}(x)\big)_{1\leq k\leq K}$(\footnote{The Vorono\"i partition $\big(V_{k}(x)\big)_{1\leq k\leq K}$ generated by a fixed quantizer $x=(x_{1}, ..., x_{K})$ is not unique since we can place  points on the hyperplane $H_{i,j}\coloneqq\{\xi\in\RD\;|\; |x_i-\xi|=|x_j-\xi|\}$ in either the Vorono\"i cell $V_i(x)$ or $V_j(x)$. However, the choice of the Vorono\"i partition has no impact on the quantization error of the quantizer $x$ (see further discussion in \eqref{def:quanti-error-function} and in \eqref{sumvoi}).}), one can remark that $\widehat{\mu}^{\,x}=\mu\circ \Proj_{x}^{-1}$, which is the law of the discrete random variable $\widehat{X}^{\,x}\coloneqq \Proj_{x}(X)$.

If $\mu\in\mathcal{P}_{p}(\RD),\; p\geq1$, there exists (at least) an \textit{optimal} quantizer $x^{*}=(x^{*}_{1}, ..., x^{*}_K)\in (\RD)^{K}$ in the sense that $\widehat{\mu}^{\,x^{*}}$ is the closest probability measure to $\mu$ with respect to the Wasserstein distance $\mathcal{W}_p$, among all probability distributions having a support of at most $K$ points (see \cite[Lemma 3.4]{graf2000foundations} and further Proposition \ref{propclassical}-(3)). In the quadratic setting ($p=2$), such optimal quantizer $x^{*}$ can be found by several numerical methods such as  Lloyd's fixed point algorithm, as described in Algorithm \ref{lloyd} (see \cite{Lloyd1982least}, \cite{kieffer1982exponential}, \cite{pages2016pointwise} for more details) or the CLVQ algorithm (see e.g. \cite[Section 3.2]{pages2015introduction}). In this paper, we implement  Lloyd's algorithm to obtain the quadratic optimal  quantizer in the simulations but numerical results can be obtained by other methods as well.  Figure \ref{fig:n01optimal} is an illustration of the quadratic optimal quantization at level $K=60$ for the standard normal distribution $\mu=\mathcal{N}(0, \mathbf{I}_2)$, where the blue points represent an optimal quantizer and the colour in red  represents the weight $\mu(V_k(x))$ of each Vorono\"i cell  (the darker the heavier). Furthermore, in the framework of unsupervised learning with unlabeled dataset $\{\xi_1, ..., \xi_n\}\subset \RD$, the $K$-means clustering is to compute the (quadratic) optimal quantizer of the empirical measure $\nu=\frac{1}{n}\sum_{i=1}^{n}\delta_{\xi_i}$ on the dataset. 

In this paper, we propose the following two quantization-based schemes for the simulation of the McKean-Vlasov equation (\footnote{This idea of applying optimal quantization to simulate the McKean-Vlasov equation was firstly introduced in \cite{gobet2005discretization}[Section 4] in a different framework. Besides, another quantization-based scheme is proposed in \cite[Section 7.4]{liuthese}, called \textit{doubly quantized scheme}, in which we implement the quantized Gaussian random variables instead of $(Z_m)_{1\leq m \leq M}$ in \eqref{Ceq}. }): 

\begin{center}
\begin{minipage}{0.8\textwidth}
\begin{enumerate}
\item[$(a)$] Recursive quantization scheme in the Vlasov setting \eqref{vlasovcase}, 
\vspace{0.1cm}
\item[$(b)$] Hybrid particle-quantization scheme. 
\end{enumerate} 
\end{minipage}
\end{center}
\vspace{0.1cm}
Different from the particle method, the recursive quantization scheme is deterministic, that is, the simulation result does not depend on $\omega\in\Omega$. 

\vspace{-0.5cm}
\begin{figure}[H]
\centering
\begin{minipage}[t]{0.45\textwidth}
\centering
\includegraphics[height=6cm,width=6cm]{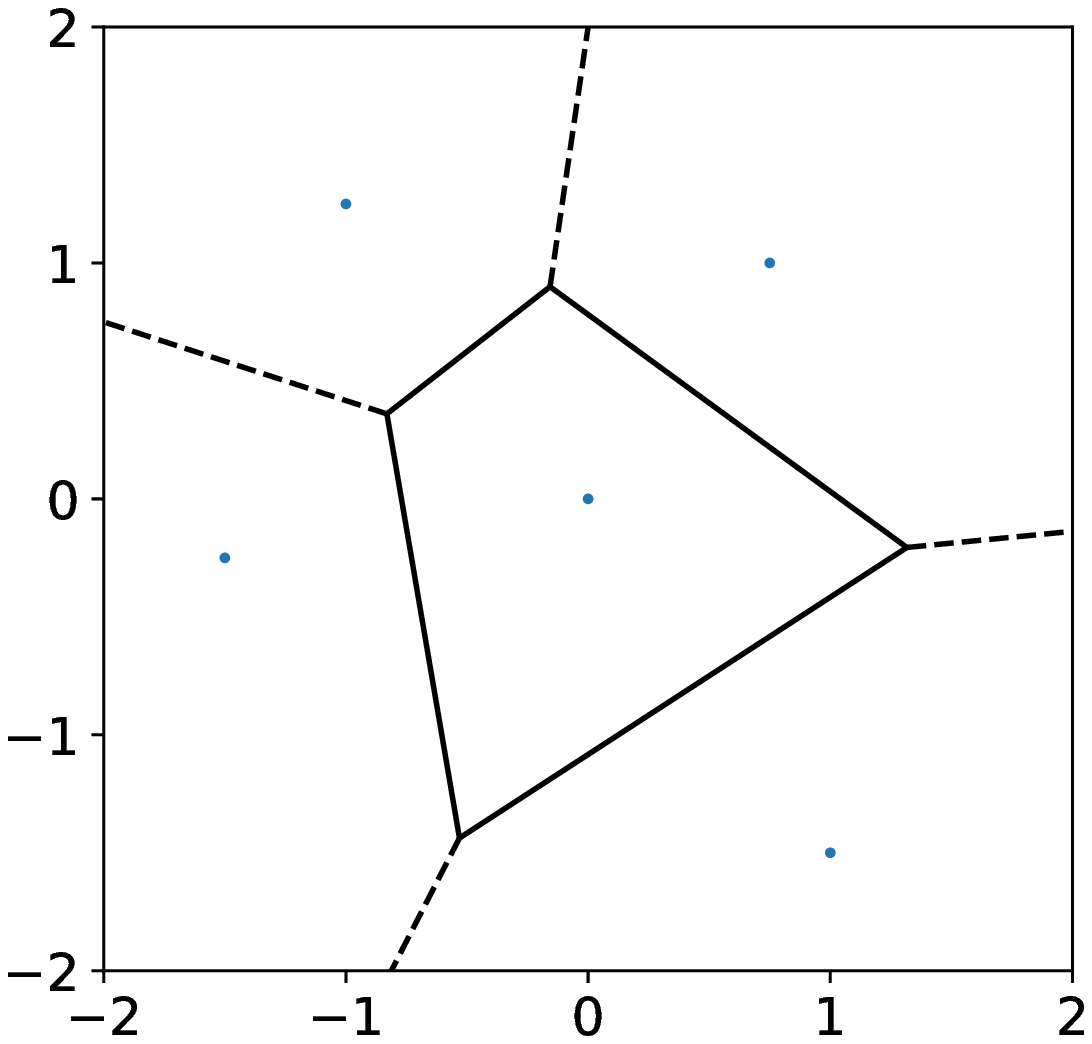}
\vspace{-0.5cm}
\caption{\small A Vorono\"i partition.\\\,}\label{fig:voronoi}
\end{minipage}
\begin{minipage}[t]{0.45\textwidth}
\centering
\includegraphics[height=6cm ,width=6cm]{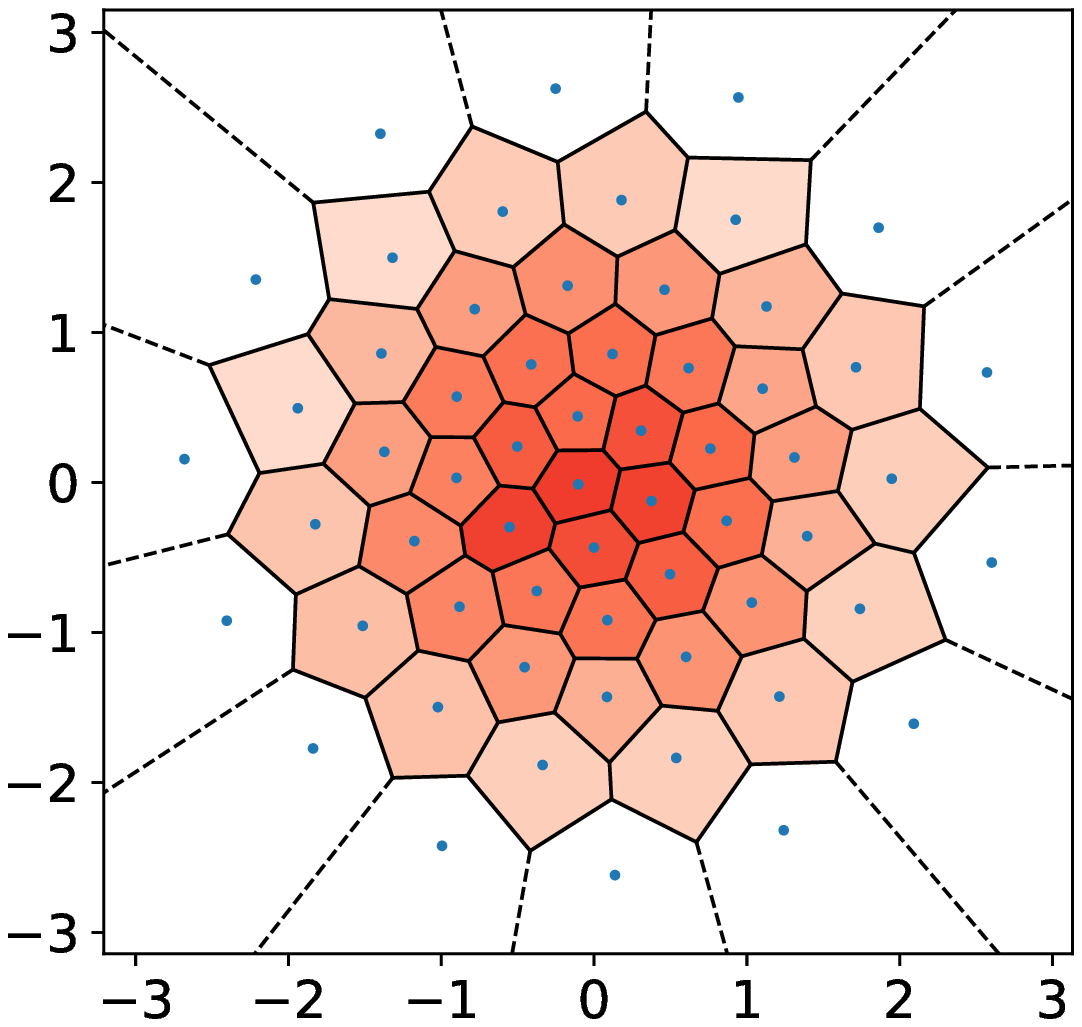}
\vspace{-0.5cm}
\caption{\small The quadratic optimal quantization for the standard normal distribution $\mathcal{N}(0, \mathbf{I}_2)$.}\label{fig:n01optimal}
\end{minipage}
\end{figure}

\vspace{-0.1cm}

\SetKwInOut{Input}{Input}
\SetKwRepeat{Repeat}{Repeat}{until}
\SetKw{KwRet}{Return}
\begin{algorithm}[H]
\caption{Lloyd's  (fixed point) algorithm for  a probability measure $\mu\in\mathcal{P}_{2}(\RD)$}\label{lloyd}

\smallskip

Set $K\in\mathbb{N}^{*}$ for the quantization level. 

\smallskip
\Input{ Initial quantizer $x^{[0]}= (x_{1}^{[0]}, ..., x_{K}^{[0]})$ such that $ x_{k}^{[0]}\in \text{supp}(\mu),\: 1\leq k\leq K$.
}

\Repeat{\quad {\em Some stopping criterion occurs}
(for example, $| \,x^{[l+1]}-x^{[l]} \,|<\varepsilon$ for a fixed threshold $\varepsilon>0$)}{\hspace{0.65cm} Compute  $x^{\,[l+1]}=\EE \big[\,X\;\big|\;\widehat{X}^{\;x^{[l]}}\,\big](\Omega) $, i.e. for every $k=1, ..., K$,

\begin{minipage}{0.94\textwidth}
\begin{equation}\label{lloyditeration}
 x^{[l+1]}_{k}=\begin{cases}
x^{[l]}_{k},\hspace{5.65cm} \text{ if } \mu \big(V_{k}(x^{[l]})\big)=0,\\ \\
\EE \big[\;X\;\big|\;\widehat{X}^{\;x^{[l]}}=x_{k}^{[l]}\;\big] =\frac{\int_{V_{k}(x^{[l]})}\xi\mu(d\xi)}{\mu \big(V_{k}(x^{[l]})\big)}, \hspace{0.6cm}\text{otherwise.}\end{cases} 
\end{equation}
\end{minipage}
}

\KwRet{ $x^{[l]}= (\,x_{1}^{[l]}, \,...,\, x_{K}^{[l]}\,)$ }

\smallskip
\end{algorithm}
\smallskip

\vspace{0.2cm}

\noindent{$(a)$ \sc Recursive quantization scheme.} The most natural idea of applying optimal quantization to the simulation of the McKean-Vlasov equation is to replace $\bar{X}_{t_{m}}$ and $\bar{\mu}_{t_m}$ in \eqref{Ceq} by their respective quantized projection. More precisely, we consider a quantizer sequence for each time step $t_m$ 
\begin{equation}\label{quantizerseq}
x^{(m)}=(x^{(m)}_{1}, ..., x^{(m)}_{K})\in (\mathbb{R}^{d})^{K}, \quad 0\leq m\leq M
\end{equation} 
and  define  the quantized random variable $\widehat{X}_{t_m}$ and its probability distribution $\widehat{\mu}_{t_m}$ by 
\begin{align}\label{Eeq}
\begin{cases}
\widetilde{X}_{0}=X_{0}, \;\;\widehat{X}_{0}=\text{Proj}_{x^{(0)}}(\widetilde{X}_{0}), \;\;\widehat{\mu}_{0}=\mathbb{P}\circ\,\widehat{X}_{0}^{-1},\\
\widetilde{X}_{t_{m+1}}=\widehat{X}_{t_{m}}+h\cdot b(t_m, \widehat{X}_{t_{m}},\widehat{\mu}_{t_{m}})+\sqrt{h\,}\sigma(t_m,\widehat{X}_{t_{m}}, \widehat{\mu}_{t_{m}}) Z_{m+1},\\
\widehat{X}_{t_{m+1}}=\mathrm{Proj}_{x^{(m+1)}}(\widetilde{X}_{t_{m+1}}), \quad \widehat{\mu}_{t_{m+1}}= \mathbb{P}\circ{\,\widehat{X}_{t_{m+1}}^{-1}},\;\;m=0, ..., M-1.
\end{cases}
\end{align}
In \eqref{Eeq}, at each time step $t_m$, we add a quantization procedure for $\widetilde{X}_{t_m}$, $\widetilde{\mu}_{t_{m}}=\mathcal{L}(\widetilde{X}_{t_m})$ and inject their quantization projection $\widehat{X}_{t_m}=\Proj_{x^{(m)}}(\widetilde{X}_{t_m})$ as well as $\widehat{\mu}_{t_m}=\mathcal{L}(\widehat{X}_{t_m})=\widetilde{\mu}_{t_{m}}\circ \Proj_{x^{(m)}}^{-1}$ into the time step $t_{m+1}$.  Later in Theorem \ref{thm:quadbasedscheme}, we establish an upper bound  on the error of this approximation \eqref{Eeq} in terms of the $L^2$-distance between the quantization projection  $\widehat{X}_{t_m}$ and the random variable $\bar{X}_{t_m}$ defined by the Euler scheme \eqref{Ceq}. This error depends on the quantization error at each time step $t_m$, which,  in turn, is influenced by the choice of quantizers $x^{(m)}=(x^{(m)}_{1}, ..., x^{(m)}_{K}), \; 0\leq m\leq M$. The upper bound presented in Theorem \ref{thm:quadbasedscheme} entails the accumulation of quantization errors over the time discretization, indicating potential sub-optimality and opening possibilities for future research and refinement.

This scheme \eqref{Eeq} is not directly implementable because of $\widehat{\mu}_{t_{m}}$ for the same reason as the theoretical Euler scheme \eqref{Ceq}, so we call \eqref{Eeq} \textit{theoretical quantization-based scheme}. However, in the Vlasov setting (\ref{vlasovcase}), we can circumvent this issue by using the recursive quantization method, which is first introduced in \cite{pages2015recursive} for the diffusion equation $dX_{t}=b(t, X_{t})dt + \sigma(t, X_{t})dB_{t}$. 

The main idea of the recursive quantization method is to construct  
the Markovian transitions of $(\widehat{X}_{t_{m}}, \widehat{\mu}_{t_{m}})$ based on \eqref{Eeq}. To be more specific, for each time step $t_m$, by considering Vorono\"i partitions $(V_{k}(x^{(m)}))_{1\leq k\leq K}$ generated by $x^{(m)}$, $0\leq m\leq M,$ and by applying \eqref{quandistribution}, we can rewrite $\widehat{\mu}_{t_m}$ in \eqref{Eeq}  as follow
\begin{equation}\label{muhatdef}
\widehat{\mu}_{t_{m}}=\sum_{k=1}^{K}p_{k}^{(m)}\delta_{x_{k}^{(m)}},
\end{equation} 
where $p_{k}^{(m)}=\widetilde{\mu}_{t_{m}}\big(V_{k}(x^{(m)})\big)=\PP(\widetilde{X}_{t_m}\in V_{k}(x^{(m)}))=\PP(\widehat{X}_{t_m}= x^{(m)}_k), \: 1\leq k\leq K$. 
The expression \eqref{muhatdef} shows that for every $m\in\{0, ..., M\}$,  the quantized distribution $\widehat{\mu}_{t_{m}}$ is determined by the quantizer $x^{(m)}=(x^{(m)}_{1}, ..., x^{(m)}_{K})$ and its weight vector $p^{(m)}=(p^{(m)}_{1}, ..., p^{(m)}_{K})$. In the Vlasov setting \eqref{vlasovcase}, the transition step of \eqref{Eeq}, with respect to $\widehat{\mu}_{t_{m}}$ in \eqref{muhatdef}, writes 
\[\widetilde{X}_{t_{m+1}}=\widehat{X}_{t_{m}}+h\sum_{k=1}^{K}p_{k}^{(m)}\,\beta(t_m, \widehat{X}_{t_{m}},x_k^{(m)})+\sqrt{h\,}\sum_{k=1}^{K}p_{k}^{(m)}\,a(t_m,\widehat{X}_{t_{m}}, x_k^{(m)}) Z_{m+1}.\]
Consequently, under the condition that the weight vector $p^{(m)}$ is known, and given the value of $\widehat{X}_{t_{m}}$, the transition probability is 
\begin{align}\label{Geq}
 \mathbb{P}& \Big(\widehat{X}_{t_{m+1}}=x_{j}^{(m+1)}\mid \widehat{X}_{t_{m}}=x_{i}^{(m)}\Big)\\
&=\mathbb{P}\Big[\Big(x_{i}^{(m)}+ h\sum_{k=1}^{K}p_{k}^{(m)}\beta(t_m, x_{i}^{(m)}, x_{k}^{(m)})+\sqrt{ h}\sum_{k=1}^{K}p_{k}^{(m)}a(t_m, x_{i}^{(m)}, x_{k}^{(m)})Z_{m+1}\Big)\in V_{j}(x^{(m+1)})\Big],\nonumber
\end{align}
 so that we can compute $p^{(m+1)}_j =\mathbb{P}(\widehat{X}_{t_{m+1}}=x^{(m+1)}_j),\,1\leq j\leq K,$ by 
\begin{align}\label{Geq2}
p_{j}^{(m+1)}& =\sum_{i=1}^{K}\mathbb{P}\big(\widehat{X}_{t_{m+1}}=x_{j}^{(m+1)}\mid \widehat{X}_{t_{m}}=x_{i}^{(m)}\;\big)\cdot p_{i}^{(m)}.
\end{align}
A detailed proof of the above equalities is provided in Section \ref{recurq}, where we also explain how to combine this recursive quantization method with  Lloyd's algorithm to optimally compute the quantizer $x^{(m)}$ at each time step. 

\smallskip
\noindent{$(b)$ \sc Hybrid particle-quantization scheme.} Another way to simulate the McKean-Vlasov equation is to apply the optimal quantization method to the particle system \eqref{Deq} and subsequently devise the following \textit{hybrid particle-quantization scheme}. Consider the same initial random variables $\bar{X}^{1, N}_0, ..., \bar{X}^{N, N}_0$ and the same Brownian motions $B^{n}, \,1\leq n\leq N$ as \eqref{Deq} and  define
\begin{align}\label{Feq}
\begin{cases}
\widetilde{X}^{n, N}_{t_{m+1}}=\widetilde{X}^{n, N}_{t_{m}}+ h \cdot b(t_m, \widetilde{X}^{n, N}_{t_{m}},\widehat{\mu}^{K}_{t_{m}})+\sqrt{h\,} \sigma(t_m, \widetilde{X}^{n, N}_{t_{m}}, \widehat{\mu}^{K}_{t_{m}})Z_{m+1}^{n},\: 1\leq n \leq N,\\
\widehat{\mu}^{K}_{t_{m}}=\big(\frac{1}{N}\sum_{n=1}^{N}\delta_{\widetilde{X}^{n, N}_{t_{m}}}\big)\circ \Proj_{x^{(m)}}^{-1}=\sum_{k=1}^{K} \big[\delta_{x_{k}^{(m)}}\cdot\sum_{n=1}^{N}\mathbbm{1}_{V_{k}(x^{(m)})}(\widetilde{X}_{t_{m}}^{n, N})\big], \\ 
\widetilde{X}^{n, N}_{t_{0}}=\bar{X}^{n, N}_{0}\widesim{\text{i.i.d}}X_{0},\;\;Z_{m}^{n}=\frac{1}{\sqrt{h}}(B^{n}_{t_{m+1}}-B^{n}_{t_m})\widesim{\text{i.i.d}}\mathcal{N}(0, \mathbf{I}_{q}).
\end{cases}
\end{align}
The concept behind constructing this scheme is as follows. Considering that the measure argument $\widehat{\mu}_{t_{m}}$ in \eqref{Eeq} cannot be directly simulated except for the Vlasov setting \eqref{vlasovcase}, an alternative strategy is injecting the quantization projection $\widehat{\mu}_{t_m}^{K}$ of 
the empirical measure $\frac{1}{N}\sum_{i=1}^{N}\delta_{\widetilde{X}_{t_m}^{n, N}}$ at time $t_m$ 
 into the simulation for $\widetilde{X}^{n,N}_{t_{m+1}}$ at time $t_{m+1}$, instead of using $\widehat{\mu}_{t_{m}}$. 

The theoretical foundation for this idea  comes from the consistency of the optimal quantizer as established in \cite{liu2018convergence}. Specifically, if a sequence of probability distributions $(\mu_n)_{n\geq1}$ converges to $\mu_\infty$ with respect to the Wasserstein distance,  any limiting point of the optimal quantizers $x^{(n)}$ of $\mu_n$ is an optimal quantizer of $\mu_\infty$.  Considering the convergence of the particle method obtained in Theorem \ref{thm1}, we can anticipate that an optimal quantizer of the empirical measure $\frac{1}{N}\sum_{n=1}^{N}\delta_{\widetilde{X}^{n, N}_{t_{m}}}$  is close to being optimal for the measure $\bar{\mu}_{t_m}$ of \eqref{Ceq}. Following this idea, Proposition \ref{quanNparti}, detailed in Section \ref{FtoD},   provides an error analyse of the scheme \eqref{Feq} in terms of an upper bound of the Wasserstein distance between the quantization projection  $\widehat{\mu}^K_{t_m}$ and the probability distribution $\bar{\mu}^{N}_{t_m}$ defined by the particle method  \eqref{Deq}. Moreover,  from a practical perspective, obtaining an optimal quantizer for the empirical measure $\frac{1}{N}\sum_{n=1}^{N}\delta_{\widetilde{X}^{n, N}_{t_{m}}}$ is more straightforward in  simulations by using e.g. \texttt{sklearn.cluster.KMeans} package in \texttt{Python}, as demonstrated in the pseudo-code in Algorithm \ref{hybridcode2}.

\subsection{Comparison of  spatial discretization approaches and future outlook} 

We now provide  concise comments on the numerical performance and characteristics of these three spatial discretization approaches. Additional details can be found in Section \ref{example}.

First, regarding computing time, both the particle method \eqref{Deq} and the recursive quantization scheme \eqref{Eeq}-\eqref{Geq}-\eqref{Geq2} (excluding Lloyd iteration) essentially involve the computation of a Markov chain in $\RR^{N}$ and $\RR^{K}$ respectively.  Consequently, these are theoretically the two fastest methods. However, the recursive quantization scheme requires rapid computation for terms such as $p_{k}^{(m)}=\widetilde{\mu}_{t_{m}}\big(V_{k}(x^{(m)})\big)$, which is a numerical integration over a Vorono\"i cell. This process becomes numerically costly in high-dimensional settings, particularly when utilizing packages like \texttt{qhull} (see  \texttt{www.qhull.com}). Furthermore, the accuracy of the quantization method depends on the choice of the quantizer, while integrating the Lloyd's algorithm into the quantization scheme to obtain an optimal quantizer adds to the computing time. In some cases, particularly in one-dimensional setting, adding the Lloyd's algorithm proves to be more numerically costly without significant improvement on the accuracy, compared to a simpler strategy of increasing the quantizer size $K$ and using a uniformly spaced quantizer.

In terms of accuracy, both the particle method \eqref{Deq} and the hybrid particle-quantization scheme \eqref{Feq} are classified as \textit{random} algorithms,  indicating that  their simulation results, depend on $\omega$ in the probability space $(\Omega, \mathcal{F}, \mathbb{P})$. Therefore, when evaluating the accuracy of these two methods, one needs to compute the standard deviation of errors simultaneously. On the other hand, the recursive quantization scheme \eqref{Eeq}-\eqref{Geq}-\eqref{Geq2} is deterministic, but this scheme is currently adapted only to the Vlasov case \eqref{vlasovcase}. 

Moreover,  in many research areas and applications associated to the McKean-Vlasov equation, such as mean field games (see e.g. \cite{MR3752669}) and opinion dynamics (see e.g. \cite{hegselmann2002opinion}), the primary focus is on an $N$-particle system $(X_t^{1, N}, ..., X_{t}^{N,N})_{t\in[0,T]}$ satisfying $(X_0^{1, N}, ..., X_{0}^{N,N})\widesim{\,\text{i.i.d.}\,}X_0$ and following the dynamics given by
\begin{align}\label{Beq}
\begin{cases}
dX_{t}^{n,N}=b(t, X_{t}^{n,N}, \mu_{t}^{N})dt+\sigma(t, X_{t}^{n,N}, \mu_{t}^{N})dB_t^n, \quad 1\leq n\leq N,\\
\forall \, t\in[0,T], \quad \mu_t^{N}=\frac{1}{N}\sum_{n=1}^{N}\delta_{X_t^{n, N}}.
\end{cases}
\end{align}
The McKean-Vlasov equation \eqref{Aeq} is considered as a limit equation of such particle system \eqref{Beq} when $N\rightarrow +\infty$, often appeared in the \textit{propagation of chaos} theory in the literature. The particle method \eqref{Deq} presented in this paper can be viewed as a temporal discretization of \eqref{Beq}. Meanwhile, the hybrid particle-quantization scheme \eqref{Feq} essentially incorporates a $K$-means clustering step into the particle method. From the perspective of unsupervised learning, considering the particles in \eqref{Deq} as a dataset, this additional step is inherently heuristic for modeling, given that $K$-means clustering is commonly employed for {identifying salient features}, {natural classification} and {data compression} in unsupervised learning (see e.g. \cite{jain2010data}). In Section \ref{simu2section}, we show through numerical experiments that the hybrid particle-quantization scheme reduces  data volume while maintaining  stability in both the mean and variance of the test function value.

In considering future directions, advancements in optimal quantization theory and numerical methods will  enhance the outcomes of this paper. First, the development of faster computational algorithms for numerical integrations over Vorono\"i cells in high-dimensional settings will expand the practical applicability of the recursive quantization scheme. Additionally, the current upper bounds on simulation errors for the quantization-based schemes in Theorem \ref{thm:quadbasedscheme} and Proposition \ref{quanNparti} are not optimal. These bounds represent the accumulation of quantization errors over the time discretization, lacking constraints when the time discretization number $M$ tends to infinity. However, such divergence was not observed in the simulation examples in Section \ref{example}, even with a large value of $M$. Therefore, investigating alternative error bounds and determining conditions for convergence will be another direction for the future research. 

Furthermore, since the convergence results in Theorem \ref{thm1} and Proposition \ref{quanNparti} are established with respect to the Wasserstein distance, developing precise and efficient computation methods for the Wasserstein distance is crucial when practically evaluating the convergence, especially in high-dimensional settings. Finally, the simulation example in Section \ref{simu2section} involves the density estimation of the solution to the McKean-Vlasov equation. For a more in-depth understanding of its dependency on the choice of kernel and bandwidth in the density estimation procedure, we recommend consulting \cite{MR1910635} and \cite{Hoffmann_2022}.

\subsection{Organization of this paper}

This paper is organised as follows.

Section \ref{notaandassu} gathers the notations used in this paper and the assumptions for the error analyses of the three numerical schemes.  

In Section \ref{DtoC}, we discuss the particle method and prove its convergence rate in Theorem \ref{thm1}. The  proof shares the same idea of the well-known \textit{propagation of chaos} (see e.g. \cite[Theorem 3.3]{lacker2018mean}). We  compare the particle system \eqref{Deq} with another particle system \textit{without} interaction, that is, a system composed by $N$ i.i.d. It\^o processes $(Y^{1}_{t}, ..., Y^{N}_{t})_{t\in[0,T]}$ simulated by the continuous Euler scheme \eqref{contEuler}. Then the error of the particle method can be bounded by, up to a constant multiplier, the Wasserstein distance between the probability measure of $\bar{X}$ defined by \eqref{contEuler} and its empirical measure defined on its i.i.d. copies $(Y^{1}, ..., Y^{N})$.

Section \ref{qscheme} is devoted to the quantization-based schemes \eqref{Eeq}, \eqref{Geq}   and \eqref{Feq}, along with their respective error analyses. In Subsection \ref{reviewoq},  we first give a review of the optimal quantization method. Then we present and prove Theorem \ref{thm:quadbasedscheme}, the $L^{2}$-error analysis of the theoretical  quantization-based scheme \eqref{Eeq}. In Subsection \ref{recurq}, we show how to obtain the transition probability \eqref{Geq} for the recursive quantization scheme in the Vlasov setting, as well as the integration of Lloyd's algorithm.  Finally, Subsection \ref{FtoD} contains Proposition \ref{quanNparti}, the error analysis of the hybrid particle-quantization scheme \eqref{Feq}, and its proof. 

Section \ref{example} illustrates two simulation examples by using  the above numerical methods. The first simulation is for a one-dimensional Burgers equation, introduced in \cite{sznitman1991topics} and \cite{bossy1997stochastic}. This  equation has a closed-form solution, so  we can compute and compare the accuracy of different methods. The second example is a 3-dimensional model, the network of FitzHugh-Nagumo neurons, firstly introduced and simulated in \cite{MR2974499} (see also some corrections in \cite{bossy2015clarification}) and also simulated in \cite{reis2018simulation}. 

Finally, for the reader's convenience,  Figure \ref{stru} in Appendix \ref{appa} displays all the numerical methods presented in this paper along with their connections, where we also briefly mention their convergence rates. Additionally, Appendix \ref{appa} includes the pseudo-codes for the two quantization-based schemes introduced in this paper.

\section{Notations and assumptions}\label{notaandassu}

\subsection{Notations and definitions}

In the whole paper, we place ourselves in a filtered probability space $(\Omega, \mathcal{F}, (\mathcal{F}_{t})_{t\geq0}, \mathbb{P})$ satisfying the usual conditions and $(B_{t})_{t\geq0}$ denotes an $(\mathcal{F}_{t})-$standard Brownian motion valued in $\mathbb{R}^{q}$. The same holds for $(B_{t}^n)_{t\geq0}$ with $n\in\mathbb{N}^{*}$. We use $\left|\cdot\right|_{d}$ for the Euclidean norm on $\mathbb{R}^{d}$ (we drop the subscript $d$ in $\left|\cdot\right|_{d}$ when there is no ambiguity) and  $\langle \cdot , \cdot \rangle$  for the Euclidean inner product. We write $\mathbb{M}_{d, q}(\mathbb{R})$ for the space of real matrices with $d$ rows and $q$ columns and endow the space $\mathbb{M}_{d, q}(\mathbb{R})$  with an operator norm $\vertiii{\cdot}$ defined by $\vertiii{A}\coloneqq\sup_{\left|z\right|_{q}\leq1}\left|Az\right|_{d\,}$. 

We write $(E, \Vert \cdot \Vert_E)$ for a Banach space $E$ endowed with the norm $ \Vert \cdot \Vert_E$.  For a random variable $X$ valued in $(E, \Vert \cdot \Vert_E)$, we write $P_X$ or $\mathcal{L}(X)$ for its probability distribution $\mathbb{P}\circ X^{-1}$ and write $\Vert X\Vert_p$ for its $L^p$-norm, defined by $\Vert X \Vert_p\coloneqq \big[\EE \Vert X \Vert_E^p\big]^{1/p}$, $p\geq1$.  
 Moreover, we consider the space of $\RD$-valued continuous mappings endowed with the supremum norm, denoted by $\big(\mathcal{C}([0,T], \RD), \Vert \cdot \Vert_{\sup}\big)$, that is, 
\[\mathcal{C}([0,T], \RD)\coloneqq\big\{\alpha\!=\!(\alpha_t)_{t\in[0,T]} \text{ such that $t\in[0,T]\mapsto\alpha_t\in\RD$ is continuous}\big\}\:\: \text{and}\: \:\Vert\alpha\Vert_{\sup}\!\!=\!\!\sup_{t\in[0,T]}|\alpha_t|.\]
The space  $L_{\CRD}^{p}\big(\Omega, \mathcal{F}, (\mathcal{F}_{t})_{t\geq0}, \mathbb{P}\big)$ denotes the $L^p$-space of $(\mathcal{F}_{t})$-adapted random processes, that is,  
\begin{align}\label{LPC}
&L_{\CRD}^{p}\big(\Omega, \mathcal{F}, (\mathcal{F}_{t})_{t\geq0}, \mathbb{P}\big)\coloneqq\Big\{\:Y=(Y_{t})_{t\in[0,T]}: (\Omega, \mathcal{F}, (\mathcal{F}_{t})_{t\geq0}, \mathbb{P})\rightarrow \mathcal{C}\big([0, T], \mathbb{R}^{d}\big)\text{ such that }\nonumber\\
&\hspace{6cm} Y \text{ is } (\mathcal{F}_{t})\text{-adapted and } \left\Vert Y\right\Vert_{p}\coloneqq\big[\mathbb{E}\left\Vert Y\right\Vert_{\sup}^{p}\big]^{1/p}<+\infty\:\Big\}.
\end{align}
The notation $C_{p_1, ..., p_n}$ means a positive constant depending on parameters $p_1, ..., p_n$ whose value is allowed to change from line to line,  $\mathcal{N}(0, \mathbf{I}_q)$ denotes the $\RR^q$ standard normal distribution, where $\mathbf{I}_q$ is the $q\times q$ identity matrix, and $\delta_a$ denotes the Dirac measure at a point $a$. For two random variables $X$ and $Y$, the notation $X\independent Y$ is used to indicate that $X$ is independent of $Y$.

\smallskip

Now we introduce two classes of notations frequently used in this paper.

\smallskip

\noindent {\sc  (1) Wasserstein type notations}

\smallskip

For a Polish space $(S, d_S)$,  we denote the set of probability distributions on $S$ by $\mathcal{P}(S)$ and denote by $\mathcal{P}_{p}(S)$ the set of probability distributions on $S$ having $p$-th finite moment, $p\geq1$.  The definition of the $L^p$-Wasserstein distance $W_p$ on $\mathcal{P}_p(S)$ is 
\begin{align}\label{defwaspo}
\forall\,\mu, \nu\in\mathcal{P}_{p}&(S),\nonumber\\
W_{p}(\mu,\nu)&\coloneqq\Big{(}\inf_{\pi\in\Pi(\mu,\nu)}\int_{S\times S}d_S(x,y)^{p}\pi(dx,dy)\Big{)}^{\frac{1}{p}}\nonumber\\
&=\inf\Big{\{}\big{(}\mathbb{E}\,[d_S(X,Y)^{p}]\big{)}^{\frac{1}{p}}\!\!,\,  X,Y:(\Omega,\mathcal{F},\mathbb{P})\rightarrow( S, \mathcal{S})  \:\text{ with } \:\mathbb{P}\circ X^{-1}=\mu, \mathbb{P}\circ Y^{-1}=\nu\,\Big{\}},
\end{align}
where in (\ref{defwaspo}), $\Pi(\mu,\nu)$ denotes the set of all probability measures on $\big( S\times  S, \mathcal{S}^{\otimes2}\big)$ with marginals $\mu$ and $\nu$, and $\mathcal{S}$ denotes the Borel $\sigma$-algebra on $S$ generated by the distance $d_S$.  We write $\mathcal{W}_p$ for the case $S = \RD$  endowed with the Euclidean norm $\left|\cdot\right|_{d}$ and write $\mathbb{W}_p$
for the case $S=\mathcal{C}([0,T], \RD)$ endowed with the sup norm $\Vert\cdot\Vert_{\sup}$. Inspired by \cite{lacker2018mean}, we also consider the ``truncated Wasserstein distance'' on $\mathcal{P}_{p}\big(\mathcal{C}([0, T], \mathbb{R}^{d})\big)$, defined by
\begin{equation}\label{truncatedwass}
\!\!\forall\, \mu, \nu\in \PPC,\:\mathbb{W}_{p,t}(\mu, \nu)\!\coloneqq\!\Big{[}\inf_{\pi\in\bar{\Pi}(\mu,\nu)}\int_{\mathcal{C}([0, T], \mathbb{R}^{d})\times \mathcal{C}([0, T], \mathbb{R}^{d})}\sup_{s\in[0,t]}\left| x_{s}-y_{s}\right|^{p}\pi(dx,dy)\Big{]}^{\frac{1}{p}}\!\!\!,
\end{equation}
where $\bar{\Pi}(\mu, \nu)$  denotes the set of all probability measures on $\mathcal{C}\big([0, T], \RD\big)\times \mathcal{C}\big([0, T], \RD\big)$ equipped with the Borel $\sigma$-algebra generated by $\Vert\cdot \Vert_{\sup}$.

We also introduce $\mathcal{C}\big( [0, T], \mathcal{P}_{p}(\mathbb{R}^{d})\big)$ as the marginal distributions space, defined by
\begin{align}
&\mathcal{C}\big( [0, T], \mathcal{P}_{p}(\mathbb{R}^{d})\big)\coloneqq\Big\{(\mu_{t})_{t\in[0, T]}\text{ s.t. }t\mapsto\mu_{t} \text{ is a continuous mapping from } [0,T] \text{ to } \big(\mathcal{P}_{p}(\mathbb{R}^{d}), \mathcal{W}_{p}\big) \Big\}\nonumber
\end{align}
equipped with the distance 
\begin{equation}\label{distancedc}
d_{\mathcal{C}}\big((\mu_{t})_{t\in[0, T]}, (\nu_{t})_{t\in[0, T]}\big)\coloneqq\sup_{t\in[0, T]}\mathcal{W}_{p}(\mu_{t}, \nu_{t}). 
\end{equation}

The mapping $\iota$ sends elements from $\PPC$ to $\CPP$ and is defined by 
\begin{equation}\label{defiota}
\mu\mapsto \iota(\mu)=(\mu\circ\pi_{t}^{-1})_{t\in[0, T]}=(\mu_{t})_{t\in[0, T]},
\end{equation}
where for any $t\in[0, T]$,  $\pi_{t}: \CRD\rightarrow\mathbb{R}^{d}$ is defined by $\alpha=(\alpha_t)_{t\in[0,T]}\mapsto\pi_{t}(\alpha)=\alpha_{t}$.

\smallskip
\noindent {\sc  (2) Quantization type notations}

\smallskip

For a given quantization level $K \in \mathbb{N}^*$, we represent a $K$-level quantizer as $x=(x_{1}, ..., x_{K})\in (\RD)^{K}$.  It is generally assumed that the components $x_i$ are distinct, that is, $x_i\neq x_j$ if $i\neq j$. 
Given the quantizer $x = (x_{1}, ..., x_{K})$, the notation  $\big(V_{k}(x)\big)_{1\leq k\leq K}$ represents a Vorono\"i partition generated by $x$, which is a Borel partition on $\RD$ satisfying \vspace{-0.2cm}
\begin{equation}
V_{k}(x)\subset \Big\{ y\in \RD\;\big|\; |y-x_k|=\min_{1\leq j\leq K} |y-x_j|\Big\}, \quad  \,1\leq k \leq K.\vspace{-0.2cm}
\nonumber
\end{equation}
and the notation  $\Proj_{x}$ is  the projection function based on a Vorono\"i partition $\big(V_{k}(x)\big)_{1\leq k\leq K}$, defined by 
\vspace{-0.2cm}
\begin{equation}
\xi\in\RD\:\longmapsto\:\mathrm{Proj}_{x}(\xi)\!\coloneqq\! \sum_{k=1}^{K}x_{k}\mathbbm{1}_{V_{k}(x)}(\xi)\in\{x_1, ..., x_K\}.\vspace{-0.2cm}
\nonumber
\end{equation} 

Let $X$ be a random variable taking values in $\mathbb{R}^d$ with probability distribution $\mu$ and let $x=(x_{1}, ..., x_{K})\in (\RD)^{K}$ be a $K$-level quantizer. We denote the quantization approximation of $X$ by $\widehat{X}^x\coloneqq \Proj_{x}(X)$ and denote the quantization approximation of $\mu$ by $\widehat{\mu}^{\,x}\coloneqq \!\sum_{k=1}^{K}\mu\big(V_{k}(x)\big)\cdot \delta_{x_{k}}\!=\!\mu\circ \Proj_{x}^{-1}$, which is the law of $\widehat{X}^x$. 
When there is no ambiguity on the quantizer, we drop the superscript $x$ in $\widehat{X}^{\,x}$ and in  $\widehat{\mu}^{\,x}$. 

When $\mu\in\mathcal{P}_p(\RD)$,  the accuracy of the quantization approximation can be assessed through the quantization error function $e_{K, p}(\mu, \cdot)$, defined by 
\begin{equation}\label{def:quanti-error-function}
x=(x_1, ..., x_K)\in (\RD)^{K}\mapsto e_{K, p}(\mu, x)\coloneqq \left[ \int_{\RD}\min_{1\leq k\leq K}|\xi-x_k|^{p}\mu(d\xi)\right]^{\frac{1}{p}}\in\RR_{+}. 
\end{equation}
Sometimes we also use the notation $e_{K, p}(X, \cdot)$ instead of $e_{K, p}(\mu, \cdot)$ for a random variable $X$ such that $P_X=\mu$. The optimal quantizer at level $K$ and order $p$ for $\mu$ is denoted by $x^{*}=(x^{*}_1, ..., x^{*}_K)$, which is defined by 
\begin{equation}\label{defopquantizer}
x^{*}=(x^{*}_1, ..., x^{*}_K)\in\argmin_{y\in(\RD)^K} e_{K, p}(\mu, y),
\end{equation}
and we call
\begin{equation}\label{def:optimalerror}
e_{K,p}^{*}(X)=e_{K,p}^{*}(\mu)\coloneqq\inf_{x\in (\RD)^K}e_{K,p}(\mu, x)
\end{equation}
 the optimal quantization error at level $K$ and order $p$ for $\mu$. Finally, the notation \[\mathcal{M}_{p}(\mu)\coloneqq \inf_{a \in \RR^d} \left[\int_{\RD}|\xi-a|^{p} \mu(d\xi)\right]^{\frac{1}{p}}\] will be repeatedly used in an upper bound for the optimal quantization error.

\subsection{Assumptions}

Throughout this paper, we make the following assumption on the coefficients $b$, $\sigma$ and on the initial random variable $X_0$. Remark that the following assumption depends on an index $p\in[2,+\infty)$. 
\begin{manualtheorem}{I}\label{AssumptionI}
There exists $p\in[2, +\infty)$ such that the following conditions (1) and (2) hold. 
\begin{enumerate}[(1)]
\item The initial random variable $X_0$ satisfies $\vertii{X_{0}}_{p}<+\infty$. 
\smallskip
\item The coefficient functions $b$ and $\sigma$ are $\gamma$-H\"older continuous in $t$, $\gamma \in (0, 1]$, and Lipschitz continuous in $x$ and in $\mu$ in the following sense: there exists a constant $L>0$ such that
\begin{align}
&\forall\, (x,\mu) \in\RR^{d}\times\mathcal{P}_{p}(\RD), \forall\,s, t\in[0,T] \:\text{with}\: s\leq t,\nonumber\\
&\qquad \left|b(t, x, \mu) - b(s, x, \mu)\right| \vee\vertiii{\sigma(t, x, \mu) - \sigma(s, x, \mu)}\leq L\big(1+|x|+\mathcal{W}_{p}(\mu, \delta_0)\big)(t-s)^{\gamma},
\end{align}
and such that 
\begin{align}
&\forall \,t\in[0, T], \forall \,x,y \in\mathbb{R}^{d} \text{ and } \forall\, \mu, \nu\in\mathcal{P}_{p}(\mathbb{R}^{d}),\nonumber\\
&\qquad \left|b(t, x, \mu) - b(t, y, \nu)\right| \vee\vertiii{\sigma(t, x, \mu) - \sigma(t, y, \nu)}\leq L\big(\,|x-y|+\mathcal{W}_{p}(\mu, \nu)\,\big).
\end{align}
\end{enumerate}
\end{manualtheorem}
\smallskip
Assumption \ref{AssumptionI} is a classical assumption for existence and uniqueness of a strong solution $X=(X_{t})_{t\in[0, T]}$ to the McKean-Vlasov equation \eqref{Aeq} (see e.g. \cite{lacker2018mean}, \cite[Chapter 5]{liuthese}) and the convergence of the Euler scheme \eqref{Ceq},~\eqref{contEuler}.  
\begin{prop}(\!\!\cite[Proposition 2.1]{liu2020functional})\label{propeuler} Assume that Assumption \ref{AssumptionI} holds for an index $p\in[2,+\infty)$.
Let $X=(X_t)_{t\in[0,T]}$ be the unique strong solution of \eqref{Aeq} and let $\bar{X}^{M}=(\bar{X}^{M}_{t})_{t\in[0,T]}$ denote the process defined by the continuous Euler scheme \eqref{contEuler}.  Then there exists a constant $C_{p, d, T, L, \gamma, \vertii{X_0}_p}$ such that 
\[\Big\Vert\sup_{t\in[0,T]}\big|X_t-\bar{X}^{M}_{t}\big|\Big\Vert_p\leq C_{p, d, T, L, \gamma, \vertii{X_0}_p} h^{\frac{1}{2}\wedge \gamma}.\]
\end{prop}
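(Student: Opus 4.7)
The plan is to follow the classical SDE-versus-Euler-scheme comparison, with the extra ingredient of coupling to handle the measure argument. For $t\in[0,T]$ write $\underline{t}\coloneqq t_m$ where $m=\lfloor t/h\rfloor$. From \eqref{Aeq} and the continuous Euler scheme \eqref{contEuler}, the difference admits the integral representation
\[
X_t-\bar{X}^M_t=\int_0^t\!\bigl[b(s,X_s,\mu_s)-b(\underline{s},\bar{X}^M_{\underline{s}},\bar{\mu}_{\underline{s}})\bigr]ds+\int_0^t\!\bigl[\sigma(s,X_s,\mu_s)-\sigma(\underline{s},\bar{X}^M_{\underline{s}},\bar{\mu}_{\underline{s}})\bigr]dB_s,
\]
using $\bar{X}_0^M=X_0$. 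I would then split each integrand into a time-regularity piece (comparing arguments $s$ vs.\ $\underline{s}$ with the other coordinates fixed) controlled by the $\gamma$-Hölder part of Assumption \ref{AssumptionI}, and a space/measure piece controlled by the Lipschitz part.

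Next I would set $\phi(t)\coloneqq\bigl\Vert\sup_{u\leq t}|X_u-\bar{X}^M_u|\bigr\Vert_p$ and bound it. For the drift term I apply Minkowski's integral inequality; for the stochastic integral I apply the Burkholder–Davis–Gundy inequality (valid since $p\geq 2$). The key observation in the McKean–Vlasov setting is the coupling bound $\mathcal{W}_p(\mu_s,\bar{\mu}_{\underline{s}})\leq\Vert X_s-\bar{X}^M_{\underline{s}}\Vert_p$, which reduces the measure-Lipschitz contribution to an $L^p$-norm of path differences. A preliminary uniform $L^p$ moment bound $\sup_{t\in[0,T]}\Vert X_t\Vert_p\vee\Vert\bar{X}^M_t\Vert_p<\infty$ (standard under Assumption \ref{AssumptionI}) absorbs the linear-growth factor $(1+|X_s|+\mathcal{W}_p(\mu_s,\delta_0))$ coming from the Hölder condition, yielding a contribution of order $h^{\gamma}$ from the time-shift piece.

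The second technical ingredient is a one-step increment estimate $\bigl\Vert\bar{X}^M_s-\bar{X}^M_{\underline{s}}\bigr\Vert_p\leq C h^{1/2}$, which follows from writing the increment explicitly and again using BDG and the uniform $L^p$ moment bound on $\bar{X}^M$. Combined with the triangle inequality $\Vert X_s-\bar{X}^M_{\underline{s}}\Vert_p\leq\phi(s)+C h^{1/2}$, this is what converts the spatial- and measure-Lipschitz bounds into an inequality of the form
\[
\phi(t)^p\leq C_1\bigl(h^{\gamma}+h^{1/2}\bigr)^p+C_2\int_0^t\phi(s)^p\,ds,
\]
after taking $p$-th powers and using Jensen's inequality on the drift integral.

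Finally Gronwall's lemma yields $\phi(T)\leq C\,h^{\gamma\wedge 1/2}$, which is the desired conclusion. The main difficulty compared to the classical (non-measure-dependent) Euler case is step two: one must ensure that the measure coupling inequality closes the loop, i.e.\ that $\mathcal{W}_p(\mu_s,\bar{\mu}_{\underline{s}})$ can be replaced by a quantity already controlled by $\phi$. Once this coupling step is accepted, the remainder reduces to bookkeeping with BDG, Minkowski, and Gronwall exactly as in the standard diffusion case.
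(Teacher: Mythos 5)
Your proposal is correct and follows essentially the same route as the proof cited from \cite{liu2020functional} (and as the analogous argument the paper runs for Theorem \ref{thm1}): integral representation of the difference, a split into a time-H\"older piece and a space/measure Lipschitz piece, the coupling bound $\mathcal{W}_p(\mu_s,\bar{\mu}_{\underline{s}})\leq\Vert X_s-\bar{X}^M_{\underline{s}}\Vert_p$, a one-step increment estimate of order $h^{1/2}$, uniform $L^p$ moment bounds, the generalized Minkowski and Burkholder--Davis--Gundy inequalities, and Gronwall. The only cosmetic difference is that you close the loop by raising to the $p$-th power and applying Jensen, whereas the paper's toolkit uses the ``\`a la Gronwall'' variant (Lemma \ref{Gronwall}) that handles the mixed $A\int_0^t f+B\big(\int_0^t f^2\big)^{1/2}+\psi(t)$ bound directly at first order; both are valid.
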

 Building upon the conclusion of Proposition \ref{propeuler}, we will focus on the convergence analysis of the three spatial discretization approaches defined in \eqref{Deq}, \eqref{Eeq} and \eqref{Feq}.  Besides,  in the Vlasov case \eqref{vlasovcase},  a sufficient condition for Assumption \ref{AssumptionI} is  Assumption \ref{AssumptionII-V1} (see below), which follows from the Kantorovich-Rubinstein dual representation of the Wasserstein distance $\mathcal{W}_{1}$: 
\[\mathcal{W}_{1}(\mu, \nu)=\sup \Big\{\int_{\RD}f(\xi)\mu(d\xi)-\int_{\RD}f(\xi)\nu(d\xi)\:\Big|\: f \text{ Lipschitz continuous with }[f]_{\text{Lip}}\coloneqq\sup_{x\neq y}\tfrac{|f(x)-f(y)|}{|x-y|}\leq 1\Big\}\] 
and follows from the fact that for every $p\geq 1, \;\mathcal{W}_{1}(\mu, \nu)\leq \mathcal{W}_{p}(\mu, \nu)$ (see  e.g. \cite{EDWARDS2011387} and \cite[Chapter 6]{villani2008optimal}). 
\begin{manualtheorem}{II$_{V}$}\label{AssumptionII-V1}
The functions $\beta$ and $a$ in \eqref{vlasovcase} are $\gamma$-H\"older continuous in $t$, $\gamma\in(0,1]$, Lipschitz continuous in $(x,\,u)$ uniformly with respect to $t\in[0, T]$ in the following sense: there exists a constant $L>0$ such that 
\begin{align}
&\forall\, x,u \in\RD, \forall\,s, t\in[0,T] \:\text{with}\: s\leq t,\nonumber\\
&\qquad \left|\beta(t, x, u) - \beta(s, x, u)\right| \vee\vertiii{a(t, x, u) - a(s, x, u)}\leq L\big(1+|x|+|u|\big)(t-s)^{\gamma},\nonumber
\end{align}
and such that 
\begin{align}
&\forall \,t\in[0, T], \forall \,x_1,\,x_2,\,u_1,\,u_2 \in\mathbb{R}^{d},\nonumber\\ 
&\qquad \left|\beta(t, x_1, u_1) - \beta(t, x_2, u_2)\right| \vee\vertiii{a(t, x_1, u_1) - a(t, x_2, u_2)}\leq L\big(\,|x_1-x_2|+|u_1-u_2|\,\big).\nonumber
\end{align}
\end{manualtheorem}

\section{Particle method and its convergence rate}\label{DtoC}

 The following theorem shows the convergence rate of the particle method \eqref{Deq}.

\begin{thm}\label{thm1} Assume that Assumption \ref{AssumptionI} is in force for an index $p\in[2,+\infty)$.
 Set $M\in\mathbb{N}^*$ and $h=\frac{T}{M}$ for the temporal discretization. 
Let $(\bar{\mu}_{t_m})_{1\leq m\leq M}$, $(\bar{\mu}^{N}_{t_m})_{1\leq m\leq M}$ be probability distributions respectively defined by the theoretical Euler scheme \eqref{Ceq} and the particle method \eqref{Deq}. 
\begin{enumerate}[$(i)$]
\item We have  
\[\left\Vert \sup_{0\leq m\leq M}\mathcal{W}_{p}(\bar{\mu}_{t_{m}}^{N}, \bar{\mu}_{t_{m}})\right\Vert_{p}\leq C_{d, p, L, T}\vertii{\mathbb{W}_{p}(\bar{\mu}, \nu^{N})}_{p}\xrightarrow{\;N\rightarrow+\infty\;}0,\] 
where $\bar{\mu}$ is the probability distribution of the process $\bar{X}=(\bar{X}_{t})_{t\in[0,T]}$ defined  by \eqref{contEuler} and $\nu^{N}\coloneqq\frac{1}{N}\sum_{i=1}^N\delta_{Y^i}$ is the empirical measure on the i.i.d. copies $(Y^{1},..., Y^{N})$ of  $\bar{X}$. 
\smallskip
\item If, in addition, $\vertii{X_{0}}_{p+\varepsilon}<+\infty$ for some $\varepsilon>0$,  we have the following inequality 
\begin{align}
\left\Vert\sup_{0\leq m\leq M}\!\!\mathcal{W}_{p}(\bar{\mu}_{t_{m}}^{N}, \bar{\mu}_{t_{m}})\right\Vert_{p}\!\!\leq \widetilde{C}\times\begin{cases}
N^{-\frac{1}{2p}}+N^{-\frac{\varepsilon}{p(p+\varepsilon)}} \!\!& \mathrm{if}\:p>d/2\:\mathrm{and}\:\varepsilon\neq p,\\
N^{-\frac{1}{2p}}\big[\log(1+N)\big]^{\frac{1}{p}}+N^{-\frac{\varepsilon}{p(p+\varepsilon)}} \!\!& \mathrm{if}\:p=d/2\:\mathrm{and}\:\varepsilon\neq p,\\
N^{-\frac{1}{d}}+N^{-\frac{\varepsilon}{p(p+\varepsilon)}} \!\!& \mathrm{if}\:p\in(0, d/2)\:\mathrm{and}\:p+\varepsilon\neq \frac{d}{(d-p)},
\end{cases}\nonumber
\end{align}
where $\widetilde{C}$ is a constant depending on $p,\varepsilon, d, b, \sigma, L, T$ and $\vertii{X_{0}}_{p+\varepsilon}$.
\end{enumerate}
\end{thm}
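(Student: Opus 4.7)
The plan is the synchronous coupling underlying classical propagation-of-chaos arguments. For each $n=1,\dots,N$, I would introduce an auxiliary process $Y^n=(Y^n_t)_{t\in[0,T]}$ obtained by running the continuous Euler scheme \eqref{contEuler} from the same initial datum $\bar X^{n,N}_0$ and driven by the same Brownian motion $B^n$ as in \eqref{Deq}, but using the \emph{theoretical} marginal $\bar\mu_{t_m}$ in place of the empirical $\bar\mu^N_{t_m}$. By construction, $Y^1,\dots,Y^N$ are i.i.d.\ copies of the Euler process $\bar X$, and the associated empirical measure $\nu^N=\tfrac1N\sum_n \delta_{Y^n}$ lives on $\mathcal{C}([0,T],\mathbb{R}^d)$.

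Exploiting the Lipschitz property of $b$ and $\sigma$ (Assumption \ref{AssumptionI}) together with the Burkholder--Davis--Gundy inequality on the continuous-time expansion, I would derive a recursion of the form
\begin{equation*}
\Big\|\sup_{s\in[0,t_{m+1}]}|\bar X^{n,N}_s-Y^n_s|\Big\|_p^p
\leq C_{p,L,T}\int_0^{t_{m+1}}\Big(\big\|\sup_{u\leq s}|\bar X^{n,N}_u-Y^n_u|\big\|_p^p + \big\|\mathcal{W}_p(\bar\mu^N_s,\bar\mu_s)\big\|_p^p\Big)\,ds.
\end{equation*}
Splitting $\mathcal{W}_p(\bar\mu^N_s,\bar\mu_s)\leq \mathcal{W}_p(\bar\mu^N_s,\nu^N_s)+\mathcal{W}_p(\nu^N_s,\bar\mu_s)$, controlling the first term by the canonical coupling $\mathcal{W}_p(\bar\mu^N_s,\nu^N_s)^p \leq \tfrac1N\sum_n |\bar X^{n,N}_s - Y^n_s|^p$, invoking the exchangeability of the particles, and applying Gronwall's lemma then yields
\begin{equation*}
\Big\|\sup_{0\leq m\leq M}\mathcal{W}_p(\bar\mu^N_{t_m},\bar\mu_{t_m})\Big\|_p \leq C_{d,p,L,T}\,\big\|\mathbb{W}_p(\bar\mu,\nu^N)\big\|_p,
\end{equation*}
using that $\sup_m \mathcal{W}_p(\nu^N_{t_m},\bar\mu_{t_m})$ is dominated by the path-space Wasserstein distance $\mathbb{W}_p(\bar\mu,\nu^N)$ through the time projections $\pi_{t_m}$. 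This proves part $(i)$.

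For part $(ii)$, I would invoke the empirical-measure convergence rate \cite[Theorem 1]{fournier2015rate} applied to the i.i.d.\ sample $Y^1,\dots,Y^N$. The moment propagation inherent in Proposition \ref{propeuler} ensures that $\sup_m \mathbb{E}|\bar X_{t_m}|^{p+\varepsilon}<+\infty$ as soon as $\|X_0\|_{p+\varepsilon}<+\infty$, so the three Fournier--Guillin regimes directly deliver the exponents stated in the theorem. The main technical obstacle is that $\mathbb{W}_p$ lives on the infinite-dimensional path space $\mathcal{C}([0,T],\mathbb{R}^d)$, whereas Fournier--Guillin is stated for empirical measures in $\mathbb{R}^d$; I would circumvent this by estimating the supremum in time under the $L^p$-norm marginal-by-marginal, so that the rate is governed by the ambient dimension $d$ of each time slice rather than the intrinsic dimension of the trajectory space, which is why the bound depends on $d$ alone and not on $d(M+1)$.
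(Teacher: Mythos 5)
Your proposal follows essentially the same route as the paper: the same synchronous coupling with i.i.d.\ copies $Y^n$ of the continuous Euler process driven by the same $B^n$ and initial data, the same Lipschitz--BDG--Gronwall recursion combined with the canonical empirical coupling and the projection Lemma \ref{relationd}, and the same marginal-by-marginal application of Fournier--Guillin for part $(ii)$, including the key observation that the rate is governed by the dimension $d$ of the time slices. The only difference is cosmetic (you write the Gronwall recursion in $p$-th power form rather than with the mixed $\int f + (\int f^2)^{1/2}$ terms of the paper's ``\`a la Gronwall'' lemma), so the argument is correct and matches the paper's proof.
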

Theorem \ref{thm1} can be considered as a \textit{strong} error of the particle method. The weak error, that is, the upper-bound of $|\Phi(\bar{\mu}_{t_m})-\EE \Phi(\bar{\mu}_{t_{m}}^{N})|$ for a function $\Phi: \mathcal{P}_{p}(\RD)\rightarrow \RD$ with appropriate differentiability, can be established in the future by applying similar techniques as  \cite{chassagneux2019weak}. Moreover, we refer to \cite{MR1910635} and \cite{Hoffmann_2022} for the studies of  the density simulation of $\bar{\mu}_{t_m}$ based on the particle method \eqref{Deq}. 

The proof of Theorem \ref{thm1} can be divided into  3 steps. 

\begin{itemize}
\item Step 1 (Subsection \ref{subsection21}):  We present some preliminary results on the McKean-Vlasov equation \eqref{Aeq} and the process $\bar{X}=(\bar{X}_t)_{t\in[0,T]}$  defined by the continuous Euler scheme \eqref{contEuler}. Additionally, we discuss basic properties of the space of probability distributions and the space of marginal distributions  of these two processes $X=(X_t)_{t\in[0,T]}$ and $\bar{X}=(\bar{X}_t)_{t\in[0,T]}$. 

\smallskip
\item Step 2  (Subsection \ref{subsection22}): The system $(\bar{X}^{1, N}_{t_m}, ...,\bar{X}^{N, N}_{t_m})_{0\leq m\leq M}$ defined by \eqref{Deq} can be considered as a particle system \textit{with} interaction through $\bar{\mu}^{N}_{t_{m}}= \frac{1}{N}\sum_{n=1}^{N}\delta_{\bar{X}^{n, N}_{t_{m}}}$ at each time $t_{m},  \,0\leq m\leq M$.   We define another particle system $(Y^{1}_{t}, ..., Y^{N}_{t})_{t\in[0,T]}$ \textit{without} interaction, that is,  $Y^{n}, \, 1\leq n\leq N$, are i.i.d. It\^o processes with the same distribution as $\bar{X}=(\bar{X}_t)_{t\in[0,T]}$ defined by  \eqref{contEuler}.
\smallskip
\item Step 3  (Subsection \ref{subsection23}): We compare these two particle systems and prove Theorem \ref{thm1}. 
\end{itemize}

\subsection{Preliminary results}\label{subsection21}

The following lemma shows that the $p$-th moments of  the unique strong solution $X=(X_t)_{t\in[0,T]}$ to the McKean-Vlasov equation \eqref{Aeq} and of the process $\bar{X}=(\bar{X}_t)_{t\in[0,T]}$  defined by the continuous Euler scheme \eqref{contEuler} are finite. 

\begin{lem}\label{finitemomentbar}(\!\!\cite[Proposition 2.1]{liu2020functional})
Assume that Assumption \ref{AssumptionI} holds for an index $p\in[2,+\infty)$. Then the unique solution $X=(X_t)_{t\in[0,T]}$ of \eqref{Aeq} and the process $\bar{X}^M=(\bar{X}^M_{t})_{t\in[0,T]}$ defined by the continuous Euler scheme \eqref{contEuler} satisfy the following inequality 
\begin{equation}\label{dnormX0}
\forall \,M\geq 1,\quad \Big\Vert \sup_{t\in[0,T]}\left|X_{t}\right|\,\Big\Vert _{p}\vee \Big\Vert \sup_{t\in[0,T]}\left|\bar{X}^{M}_{t}\right|\,\Big\Vert _{p}\leq C_{p,d,b,\sigma, T, L}\Big(1+\vertii{X_{0}}_{p}\Big),
\end{equation}
where $C_{p,d,b,\sigma, T, L}$ is a constant depending on $p, d,b,\sigma, T, L$. 
Hence, $X$, $\bar{X}\in L_{\CRD}^{p}\big(\Omega, \mathcal{F}, (\mathcal{F}_{t})_{t\geq0}, \mathbb{P}\big)$ and their respective probability distributions $\mu=\mathbb{P}\circ X^{-1}$ and $\bar{\mu}=\mathbb{P}\circ \bar{X}^{-1}$ lie in $\PPC$. 
\end{lem}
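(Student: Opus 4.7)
Both inequalities follow the same template: write the process in integral form, apply the Burkholder--Davis--Gundy (BDG) inequality to the martingale part, use the linear growth of $b$ and $\sigma$ inherited from their Lipschitz continuity in $(x,\mu)$ (Assumption \ref{AssumptionI}), and close with Gronwall. The key observation that makes the measure-dependent case reduce to a scalar Gronwall argument is that for $\mu_s = \mathcal{L}(X_s)$ one has
\[
\mathcal{W}_p(\mu_s, \delta_0) \;=\; \bigl(\mathbb{E}\,|X_s|^p\bigr)^{1/p} \;\leq\; \Bigl\Vert \sup_{r \leq s}|X_r|\Bigr\Vert_p,
\]
so the ``nonlinear'' measure argument is dominated by the very quantity we are trying to bound.

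First I would treat the McKean--Vlasov SDE \eqref{Aeq}. Write $X_t = X_0 + \int_0^t b(s,X_s,\mu_s)\,ds + \int_0^t \sigma(s,X_s,\mu_s)\,dB_s$ and apply BDG together with Jensen's inequality in the drift integral to obtain, for $t \in [0,T]$,
\[
\mathbb{E}\sup_{u \leq t}|X_u|^p \;\leq\; C_{p,T}\Bigl(\mathbb{E}|X_0|^p + \int_0^t \mathbb{E}|b(s,X_s,\mu_s)|^p \, ds + \int_0^t \mathbb{E}\vertiii{\sigma(s,X_s,\mu_s)}^p \, ds\Bigr).
\]
The Lipschitz hypothesis, applied between $(t,x,\mu)$ and $(t,0,\delta_0)$, together with the H\"older bound in $t$ at $(0,0,\delta_0)$, yields a linear growth estimate
\[
|b(s,x,\mu)| \vee \vertiii{\sigma(s,x,\mu)} \;\leq\; C_{b,\sigma,L,T}\bigl(1 + |x| + \mathcal{W}_p(\mu,\delta_0)\bigr).
\]
Substituting this in and using the observation above to replace $\mathcal{W}_p(\mu_s,\delta_0)^p$ by $\mathbb{E}\sup_{r \leq s}|X_r|^p$ gives an inequality of the form $\varphi(t) \leq C_1 + C_2 \int_0^t \varphi(s)\,ds$ with $\varphi(t) = \mathbb{E}\sup_{u \leq t}|X_u|^p$ and $C_1 = C_{p,d,b,\sigma,T,L}(1 + \Vert X_0 \Vert_p^p)$. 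Gronwall's lemma then delivers the desired bound for $X$.

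Next I would handle the continuous Euler scheme $\bar X^M$ defined by \eqref{contEuler}. The structure is identical, except that on each interval $(t_m, t_{m+1}]$ the coefficients are evaluated at the \emph{frozen} arguments $(t_m, \bar X_{t_m}, \bar\mu_{t_m})$ rather than at the current state. After applying BDG on $[0,t]$ and using the same linear growth, one reaches
\[
\mathbb{E}\sup_{u \leq t}|\bar X^M_u|^p \;\leq\; C_{p,T}\,\mathbb{E}|X_0|^p + C_{b,\sigma,L,T,p}\int_0^t\Bigl(1 + \mathbb{E}|\bar X^M_{\underline{s}}|^p + \mathcal{W}_p(\bar\mu_{\underline{s}},\delta_0)^p\Bigr)\,ds,
\]
where $\underline{s}$ denotes the preceding grid point. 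Since $\mathcal{W}_p(\bar\mu_{\underline{s}},\delta_0)^p = \mathbb{E}|\bar X^M_{\underline{s}}|^p \leq \mathbb{E}\sup_{r \leq s}|\bar X^M_r|^p$, Gronwall again closes the loop and produces a constant independent of $M$.

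The main obstacle is precisely the self-referential nature of the measure argument: unlike the classical SDE case, the linear growth of $b(s,\cdot,\mu_s)$ brings in the unknown quantity $\mathcal{W}_p(\mu_s,\delta_0)$ rather than a deterministic constant. The identity $\mathcal{W}_p(\mu_s,\delta_0) = \Vert X_s \Vert_p$ is what turns this apparent circularity into a scalar Gronwall inequality, and it is also the reason the constant in \eqref{dnormX0} only depends on $\Vert X_0 \Vert_p$ and not on any further norm of the law of $X_0$. The $L^p$-integrability and the continuity of paths then immediately place $X$ and $\bar X$ in $L^p_{\mathcal{C}(\RD)}$, and their laws in $\PPC$.
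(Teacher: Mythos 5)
Your argument is correct and is the standard route to this estimate (integral form, BDG plus Jensen, the linear growth bound of Lemma \ref{finitmomentXN}-(a), the identity $\mathcal{W}_p(\mu_s,\delta_0)=\Vert X_s\Vert_p$ that closes the self-reference, and Gronwall); the paper itself gives no proof here, quoting the result from \cite[Proposition 2.1]{liu2020functional}, and your sketch matches how such bounds are established there. The only point to make rigorous is the a priori finiteness of $\varphi(t)=\mathbb{E}\sup_{u\leq t}|X_u|^p$ before invoking Gronwall: for the Euler scheme this follows by a finite induction over the grid (as in the proof of Lemma \ref{finitmomentXN}-(b)), and for the McKean--Vlasov solution it comes from the fixed-point construction or a standard localization by stopping times.
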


For any random variable $Y\in\LPC$, its probability distribution ${P}_{Y}$ naturally lies in  $\mathcal{P}_{p}\big(\mathcal{C}([0, T], \mathbb{R}^{d})\big)$. Moreover, the space $\mathcal{P}_{p}\big(\mathcal{C}([0, T], \mathbb{R}^{d})\big)$ equipped with the Wasserstein distance is complete and separable since $\big(\mathcal{C}([0, T], \mathbb{R}^{d}), \left\Vert\cdot\right\Vert_{\sup}\big)$ is a Polish space (see \cite{bolley2008separability}).  
Moreover, as $\big(\mathcal{P}_{p}(\mathbb{R}^{d}), \mathcal{W}_{p}\big)$ is a complete space (see \cite{bolley2008separability}), the space $\mathcal{C}\big( [0, T], \mathcal{P}_{p}(\mathbb{R}^{d})\big)$ equipped with the uniform distance $d_{\mathcal{C}}$ defined in \eqref{distancedc} is also complete. 
Recall that the  mapping $\iota: \PPC \rightarrow \CPP$ is defined by \eqref{defiota} and 
the well-posedness of the mapping $\iota$ has been proved in \cite[Lemma 5.1.2]{liuthese}, which also implies that the  marginal distributions of processes $X$ and $\bar{X}$ lie in the space $\CPP$. 
\begin{lem}Let $(\mu_t)_{t\in[0, T]}$ and $(\bar{\mu}_t)_{t\in [0, T]}$ be respective marginal distributions of the unique solution $X$ of \eqref{Aeq} and the process $\bar{X}$ defined by the continuous Euler scheme \eqref{contEuler}. Then $(\mu_t)_{t\in[0, T]},\, (\bar{\mu}_t)_{t\in [0, T]}\in \CPP$.
\end{lem}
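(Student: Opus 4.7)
The plan is to reduce the continuity of $t\mapsto \mu_t$ and $t\mapsto \bar{\mu}_t$ in the Wasserstein metric $\mathcal{W}_p$ to the $L^p$-continuity in time of the processes $X$ and $\bar{X}$, exploiting the path-continuity of both processes together with the uniform $L^p$ bound from Lemma \ref{finitemomentbar}.

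First, I would verify that $\mu_t, \bar{\mu}_t \in \mathcal{P}_p(\mathbb{R}^d)$ for every $t \in [0,T]$. Since $\mu_t = \mathbb{P}\circ X_t^{-1}$ and $\bar{\mu}_t = \mathbb{P}\circ \bar{X}_t^{-1}$, we have
\[
\int_{\mathbb{R}^d} |\xi|^p\,\mu_t(d\xi) = \mathbb{E}\,|X_t|^p \leq \mathbb{E}\sup_{s\in[0,T]}|X_s|^p < +\infty,
\]
by Lemma \ref{finitemomentbar}, and similarly for $\bar{\mu}_t$. Thus both families are valued in $\mathcal{P}_p(\mathbb{R}^d)$.

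Next, for continuity, fix $s,t\in[0,T]$ and use the natural coupling $(X_s,X_t)$, which has marginals $\mu_s$ and $\mu_t$. By the definition \eqref{defwaspo} of the Wasserstein distance,
\[
\mathcal{W}_p(\mu_s,\mu_t) \leq \big(\mathbb{E}\,|X_s - X_t|^p\big)^{1/p} = \Vert X_s - X_t\Vert_p,
\]
and the same inequality holds for $\bar{\mu}_s,\bar{\mu}_t$ using $(\bar{X}_s,\bar{X}_t)$. Since $X \in L^p_{\mathcal{C}(\mathbb{R}^d)}$ by Lemma \ref{finitemomentbar}, its trajectories are $\mathbb{P}$-a.s. continuous, so $|X_s-X_t|^p \to 0$ $\mathbb{P}$-a.s.\ as $s\to t$. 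The dominating bound
\[
|X_s - X_t|^p \leq 2^p \sup_{u\in[0,T]}|X_u|^p, \qquad \text{with } \mathbb{E}\sup_{u\in[0,T]}|X_u|^p < +\infty,
\]
allows an application of Lebesgue's dominated convergence theorem, giving $\Vert X_s - X_t\Vert_p \to 0$, hence $\mathcal{W}_p(\mu_s,\mu_t)\to 0$. The same argument, relying on the fact that the continuous Euler scheme \eqref{contEuler} produces continuous paths by construction and satisfies the uniform $L^p$ bound of Lemma \ref{finitemomentbar}, yields $\mathcal{W}_p(\bar{\mu}_s,\bar{\mu}_t)\to 0$.

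Combining these observations, both $t\mapsto \mu_t$ and $t\mapsto \bar{\mu}_t$ are continuous maps from $[0,T]$ into $(\mathcal{P}_p(\mathbb{R}^d),\mathcal{W}_p)$, which is the required conclusion. Alternatively, one can simply invoke \cite[Lemma 5.1.2]{liuthese}, which establishes the well-posedness of the map $\iota$ defined in \eqref{defiota}, and apply it to $\mu$ and $\bar\mu$ (both in $\mathcal{P}_p(\mathcal{C}([0,T],\mathbb{R}^d))$ by Lemma \ref{finitemomentbar}) to conclude $(\mu_t)_{t\in[0,T]} = \iota(\mu)$ and $(\bar{\mu}_t)_{t\in[0,T]} = \iota(\bar\mu)$ belong to $\mathcal{C}([0,T],\mathcal{P}_p(\mathbb{R}^d))$. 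There is no real obstacle here; the only minor point worth attention is the verification that the dominating random variable $\sup_{u\in[0,T]}|X_u|^p$ is indeed integrable, which is exactly what Lemma \ref{finitemomentbar} supplies.
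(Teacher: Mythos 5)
Your proposal is correct. The paper itself does not write out a proof of this lemma: it simply observes that $\mu,\bar{\mu}\in\PPC$ (by Lemma \ref{finitemomentbar}) and invokes the well-posedness of the mapping $\iota$ from \cite[Lemma 5.1.2]{liuthese}, which is exactly the alternative route you sketch in your closing paragraph. Your main argument — bounding $\mathcal{W}_p(\mu_s,\mu_t)$ by $\Vert X_s-X_t\Vert_p$ via the natural coupling $(X_s,X_t)$, then using a.s.\ path continuity together with the dominating variable $2^p\sup_{u\in[0,T]}|X_u|^p$ (integrable by Lemma \ref{finitemomentbar}) and dominated convergence — is a correct, self-contained version of what that cited lemma encapsulates, and it applies verbatim to $\bar{X}$ since the continuous Euler scheme has continuous paths by construction. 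The only thing your direct route buys is independence from the external reference; the only thing the citation route buys is brevity. Both are fine, and you have correctly identified the single point that needs checking, namely the integrability of the dominating variable.
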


Finally, the following Lemma, whose proof is postponed to Appendix \ref{appb}, shows  the relation between $\mathbb{W}_{p,t}(\mu, \nu)$ and $\sup_{s\in[0, t]}\mathcal{W}_{p}(\mu_{s}, \nu_{s})$.

\begin{lem}\label{relationd} 
For every $\mu, \nu\in \mathcal{P}_{p}\big(\mathcal{C}([0, T], \mathbb{R}^{d})\big)$, we have, for every $ t\in[0,T],$
\[\;\sup_{s\in[0, t]}\mathcal{W}_{p}(\mu_{s}, \nu_{s})\leq \mathbb{W}_{p,t}(\mu, \nu),\]
where $\mu_{s}=\mu\,\circ\,\pi_{s}^{-1}$ and $\nu_{s}=\nu\,\circ\,\pi_{s}^{-1}$.
In particular, for every $\mu, \nu\in \mathcal{P}_{p}\big(\mathcal{C}([0, T], \mathbb{R}^{d})\big)$, $d_{\mathcal{C}}(\iota(\mu), \iota(\nu))\leq \mathbb{W}_{p}(\mu,\nu)$. Hence the mapping $\iota$ defined in \eqref{defiota} is 1-Lipschitz continuous.
\end{lem}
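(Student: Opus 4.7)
The plan is to prove the inequality by a standard coupling argument: take any admissible coupling $\pi\in\bar\Pi(\mu,\nu)$, push it forward by the evaluation map $\pi_s$ to get a coupling of the marginals $\mu_s$ and $\nu_s$, and then bound things uniformly in $s$ before passing to the infimum over $\pi$.

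More precisely, first I would fix $\mu,\nu\in\PPC$ and an arbitrary $\pi\in\bar\Pi(\mu,\nu)$. For each $s\in[0,T]$, the map $(x,y)\in\CRD\times\CRD\mapsto(\pi_s(x),\pi_s(y))=(x_s,y_s)\in\RD\times\RD$ is continuous (hence Borel), and the pushforward $\pi^{(s)}\coloneqq \pi\circ(\pi_s,\pi_s)^{-1}$ belongs to $\Pi(\mu_s,\nu_s)$ since its marginals are $\mu\circ\pi_s^{-1}=\mu_s$ and $\nu\circ\pi_s^{-1}=\nu_s$. Therefore, for any $s\in[0,t]$,
\[\mathcal{W}_p(\mu_s,\nu_s)^p\leq \int_{\RD\times\RD}|u-v|^p\,\pi^{(s)}(du,dv)=\int_{\CRD\times\CRD}|x_s-y_s|^p\,\pi(dx,dy)\leq \int_{\CRD\times\CRD}\sup_{r\in[0,t]}|x_r-y_r|^p\,\pi(dx,dy).\]
The right-hand side does not depend on $s$, so taking the supremum over $s\in[0,t]$ on the left and then the infimum over $\pi\in\bar\Pi(\mu,\nu)$ on the right yields
\[\sup_{s\in[0,t]}\mathcal{W}_p(\mu_s,\nu_s)^p\leq \mathbb{W}_{p,t}(\mu,\nu)^p,\]
which is the desired inequality after taking $p$-th roots.

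For the "in particular" statement, I would simply specialize $t=T$, noting that $\sup_{s\in[0,T]}|x_s-y_s|=\Vert x-y\Vert_{\sup}$, so that $\mathbb{W}_{p,T}(\mu,\nu)=\mathbb{W}_p(\mu,\nu)$. Combined with the definition $d_{\mathcal{C}}(\iota(\mu),\iota(\nu))=\sup_{t\in[0,T]}\mathcal{W}_p(\mu_t,\nu_t)$ from \eqref{distancedc} and $\iota(\mu)=(\mu_t)_{t\in[0,T]}$ from \eqref{defiota}, the previous inequality gives $d_{\mathcal{C}}(\iota(\mu),\iota(\nu))\leq \mathbb{W}_p(\mu,\nu)$, which is precisely the $1$-Lipschitz continuity of $\iota$.

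No step looks like a real obstacle: the only subtle point is that the infimum over $\pi$ must be taken \emph{after} the supremum over $s$ has been absorbed into an upper bound that is independent of $s$; swapping the two would only give the (weaker) pointwise inequality. The use of $\sup_{r\in[0,t]}|x_r-y_r|^p$ inside the integral against $\pi$ is what makes the bound uniform in $s$ and lets the $\inf_\pi$ pass through cleanly to yield $\mathbb{W}_{p,t}(\mu,\nu)^p$.
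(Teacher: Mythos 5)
Your proof is correct and follows essentially the same route as the paper's: both arguments push a path-space coupling $\pi\in\bar\Pi(\mu,\nu)$ forward through the evaluation maps to obtain a coupling of $\mu_s$ and $\nu_s$, and then bound $\mathcal{W}_p(\mu_s,\nu_s)^p$ uniformly in $s$ by $\int \sup_{r\in[0,t]}|x_r-y_r|^p\,\pi(dx,dy)$. The only cosmetic difference is that you conclude by taking the infimum over all couplings, whereas the paper invokes the existence of an optimal coupling attaining $\mathbb{W}_{p,t}(\mu,\nu)$; your version avoids that (harmless) appeal to the ``usual arguments.''
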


\subsection{Definition of the particle systems with and without interaction}\label{subsection22}

We define the continuous expansion $(X_{t}^{1, N}, ..., X_{t}^{N, N})_{t\in[0,T]}$ of the particle system \eqref{Deq} as follows : set $\bar{X}_{0}^{\,n, N}, 1\leq n \leq N \widesim{\,\text{i.i.d.}\,}X_0$; for any $n\in\{1, ..., N\}$ and for any $t\in(t_{m}, t_{m+1}]$, set 
\begin{equation}\label{Nparteulercon}
\bar{X}_{t}^{\,n, N}=\bar{X}_{t_{m}}^{\,n, N}+(t-t_{m})b\big(t_{m}, \bar{X}_{t_{m}}^{\,n, N}, \bar{\mu}_{t_{m}}^{N}\big)+\sigma\big(t_{m}, \bar{X}_{t_{m}}^{\,n, N}, \bar{\mu}_{t_{m}}^{N}\big)(B_{t}^{n}-B_{t_{m}}^{n}) \text{ with }  \bar{\mu}^{N}_{t_{m}}\coloneqq \frac{1}{N}\sum_{n=1}^{N}\delta_{\bar{X}^{n, N}_{t_{m}}}.
\end{equation} 
For every $t\in[t_{m}, t_{m+1})$, we define $\underline{t}=t_{m}$. Then, the continuous expansion $(X_{t}^{1, N}, ..., X_{t}^{N, N})_{t\in[0,T]}$ are solutions to the following  equations with initial random variable $\bar{X}_{0}^{\,n, N}, 1\leq n \leq N,$
\begin{equation}\label{underlineuX}
\bar{X}_{t}^{\,n, N}\!\!=\bar{X}_{0}^{\,n, N}\!+\!\int_{0}^{t}b(\underline{u}, \bar{X}_{\underline{u}}^{\,n, N}, \bar{\mu}_{\underline{u}}^{N})du+\!\int_{0}^{t}\sigma(\underline{u}, \bar{X}_{\underline{u}}^{\,n, N}, \bar{\mu}_{\underline{u}}^{N})dB^{n}_{t}, \;1\!\leq \!n\!\leq \!N, \text{ with }\bar{\mu}_{\underline{t}}^{N}\coloneqq\frac{1}{N}\sum_{n=1}^{N}\delta_{\bar{X}_{\underline{t}}^{\,n, N}}.
\end{equation}
\begin{lem}\label{finitmomentXN}
Suppose that Assumption \ref{AssumptionI} holds for some $p\in[2,+\infty)$.
\begin{enumerate}[$(a)$]
\item  The coefficients $b$ and $\sigma$ have a linear growth in the sense that there exists a constant $C_{b,\sigma, L, T}$ depending on $b, \sigma$, $L$ and $T$ such that 
\begin{equation}\label{lgrowth}
\forall \, t\in[0, T], \forall \,x\in\mathbb{R}^{d}, \forall \,\mu\in\mathcal{P}_{p}(\mathbb{R}^{d}), \;\left|b(t, x,\mu)\right|\vee\vertiii{\sigma(t, x,\mu)}\leq C_{b,\sigma, L, T}(1+\left|x\right|+\mathcal{W}_{p}(\mu,\delta_{0})).
\end{equation}
\item Let $(\bar{X}_{t}^{1, N}, ..., \bar{X}_{t}^{N, N})_{t\in[0,T]}$ be processes defined by \eqref{Nparteulercon}.  Then for a fixed temporal discretization number $M\in\mathbb{N}^{*}$, we have  
\begin{equation}\label{finitenorm}
\forall \,n\in\{1, ..., N\}, \quad \Big\Vert \sup_{\;t\in[0,T]}\big|\bar{X}_{t}^{n, N}\big|\;\Big\Vert_p<+\infty.\end{equation}
\end{enumerate}
\end{lem}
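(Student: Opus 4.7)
\textbf{Part (a)} is a direct consequence of Assumption \ref{AssumptionI}. The plan is to apply the triangle inequality
\[
|b(t,x,\mu)| \;\leq\; |b(t,x,\mu)-b(t,0,\delta_{0})| \;+\; |b(t,0,\delta_{0})-b(0,0,\delta_{0})| \;+\; |b(0,0,\delta_{0})|,
\]
controlling the first summand by the spatial/measure Lipschitz hypothesis as $L(|x|+\mathcal{W}_{p}(\mu,\delta_{0}))$, the second by the $\gamma$-Hölder-in-time hypothesis as $L\cdot t^{\gamma}\leq LT^{\gamma}$, and absorbing $|b(0,0,\delta_{0})|$ into a constant depending on $b$. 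Repeating the same argument for $\sigma$ yields the uniform linear growth bound with a constant $C_{b,\sigma,L,T}$.

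\textbf{Part (b)}, the main step, hinges on an \emph{exchangeability} observation. Because $(\bar{X}^{n,N}_{0})_{1\leq n\leq N}$ are i.i.d.\ and the Brownian motions $(B^{n})_{1\leq n\leq N}$ are i.i.d.\ and independent of the initial data, the joint law of $(\bar{X}^{1,N},\dots,\bar{X}^{N,N})$ is invariant under permutations of the particle labels; in particular, all one-particle marginal laws coincide. Setting $v_{m}:=\mathbb{E}|\bar{X}^{n,N}_{t_{m}}|^{p}$ (independent of $n$), the definition of $\bar{\mu}^{N}_{t_{m}}$ then gives
\[
\mathbb{E}\,\mathcal{W}_{p}(\bar{\mu}^{N}_{t_{m}},\delta_{0})^{p} \;=\; \mathbb{E}\Big[\tfrac{1}{N}\sum_{k=1}^{N}|\bar{X}^{k,N}_{t_{m}}|^{p}\Big] \;=\; v_{m},
\]
so the interaction term is controlled by the scalar quantity being tracked. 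I would then induct on $m$: plugging \eqref{lgrowth} into the recursion \eqref{Deq}, using that $Z^{n}_{m+1}$ is independent of $\mathcal{F}_{t_{m}}$ with $\mathbb{E}|Z^{n}_{m+1}|^{p}<+\infty$, a standard computation yields a discrete inequality of the form $v_{m+1}\leq (1+Ch)v_{m}+Ch$ for some $C=C_{p,d,b,\sigma,L,T}$. Discrete Gronwall then produces $\max_{0\leq m\leq M} v_{m}\leq C_{p,d,b,\sigma,T,L}\bigl(1+\|X_{0}\|_{p}^{p}\bigr)$, uniformly in $N$ and $M$.

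To upgrade this grid estimate to the supremum of the continuous expansion \eqref{Nparteulercon}, I would work on each subinterval $[t_{m},t_{m+1}]$: on it, $\bar{X}^{n,N}_{t}-\bar{X}^{n,N}_{t_{m}}$ decomposes as a deterministic drift of size at most $h\cdot|b(t_{m},\bar{X}^{n,N}_{t_{m}},\bar{\mu}^{N}_{t_{m}})|$ plus a Brownian stochastic integral with constant diffusion $\sigma(t_{m},\bar{X}^{n,N}_{t_{m}},\bar{\mu}^{N}_{t_{m}})$. The BDG inequality applied to the latter together with the linear growth from part (a) shows that $\mathbb{E}\sup_{t\in[t_{m},t_{m+1}]}|\bar{X}^{n,N}_{t}|^{p}$ is finite as soon as $v_{m}<+\infty$; summing over the $M$ subintervals delivers \eqref{finitenorm}.

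The principal obstacle is conceptual rather than computational: the coupling through $\bar{\mu}^{N}_{\underline{u}}$ a priori forces one to track the joint distribution of all $N$ particles simultaneously, which would make a straightforward Gronwall argument yield a constant blowing up with $N$. The exchangeability argument is what collapses the analysis to the one-particle scalar recursion for $v_{m}$, and it is precisely what ultimately makes the bound uniform in $N$ — a feature that will be reused in the proof of Theorem \ref{thm1}.
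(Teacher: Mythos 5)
Your proposal is correct. Part $(a)$ is essentially the paper's own argument: a triangle inequality against $b(t,0,\delta_0)$, the spatial/measure Lipschitz bound, and the observation that $\sup_{t\in[0,T]}|b(t,0,\delta_0)|$ is finite (the paper absorbs the time dependence directly into the constant rather than invoking the H\"older hypothesis explicitly, but the content is identical). For part $(b)$ you take a genuinely different, and in fact stronger, route. The paper only needs finiteness for fixed $M$ and $N$, so it runs a crude induction on $m$: Minkowski plus the linear growth of $(a)$ plus the identity $\EE\,\mathcal{W}_p^p(\bar\mu^N_{t_m},\delta_0)=\tfrac1N\sum_n\EE|\bar X^{n,N}_{t_m}|^p$ show each grid moment is finite, and the sup over $m$ and $n$ is then bounded by the (harmless) factor $(M+1)N$ times the largest grid moment; the continuous-time upgrade uses the $\mathcal F_{t_m}$-measurability of $\sigma(\cdot)$ and the independence of the Brownian increment rather than BDG, but that is cosmetic. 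Your exchangeability observation and the scalar recursion $v_{m+1}\le(1+Ch)v_m+Ch$ with discrete Gronwall buy a bound that is uniform in $N$ and $M$ — a genuinely stronger statement that the lemma does not require (and the paper does not prove here; the analogous uniform control is instead extracted later via the comparison with the i.i.d.\ system $Y^n$ in Theorem \ref{thm1}). One caveat: for general $p\ge2$ the step from the one-step Euler recursion to $v_{m+1}\le(1+Ch)v_m+Ch$ for the $p$-th moment is standard but not a plain Minkowski computation (Minkowski on $L^p$ norms only yields a $(1+C\sqrt{h})$ factor per step, which is enough for finiteness at fixed $M$ but not for uniformity in $M$); one needs the martingale structure of the Gaussian increment, e.g.\ via a discrete BDG or a careful expansion of $|\cdot|^p$. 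Since the lemma only claims finiteness for fixed $M$, even the weaker version of your intermediate estimate suffices, so this is a presentational gap rather than a mathematical one.
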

The proof of Lemma \ref{finitmomentXN} is postponed to Appendix \ref{appb}. 
The  system $(\bar{X}_{t}^{1, N}, ..., \bar{X}_{t}^{N, N})_{t\in[0,T]}$ defined by \eqref{Nparteulercon} can be considered as a particle system having interaction through $\bar{\mu}^{N}_{t_{m}}\!\!= \!\frac{1}{N}\sum_{n=1}^{N}\!\delta_{\bar{X}^{n, N}_{t_{m}}}, \,0\leq\! m\leq \!M$.   Now we define another particle system $(Y^{1}_{t}, ..., Y^{N}_{t})_{t\in[0,T]}$ without interaction, which are essentially i.i.d. processes of $\bar{X}=(\bar{X}_t)_{t\in[0,T]}$ defined by the continuous Euler scheme \eqref{contEuler}. Based on the same Brownian motions $B^{n}, n=1, ..., N$ as in (\ref{Nparteulercon}), these processes $Y^{n}=(Y^{n}_{t})_{t\in[0, T]}, \,1\leq n\leq N,$ are defined by the following equations
\begin{align}\label{noninteraction}
&\forall\, n=1, ..., N,\, \forall \,m=0, ..., M-1, \,\forall\,t\in(t_m, t_{m+1}]\nonumber\\
&\qquad Y_t^{n}=Y_{t_{m}}^{n}+(t-t_{m})b(t_m, Y_{t_{m}}^{n}, \bar{\mu}_{t_{m}})+\sigma(t_m, Y_{t_{m}}^{n}, \bar{\mu}_{t_{m}})(B_{t}^{n}-B_{t_m}^{n}),\quad Y_{0}^{n}=\bar{X}_{0}^{n, N}
\end{align}
where for every $t\in[0,T]$, $\bar{\mu}_t$ in \eqref{noninteraction} is the marginal distribution at time $t$ of the process $\bar{X}=(\bar{X}_t)_{t\in[0,T]}$ defined by \eqref{contEuler}. By using the same notation $\underline{t}$ as in \eqref{underlineuX}, for every $n=1, ..., N$, $(Y^{n}_t)_{t\in[0,T]}$ defined by \eqref{noninteraction} is the solution of 
\[Y_t^{n}=Y_0^n+\int_{0}^{t}b(\underline{u}, Y_{\underline{u}}, \bar{\mu}_{\underline{u}})du+\int_{0}^{t}\sigma(\underline{u}, Y_{\underline{u}}, \bar{\mu}_{\underline{u}})dB^n_u \quad \text{with}\quad Y_0^n=\bar{X}_0^{n, N}. \]
Moreover, it is obvious that 
the processes $Y^{n}, \,1\leq n \leq N,$  are i.i.d copies of $\bar{X}$ and then 
\begin{equation}\label{empme}
\nu^{N, \omega}\coloneqq\frac{1}{N}\sum_{n=1}^{N}\delta_{Y^{n}(\omega)},\;\omega\in \Omega
\end{equation}
is the empirical measure of $\bar{\mu}=\PP\circ \bar{X}^{-1}$. When there is no ambiguity, we will write $\nu^{N}$ instead of $\nu^{N, \omega}$. The random measure $\nu^{N, \omega}$ is valued in $\PPC$. In fact, for every $\omega\in\Omega$, $Y^{n}(\omega)$ lies in $\CRD$ so that $\vertii{Y^{n}(\omega)}_{\sup}<+\infty$. Hence, for every $\omega\in\Omega$,
\[\int_{\CRD}\vertii{\xi}_{\sup}^{p}\nu^{N, \omega}(d\xi)=\frac{1}{N}\sum_{n=1}^{N}\vertii{Y^{n}(\omega)}_{\sup}^{p}<+\infty.\]
As before, we write $\nu^{N, \omega}_{t}\coloneqq \nu^{N, \omega}\circ\pi_{t}^{-1}=\frac{1}{N}\sum_{n=1}^{N}\delta_{Y^{n}_{t}(\omega)}.$ Thus, $\iota(\nu^{N, \omega})=(\nu^{N, \omega}_{t})_{t\in[0, T]}\in\CPP$. 

\subsection{Proof of Theorem \ref{thm1}}\label{subsection23}

The proof of Theorem \ref{thm1} relies on a variant version of Gronwall's Lemma (see e.g. \cite[Lemma 7.3]{pages2018numerical} for the proof) and on \cite[Theorem 1]{fournier2015rate}. For the reader's convenience, we restate the following Theorem \ref{FG} from \cite[Theorem 1]{fournier2015rate}, which provides a non-asymptotic upper bound on the convergence rate in the Wasserstein distance of the empirical measures.

\begin{lem}[``\`A la Gronwall" Lemma]\label{Gronwall}
Let $f : [0, T]\rightarrow\mathbb{R}_{+}$ be a Borel, locally bounded, non-negative and non-decreasing function and let $\psi: [0, T]\rightarrow\mathbb{R}_{+}$ be a non-negative non-decreasing function satisfying 
\[\forall \,t\in[0, T],\,\, f(t)\leq A\int_{0}^{t}f(s)ds+B\left(\int_{0}^{t}f^{2}(s)ds\right)^{\frac{1}{2}}+\psi(t),\]
where $A, B$ are two positive real constants. Then, for any $t\in[0, T],$ $ f(t)\leq 2e^{(2A+B^{2})t}\psi(t).$
\end{lem}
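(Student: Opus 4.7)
The plan is to eliminate the nonlinear square-root term $B\bigl(\int_0^t f^2(s)\,ds\bigr)^{1/2}$ on the right-hand side by majorising it by an affine expression in $f(t)$ and $\int_0^t f(s)\,ds$, thereby reducing the hypothesis to a classical linear Gronwall inequality with non-decreasing forcing term. Two ingredients drive this reduction: the monotonicity of $f$, which allows one to pull out a factor of $f(t)$ from the $L^2$-integral, and Young's inequality with a carefully tuned parameter so that the coefficient of $f(t)$ produced on the right equals exactly $1/2$ and can therefore be absorbed on the left.

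Concretely, the first step uses that since $f$ is non-decreasing one has $f^2(s)\le f(s)f(t)$ for every $s\in[0,t]$, and hence
\[\Bigl(\int_0^t f^2(s)\,ds\Bigr)^{1/2}\le \sqrt{f(t)}\,\Bigl(\int_0^t f(s)\,ds\Bigr)^{1/2}.\]
The second step applies Young's inequality in the form $\sqrt{uv}\le \tfrac{\lambda}{2}u+\tfrac{1}{2\lambda}v$ with the choice $\lambda=1/B$, yielding
\[B\Bigl(\int_0^t f^2(s)\,ds\Bigr)^{1/2}\le \tfrac12 f(t)+\tfrac{B^2}{2}\int_0^t f(s)\,ds.\]
Plugging this estimate back into the hypothesis and transferring $\tfrac12 f(t)$ to the left-hand side produces the linear integral inequality
\[f(t)\le (2A+B^2)\int_0^t f(s)\,ds+2\psi(t),\]
in which the forcing term $2\psi$ is non-decreasing.

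The conclusion then follows from the classical Gronwall lemma applied to this reduced inequality: since $2\psi$ is non-decreasing, a direct iteration argument (or equivalently differentiating $t\mapsto e^{-(2A+B^2)t}\int_0^t f(s)\,ds$) gives $f(t)\le 2\psi(t)\, e^{(2A+B^2)t}$, which is exactly the claimed bound. The one genuinely subtle point is the absorption step $f(t)-\tfrac12 f(t)=\tfrac12 f(t)$: it is precisely the local boundedness of $f$ that legitimises this manipulation, since if $f$ were allowed to take the value $+\infty$ on $[0,T]$ the subtraction would be ill-defined. The other minor ``pitfall'' is selecting the Young parameter $\lambda$: any value other than $1/B$ either fails to produce a coefficient strictly less than $1$ in front of $f(t)$ (preventing absorption) or yields a suboptimal exponential rate, so the choice $\lambda=1/B$ is essentially forced by the form of the desired conclusion.
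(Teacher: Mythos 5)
Your proof is correct: the monotonicity bound $f^{2}(s)\le f(s)f(t)$, the Young-inequality step with $\lambda=1/B$ that lets you absorb $\tfrac12 f(t)$ into the left-hand side (legitimate because $f$ is finite, being locally bounded), and the resulting linear Gronwall inequality $f(t)\le (2A+B^{2})\int_0^t f(s)\,ds+2\psi(t)$ with non-decreasing forcing term yield exactly $f(t)\le 2e^{(2A+B^{2})t}\psi(t)$. The paper does not reprove this lemma but refers to \cite[Lemma 7.3]{pages2018numerical}, whose proof is essentially this same reduction, so your argument follows the intended route.
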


\begin{thm}(\!\!\cite[Theorem 1]{fournier2015rate})\label{FG}
Let $p>0$ and let $\mu\in\mathcal{P}_{q}(\mathbb{R}^{d})$ for some $q>p$. Let $U^{1}(\omega), ..., U^{n}(\omega), ...$ be i.i.d random variables with distribution $\mu$. Let $\mu_{n}^{\omega}$ denote the empirical measure of $\mu$ defined by
\[\mu_{n}^{\omega}\coloneqq \frac{1}{n}\sum_{i=1}^{n}\delta_{U^{i}(\omega)}.\]
Then, there exists a real constant $C$ only depending on $p,d,q$ such that, for all $n\geq 1$,
\[\small{\mathbb{E}\Big(\mathcal{W}_{p}^{p}(\mu_{n}^{\omega}, \mu)\Big)\leq CM_{q}^{p/q}(\mu)\times\begin{cases}
n^{-1/2}+n^{-(q-p)/q} & \:\mathrm{if}\:p>d/2\:\mathrm{and}\:q\neq2p\\
n^{-1/2}\log(1+n)+n^{-(q-p)/q} & \:\mathrm{if}\:p=d/2\:\mathrm{and}\:q\neq2p\\
n^{-p/d}+n^{-(q-p)/q} & \:\mathrm{if}\:p\in(0, d/2)\:\mathrm{and}\:q\neq d/(d-p)
\end{cases}},\]
where $M_{q}(\mu)\coloneqq \int_{\mathbb{R}^{d}}\left|\xi\right|^{q}\mu(d\xi)$.
\end{thm}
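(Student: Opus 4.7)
My plan is to implement the classical propagation-of-chaos template anticipated by the author in Subsections \ref{subsection21}--\ref{subsection23}. The strategy introduces the non-interacting i.i.d.\ system $(Y^n)_{n=1}^N$ from \eqref{noninteraction}, couples it pathwise with the interacting system $(\bar X^{n,N})_{n=1}^N$ of \eqref{Nparteulercon} via the shared Brownian motions $(B^n)$ and shared initial data $\bar X_0^{n,N}=Y_0^n$, and closes the comparison by a Gronwall-type argument. The two workhorses are the Lipschitz structure of $(b,\sigma)$ from Assumption \ref{AssumptionI} combined with BDG for the stochastic integral, and Theorem \ref{FG} applied marginally on $\RD$ to the empirical measure $\nu^N_t$ of the i.i.d.\ sample.

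Starting from the triangle decomposition
\[
\mathcal{W}_p(\bar\mu^N_{t_m},\bar\mu_{t_m})\leq \mathcal{W}_p(\bar\mu^N_{t_m},\nu^N_{t_m})+\mathcal{W}_p(\nu^N_{t_m},\bar\mu_{t_m}),
\]
I would handle the first summand (particle vs.\ empirical) as follows. The particle-index coupling gives the pathwise bound $\mathcal{W}_p^p(\bar\mu^N_{t_m},\nu^N_{t_m})\leq \frac{1}{N}\sum_{n=1}^{N}|\bar X^{n,N}_{t_m}-Y^n_{t_m}|^p$, so taking sup over $m$, expectation, and using exchangeability of the $N$-tuple $(\bar X^{n,N},Y^n)_{n=1}^N$ reduces the analysis to $g(t):=\EE\sup_{s\leq t}|\bar X^{1,N}_s-Y^1_s|^p$. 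Subtracting the integrated SDEs \eqref{underlineuX} and \eqref{noninteraction} for $n=1$, applying BDG to the martingale part, Jensen to the drift, and the Lipschitz bound from Assumption \ref{AssumptionI} yields
\[
g(t)\leq C_{p,L,T}\int_0^t g(u)\,du+C_{p,L,T}\int_0^t\EE[\mathcal{W}_p^p(\bar\mu^N_{\underline u},\bar\mu_{\underline u})]\,du.
\]
A further triangle split of the measure term and exchangeability absorb the interacting-to-empirical contribution into $g(u)$, so that Lemma \ref{Gronwall} concludes with $g(T)\leq C\int_0^T\EE[\mathcal{W}_p^p(\nu^N_u,\bar\mu_u)]\,du$; the entire particle-to-empirical piece is thereby controlled by the pointwise empirical-to-target error (in particular $\EE\sup_m\mathcal{W}_p^p(\bar\mu^N_{t_m},\nu^N_{t_m})\leq g(T)$ by exchangeability).

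For part $(i)$, I combine the above with Lemma \ref{relationd}, which dominates $\sup_m\mathcal{W}_p(\nu^N_{t_m},\bar\mu_{t_m})$ by the path-space $\mathbb{W}_p(\nu^N,\bar\mu)$ and also bounds the Gronwall integrand, to obtain the stated inequality with constant $C_{d,p,L,T}$. The convergence $\vertii{\mathbb{W}_p(\nu^N,\bar\mu)}_p\to 0$ then follows from Varadarajan's theorem on the Polish space $\PPC$ (a.s.\ weak convergence of i.i.d.\ empirical measures on a Polish space) upgraded to $\mathbb{W}_p$-convergence via the uniform moment bound from Lemma \ref{finitemomentbar}. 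For part $(ii)$, the strengthened hypothesis $\vertii{X_0}_{p+\varepsilon}<+\infty$ combined with Lemma \ref{finitemomentbar} at exponent $p+\varepsilon$ gives $\sup_{u\in[0,T]}\mathcal{M}_{p+\varepsilon}(\bar\mu_u)<+\infty$; Theorem \ref{FG} applied on $\RD$ at each $u$ with $q=p+\varepsilon$ then converts $\EE[\mathcal{W}_p^p(\nu^N_u,\bar\mu_u)]$ into the tabulated explicit $N$-rates, and integration in $u$ controls the particle-to-empirical piece; the same FG estimate pointwise in $m$ handles the empirical-to-target piece.

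The main obstacle I foresee is the bookkeeping for the $\sup_m$ in the empirical-to-target piece $\vertii{\sup_m\mathcal{W}_p(\nu^N_{t_m},\bar\mu_{t_m})}_p$ under part $(ii)$: a naive reroute through Lemma \ref{relationd} forfeits the finite-dimensional FG rate to the covering geometry of $\PPC$, while a termwise triangle bound brings in a factor $M^{1/p}$. The design of the Gronwall estimate above places the sup inside the particle-to-empirical piece only, so that Theorem \ref{FG} can be applied directly on $\RD$ pointwise in $m$ with the uniform moment control from Lemma \ref{finitemomentbar}; I expect the constant $\widetilde C$ to absorb any remaining $T$- and $M$-dependence while preserving the stated $N$-rate.
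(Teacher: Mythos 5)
Your proposal does not prove the statement at hand. The statement is Theorem \ref{FG} itself, i.e.\ the Fournier--Guillin bound $\mathbb{E}\big[\mathcal{W}_{p}^{p}(\mu_{n}^{\omega},\mu)\big]\leq C M_{q}^{p/q}(\mu)\,(\dots)$ for the empirical measure of an i.i.d.\ sample from a fixed $\mu\in\mathcal{P}_{q}(\RD)$, with the three dimension-dependent regimes $p>d/2$, $p=d/2$, $p<d/2$. What you have written is a proof sketch of Theorem \ref{thm1} (the convergence rate of the particle method): you couple the interacting system \eqref{Nparteulercon} with the non-interacting system \eqref{noninteraction}, run a Gronwall argument, and then \emph{invoke Theorem \ref{FG}} to convert $\mathbb{E}[\mathcal{W}_{p}^{p}(\nu^{N}_{u},\bar\mu_{u})]$ into explicit $N$-rates. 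Using the target statement as one of your ``two workhorses'' makes the argument circular as a proof of that statement; nothing in your text addresses its actual content --- there is no analysis of how an i.i.d.\ empirical measure deviates from $\mu$, no mechanism producing the rates $n^{-1/2}$, $n^{-1/2}\log(1+n)$, $n^{-p/d}$ and the moment-tail term $n^{-(q-p)/q}$, and no explanation of why the critical dimension threshold $d/2$ appears.

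For the record, the paper does not reprove this result either: it is restated verbatim from \cite[Theorem 1]{fournier2015rate} purely for the reader's convenience, and its proof lies outside the propagation-of-chaos toolkit you deploy. The genuine proof proceeds by entirely different means: one bounds $\mathcal{W}_{p}^{p}(\mu_n^{\omega},\mu)$ by a weighted sum, over dyadic scales and dyadic partitions of $\RD$ (after splitting space into annuli whose contribution is controlled by the $q$-th moment $M_q(\mu)$), of quantities of the form $\sum_F |\mu_n^{\omega}(F)-\mu(F)|$, and then controls these via elementary $\mathbb{E}|\mu_n^{\omega}(F)-\mu(F)|\leq\min\big(2\mu(F),\sqrt{\mu(F)/n}\big)$ estimates; the balance between the number of cells per scale ($\sim 2^{kd}$) and the scale weight ($\sim 2^{-kp}$) is precisely what produces the three regimes in $d$ and $p$, and the truncation of the moment tail produces the $n^{-(q-p)/q}$ term. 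If your task is to prove Theorem \ref{FG}, you need an argument of this type (or another genuinely measure-theoretic one); if your task was in fact Theorem \ref{thm1}, then your sketch is aimed at the right target but was submitted against the wrong statement.
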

In particular, Theorem \ref{FG} implies that for $p\geq 2$, 
\begin{equation}\label{empir}
\small{\big\Vert\mathcal{W}_{p}(\mu_{n}^{\omega}, \mu)\big\Vert_{p}\leq CM_{q}^{1/q}(\mu)\times\begin{cases}
n^{-1/2p}+n^{-(q-p)/qp} & \:\mathrm{if}\:p>d/2\:\mathrm{and}\:q\neq2p\\
n^{-1/2p}\big(\log(1+n)\big)^{1/p}+n^{-(q-p)/qp} & \:\mathrm{if}\:p=d/2\:\mathrm{and}\:q\neq2p\\
n^{-1/d}+n^{-(q-p)/qp} & \:\mathrm{if}\:p\in(0, d/2)\:\mathrm{and}\:q\neq d/(d-p)
\end{cases}}.
\end{equation}

Moreover, we need the following Lemma, whose proof is postponed to Appendix \ref{appb}, for the proof of Theorem \ref{thm1}.

\begin{lem}\label{lemthm1} Assume that Assumption \ref{AssumptionI} holds for an index $p\in[2, +\infty)$. Then the coefficients $b$ and $\sigma$ satisfy the following inequalities 
\begin{align}\label{lineargrowth}
&\forall\,\big(X, (\mu_{t})_{t\in[0,T]}\big), \big(Y, (\nu_{t})_{t\in[0,T]}\big)\in\LPC\times \mathcal{C}\big([0, T], \mathcal{P}_{p}(\mathbb{R}^{d})\big),\;\forall \,t\in[0, T]\nonumber\\
&\quad\vertii{\sup_{s\in[0,t]}\left|\int_{0}^{s}\big[b(u, X_{u}, \mu_{u})-b(u, Y_{u}, \nu_{u})\big]du\right|}_{p}\leq L\int_{0}^{t}\big[\vertii{X_{u} - Y_{u}}_{p}+\vertii{\mathcal{W}_{p}(\mu_{u}, \nu_{u})}_{p}\big]du,\\
&\quad\vertii{\sup_{s\in[0,t]}\left|\int_{0}^{s}\big[\sigma(u, X_{u}, \mu_{u})-\sigma(u, Y_{u}, \nu_{u})\big]dB_{u}\right|}_{p}\leq C_{d, p, L}\Big\{\int_{0}^{t}\big[\vertii{X_{u}-Y_{u}}_{p}^{2}+\vertii{\mathcal{W}_{p}(\mu_{u}, \nu_{u})}_{p}^{2}\big]du\Big\}^{\frac{1}{2}},\nonumber
\end{align}
where $C_{d, p, L}$ is a constant  depending on $d, p, L$.
\end{lem}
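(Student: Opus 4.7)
\medskip

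\noindent\textbf{Proof proposal for Lemma \ref{lemthm1}.} The plan is to handle the two inequalities separately, each reducing to the Lipschitz hypothesis on $b$, respectively $\sigma$, from Assumption \ref{AssumptionI}(2), combined with a classical norm-swap argument (Minkowski's integral inequality for the drift, Burkholder--Davis--Gundy for the stochastic integral).

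\emph{First inequality (drift term).} For fixed $\omega$, the map $s\mapsto \int_0^s[b(u,X_u,\mu_u)-b(u,Y_u,\nu_u)]\,du$ is absolutely continuous, so the pathwise pointwise bound
\[
\sup_{s\in[0,t]}\left|\int_{0}^{s}\bigl[b(u,X_u,\mu_u)-b(u,Y_u,\nu_u)\bigr]du\right| \leq \int_{0}^{t}\bigl|b(u,X_u,\mu_u)-b(u,Y_u,\nu_u)\bigr|\,du
\]
holds. Taking the $L^p(\PP)$-norm on both sides and applying Minkowski's integral inequality (valid for $p\geq 1$) gives
\[
\Bigl\Vert\sup_{s\in[0,t]}\Bigl|\textstyle\int_0^s[b(u,X_u,\mu_u)-b(u,Y_u,\nu_u)]du\Bigr|\Bigr\Vert_p
\leq \int_0^t \bigl\Vert b(u,X_u,\mu_u)-b(u,Y_u,\nu_u)\bigr\Vert_p\,du.
\]
Then I invoke the Lipschitz continuity in $(x,\mu)$ from Assumption \ref{AssumptionI}, so that pointwise in $\omega$,
\[
|b(u,X_u,\mu_u)-b(u,Y_u,\nu_u)|\leq L\bigl(|X_u-Y_u|+\mathcal{W}_p(\mu_u,\nu_u)\bigr),
\]
and taking $L^p$-norms finishes this step.

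\emph{Second inequality (diffusion term).} Set $\Delta_u \coloneqq \sigma(u,X_u,\mu_u)-\sigma(u,Y_u,\nu_u)$. Since $(X_u)$ and $(Y_u)$ are $(\mathcal{F}_u)$-adapted and $(\mu_u),(\nu_u)$ are deterministic with continuous $\mathcal{W}_p$-dependence on $u$, the process $(\Delta_u)$ is $(\mathcal{F}_u)$-adapted and $\int_0^t \Delta_u\,dB_u$ is a well-defined local martingale. Burkholder--Davis--Gundy yields a constant $\kappa_{d,p}$ with
\[
\Bigl\Vert\sup_{s\in[0,t]}\Bigl|\textstyle\int_0^s \Delta_u\,dB_u\Bigr|\Bigr\Vert_p
\leq \kappa_{d,p}\,\Bigl\Vert\bigl(\textstyle\int_0^t \vertiii{\Delta_u}^2\,du\bigr)^{1/2}\Bigr\Vert_p
=\kappa_{d,p}\,\Bigl\Vert \textstyle\int_0^t \vertiii{\Delta_u}^2\,du\Bigr\Vert_{p/2}^{1/2}.
\]
Because $p\geq 2$, the exponent $p/2\geq 1$ allows a second application of Minkowski's integral inequality:
\[
\Bigl\Vert\textstyle\int_0^t\vertiii{\Delta_u}^2\,du\Bigr\Vert_{p/2}
\leq \int_0^t \bigl\Vert\vertiii{\Delta_u}^2\bigr\Vert_{p/2}\,du
= \int_0^t \bigl\Vert\vertiii{\Delta_u}\bigr\Vert_{p}^{2}\,du.
\]
The Lipschitz assumption gives $\vertiii{\Delta_u}\leq L(|X_u-Y_u|+\mathcal{W}_p(\mu_u,\nu_u))$, hence $\vertii{\vertiii{\Delta_u}}_p\leq L(\vertii{X_u-Y_u}_p+\vertii{\mathcal{W}_p(\mu_u,\nu_u)}_p)$. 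Squaring and using $(a+b)^2\leq 2(a^2+b^2)$ produces the integrand $2L^2\bigl(\vertii{X_u-Y_u}_p^2+\vertii{\mathcal{W}_p(\mu_u,\nu_u)}_p^2\bigr)$; absorbing $\kappa_{d,p}$ and $\sqrt{2}L$ into a single constant $C_{d,p,L}$ and taking the square root gives the claimed bound.

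\emph{Main obstacle.} None of the steps is deep, but the only nontrivial point worth verifying carefully is the measurability and integrability needed to justify BDG: one must check that $u\mapsto \mathcal{W}_p(\mu_u,\nu_u)$ is Borel (it is even continuous, since $(\mu_u),(\nu_u)\in \CPP$) and that $\int_0^T \vertiii{\Delta_u}^2\,du <+\infty$ a.s., which follows from the linear-growth consequence of the Lipschitz hypothesis together with $X,Y\in \LPC$ and $(\mu_u),(\nu_u)\in\CPP$. Once these measurability and integrability checks are in place, both inequalities reduce to a mechanical application of Minkowski's integral inequality and, for the diffusion, BDG.
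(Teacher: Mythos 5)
Your proposal is correct and follows essentially the same route as the paper's proof in Appendix \ref{appb}: the generalized Minkowski inequality for the drift term, and the Burkholder--Davis--Gundy inequality combined with the identities $\Vert\sqrt{U}\Vert_p=\Vert U\Vert_{p/2}^{1/2}$, a second Minkowski application at exponent $p/2\geq1$, and the Lipschitz bound for the diffusion term, yielding the same constant $C_{d,p,L}=\sqrt{2}\,C_{d,p}^{BDG}L$. The added measurability and integrability checks justifying BDG are a reasonable (if implicit in the paper) supplement and do not change the argument.
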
 

\begin{proof}[Proof of Theorem \ref{thm1}]

$(i)$ For every $n\in\{1, ..., N\}$, we have
\[\left|Y_{t}^{n}-\bar{X}_{t}^{\,n, N}\right|=\left| \int_{0}^{t}\big[ b(\underline{u},Y^{n}_{\underline{u}}, \bar{\mu}_{\underline{u}}) - b(\underline{u}, \bar{X}^{\,n, N}_{\underline{u}}, \bar{\mu}^{N}_{\underline{u}})\big]du + \int_{0}^{t}\big[\sigma(\underline{u}, Y^{n}_{\underline{u}}, \bar{\mu}_{\underline{u}}) - \sigma(\underline{u}, \bar{X}^{\,n, N}_{\underline{u}}, \bar{\mu}^{N}_{\underline{u}})\big]dB_{u} \right|.\]
Hence, 
\begin{flalign}
&\left\Vert \sup_{s\in[0, t]}\left| Y_{s}^{n}-\bar{X}_{s}^{n, N}\right|\right\Vert_{p}&\nonumber\\
&\quad\leq\vertii{\sup_{s\in[0, t]}\left| \int_{0}^{s}\big[b(\underline{u}, Y^{n}_{\underline{u}}, \bar{\mu}_{\underline{u}}) - b(\underline{u}, \bar{X}^{\,n, N}_{\underline{u}}, \bar{\mu}^{N}_{\underline{u}})\big]du\right|}_{p}  +\vertii{\sup_{s\in[0, t]}\left|\int_{0}^{s}\big[\sigma(\underline{u}, Y^{n}_{\underline{u}}, \bar{\mu}_{\underline{u}}) - \sigma(\underline{u}, \bar{X}^{\,n, N}_{\underline{u}}, \bar{\mu}^{N}_{\underline{u}})\big]dB_{u}\right|}_{p}\nonumber\\
&\quad\leq L\int_{0}^{t}\Big[\vertii{Y^{n}_{\underline{u}}-\bar{X}^{\,n, N}_{\underline{u}}}_{p}+\vertii{\mathcal{W}_{p}(\bar{\mu}_{\underline{u}},\bar{\mu}^{N}_{\underline{u}})}_{p}\Big]du\nonumber\\
& \quad\hspace{1cm} +C_{d, p, L}\Big[\int_{0}^{t}\Big[\vertii{Y^{n}_{\underline{u}}-\bar{X}^{\,n, N}_{\underline{u}}}_{p}^{2}+\vertii{\mathcal{W}_{p}(\bar{\mu}_{\underline{u}},\bar{\mu}^{N}_{\underline{u}})}_{p}^{2}\Big]du\Big]^{\frac{1}{2}}\hspace{1cm}\text{\big(by Lemma \ref{lemthm1}\big)}&\nonumber\\
&\quad\leq L\int_{0}^{t}\Big\Vert \sup_{v\in[0,u]} \left|Y^{n}_{v} - \bar{X}^{\,n, N}_{v}\right|\Big\Vert_{p}du+C_{d, p, L}\Big[ \int_{0}^{t}\Big\Vert \sup_{v\in[0,u]}\left|Y^{n}_{v} - \bar{X}^{\,n, N}_{v}\right|\Big\Vert^{2}_{p}du\Big]^{\frac{1}{2}}+\psi(t),&\nonumber
\end{flalign}
where 
\begin{equation}\label{defpsi}
\psi(t)=L\int_{0}^{t}\left\Vert \mathcal{W}_{p}(\bar{\mu}_{\underline{u}}, \bar{\mu}_{\underline{u}}^{N})\right\Vert_{p}du+C_{d, p, L}\Big[\int_{0}^{t}\left\Vert \mathcal{W}_{p}(\bar{\mu}_{\underline{u}}, \bar{\mu}_{\underline{u}}^{N})\right\Vert^{2}_{p}du\Big]^{\frac{1}{2}},
\end{equation}
owing to $\sqrt{a+b}\leq\sqrt{a}+\sqrt{b}$ for any $a\geq 0, b\geq0$. Then by Lemma \ref{Gronwall}, we have 
\[\Big\Vert \sup_{s\in[0, t]}\left| Y_{s}^{n}-\bar{X}_{s}^{n, N}\right|\Big\Vert_{p}\leq 2\,e^{(2L+C_{d, p, L}^{2})\,t}\psi(t).\]
Here we can apply Lemma \ref{Gronwall} since Lemma \ref{finitemomentbar} and Lemma \ref{finitmomentXN}-(b)  imply 
\[Y^{n}, \bar{X}^{n, N} \in L_{\CRD}^{p}\big(\Omega, \mathcal{F}, (\mathcal{F}_{t})_{t\geq0}, \mathbb{P}\big), \quad n=1, ..., N\]
as $Y^{n}, n=1,..., N\widesim{\text{i.i.d.}} \bar{X}$, thus the process $Y^{n}- \bar{X}^{n, N}$ is also in 
$L_{\CRD}^{p}\big(\Omega, \mathcal{F}, (\mathcal{F}_{t})_{t\geq0}, \mathbb{P}\big)$ for every $n=1, ..., N$. 

Moreover, the empirical measure $\frac{1}{N}\sum_{n=1}^{N}\delta_{(\bar{X}^{\,n, N}, Y^{n})}$ is a coupling of the random measures $\bar{\mu}^{N}$ and $\nu^{N}$. Thus 
\begin{align*}
\mathbb{E}\,\mathbb{W}_{p,t}^{p}& (\bar{\mu}^{N}, \nu^{N})=\mathbb{E}\Big[\inf_{\pi\in\Pi(\bar{\mu}^{N}, \nu^{N})}\int_{\mathcal{C}([0, T], \mathbb{R}^{d})\times\mathcal{C}([0, T], \mathbb{R}^{d})}\sup_{s\in[0,t]}\left|x_{s}-y_{s}\right|^{p}\pi(dx, dy)\Big]\\
&\leq \mathbb{E}\Big[\int_{\mathcal{C}([0, T], \mathbb{R}^{d})\times\mathcal{C}([0, T], \mathbb{R}^{d})}\sup_{s\in[0,t]}\left|x_{s}-y_{s}\right|^{p}\frac{1}{N}\sum_{n=1}^{N}\delta_{(\bar{X}^{\,n, N}, Y^{n})}(dx, dy)\Big]\\
&=\mathbb{E}\Big[\frac{1}{N}\sum_{n=1}^{N}\sup_{s\in[0,t]}\left| \bar{X}_{s}^{n, N}- Y_{s}^{n}\right|^{p}\Big]=\frac{1}{N}\sum_{n=1}^{N}\Big\Vert\sup_{s\in[0,t]}\left| \bar{X}_{s}^{n, N}- Y_{s}^{n}\right|\Big\Vert_{p}^{p}\\
&\leq \Big[2\,e^{(2L+C_{d, p, L}^{2})\,t}\psi(t)\Big]^{p}\leq\Big[2\,e^{(2L+C_{d, p, L}^{2})\,T}\psi(t)\Big]^{p}.
\end{align*}
 Lemma \ref{relationd} implies $\sup_{s\in[0,t]}\mathcal{W}_{p}^{p}(\bar{\mu}_{s}^{N}, \nu_{s}^{N})\leq \mathbb{W}_{p, t}^{p}(\bar{\mu}^{N}, \nu^{N})$. Hence, \begin{equation}\label{supw}
\Big\Vert \sup_{s\in[0,t]}\mathcal{W}_{p}(\bar{\mu}_{s}^{N}, \nu_{s}^{N})\Big\Vert_{p}\leq C_{d,p,L, T} \psi(t)
\end{equation}
with $C_{d,p,L, T}=2\,e^{(2L+C_{d, p, L}^{2})\,T}$. It follows that, 
\begin{align*}
&\Big\Vert \sup_{s\in[0,t]}\mathcal{W}_{p}(\bar{\mu}_{s}^{N}, \bar{\mu}_{s})\Big\Vert_{p}\leq \Big\Vert \sup_{s\in[0,t]}\mathcal{W}_{p}(\bar{\mu}_{s}^{N}, \nu_{s}^{N})\Big\Vert_{p}+\Big\Vert \sup_{s\in[0,t]}\mathcal{W}_{p}(\nu_{s}^{N}, \bar{\mu}_{s})\Big\Vert_{p}\\
&\quad\leq C_{d,p,L, T} \psi(t)+\Big\Vert \sup_{s\in[0,t]}\mathcal{W}_{p}(\nu_{s}^{N}, \bar{\mu}_{s})\Big\Vert_{p}\;\;\; (\text{by applying (\ref{supw})})\\
&\quad\leq \Big\Vert \sup_{s\in[0,t]}\mathcal{W}_{p}(\nu_{s}^{N}, \bar{\mu}_{s})\Big\Vert_{p}+ C_{d,p,L, T}\cdot L\int_{0}^{t}\Big\Vert \mathcal{W}_{p}(\bar{\mu}_{\underline{u}}, \bar{\mu}_{\underline{u}}^{N})\Big\Vert_{p}du+C_{d,p,L, T}\cdot C_{d, p, L}\Big[\int_{0}^{t}\Big\Vert \mathcal{W}_{p}(\bar{\mu}_{\underline{u}}, \bar{\mu}_{\underline{u}}^{N})\Big\Vert^{2}_{p}du\Big]^{\frac{1}{2}}\nonumber\\
&\hspace{1cm}(\text{by the defintion of $\psi(t)$ in (\ref{defpsi})})\\
&\quad\leq \Big\Vert \sup_{s\in[0,t]}\mathcal{W}_{p}(\nu_{s}^{N}, \bar{\mu}_{s})\Big\Vert_{p}+C_{d,p,L, T}\cdot L\int_{0}^{t}\Big\Vert\sup_{v\in[0,u]} \mathcal{W}_{p}(\bar{\mu}_{v}, \bar{\mu}_{v}^{N})\Big\Vert_{p}du\\
&\hspace{6cm}+C_{d,p,L, T}\cdot C_{d, p, L}\Big[\int_{0}^{t}\Big\Vert\sup_{v\in[0,u]} \mathcal{W}_{p}(\bar{\mu}_{v}, \bar{\mu}_{v}^{N})\Big\Vert^{2}_{p}du\Big]^{\frac{1}{2}}.
\end{align*}

\noindent Then, by Lemma \ref{Gronwall}, we obtain 
\begin{equation}\label{majemprical}
\Big\Vert\sup_{s\in[0,t]}\mathcal{W}_{p}(\bar{\mu}_{s}^{N}, \bar{\mu}_{s})\Big\Vert_{p}\leq 2e^{(2A+B^{2})T}\Big\Vert\sup_{s\in[0,t]}\mathcal{W}_{p}(\bar{\mu}_{s}, \nu_{s}^{N})\Big\Vert_{p},
\end{equation}
where $A=C_{d, p, L, T}L$ and $B=C_{d,p,L, T}\cdot C_{d, p, L}$. Finally, 
\begin{align*}
\Big\Vert\sup_{0\leq m\leq M}\mathcal{W}_{p}(\bar{\mu}_{t_{m}}^{N}, \bar{\mu}_{m})\Big\Vert_{p}&\leq 2e^{(2A+B^{2})T}\Big\Vert\sup_{s\in[0,T]}\mathcal{W}_{p}(\bar{\mu}_{s}, \nu_{s}^{N})\Big\Vert_{p}\nonumber\\
&\leq 2e^{(2A+B^{2})T}\vertii{\mathbb{W}_{p}(\bar{\mu}, \nu^{N})}_{p}\longrightarrow0 \;\;\text{ as }\;\;N\rightarrow+\infty,
\end{align*}
where the second inequality above follows from Lemma \ref{relationd}.

\smallskip

\noindent$(ii)$ If $\vertii{X_{0}}_{p+\varepsilon}<+\infty$ for some $\varepsilon>0$,  Assumption \ref{AssumptionI} holds with index $p+\varepsilon$ since for every $\mu, \nu\in\mathcal{P}_{ p+\varepsilon}(\RD), \,\mathcal{W}_p(\mu, \nu)\leq \mathcal{W}_{p+\varepsilon}(\mu, \nu)$ (see e.g. \cite[Remark 6.6]{villani2008optimal}). Then, the inequality \eqref{dnormX0} implies
\[\vertii{\bar{X}}_{p+\varepsilon}=\Big\Vert\sup_{u\in[0, T]}\left|\bar{X}_{u}\right|\Big\Vert_{p+\varepsilon}\leq C_{p, d, b, \sigma} \big(1+\vertii{X_{0}}_{p+\varepsilon}\big)<+\infty.\]
Thus $\bar{\mu}\in\mathcal{P}_{p+\varepsilon}\big(\mathcal{C}([0, T], \mathbb{R}^{d})\big)$, which implies that $\bar{\mu}_{s}\in\mathcal{P}_{p+\varepsilon}(\mathbb{R}^{d})$ for any $s\in[0, T]$. 

For any $s\in[0,T]$, $\nu_{s}^{N}$ is the empirical measure of $\bar{\mu}_{s}$. It follows from Theorem \ref{FG} that for any $s\in[0,T]$,
\begin{align}\label{empir1}
\vertii{\mathcal{W}_{p}(\nu_{s}^{N}, \bar{\mu}_{s})}_{p}&\leq CM_{p+\varepsilon}^{1/p+\varepsilon}(\bar{\mu}_{s})\nonumber\\
&\qquad\times\begin{cases}
N^{-1/2p}+N^{-\frac{\varepsilon}{p(p+\varepsilon)}} & \mathrm{if}\:p>d/2\:\mathrm{and}\:\varepsilon\neq p\\
N^{-1/2p}\big(\log(1+N)\big)^{1/p}+N^{-\frac{\varepsilon}{p(p+\varepsilon)}} &\mathrm{if}\:p=d/2\:\mathrm{and}\:\varepsilon\neq p\\
N^{-1/d}+N^{-\frac{\varepsilon}{p(p+\varepsilon)}} &\mathrm{if}\:p\in(0, d/2)\:\mathrm{and}\:p+\varepsilon\neq \frac{d}{(d-p)}
\end{cases},
\end{align}
where $C$ is a constant depending on $p, \varepsilon, d$.
Moreover, the inequality \eqref{dnormX0} implies that
\begin{align}
\sup_{s\in[0, T]}M_{p+\varepsilon}(\bar{\mu}_{s})&=\sup_{s\in[0, T]}\mathbb{E}\big[\left|X_{s}\right|^{p+\varepsilon}\big]\leq \Big\Vert\sup_{s\in[0, T]}\left| X_{s}\right|\Big\Vert_{p+\varepsilon}^{p+\varepsilon}\leq\Big[C_{p, d, b, \sigma, L,T}(1+\vertii{X_{0}}_{p+\varepsilon})\Big]^{p+\varepsilon}<+\infty.\nonumber
\end{align}
Hence, \vspace{-0.2cm}
\begin{align}\label{empir2}
\sup_{s\in[0, T]}\vertii{\mathcal{W}_{p}(\nu_{s}^{N}, \bar{\mu}_{s})}_{p}&\leq C'\times\begin{cases}
N^{-1/2p}+N^{-\frac{\varepsilon}{p(p+\varepsilon)}} & \mathrm{if}\:p>d/2\:\mathrm{and}\:\varepsilon\neq p\\
N^{-1/2p}\big(\log(1+N)\big)^{1/p}+N^{-\frac{\varepsilon}{p(p+\varepsilon)}} &\mathrm{if}\:p=d/2\:\mathrm{and}\:\varepsilon\neq p\\
N^{-1/d}+N^{-\frac{\varepsilon}{p(p+\varepsilon)}} &\mathrm{if}\:p\in(0, d/2)\:\mathrm{and}\:p+\varepsilon\neq \frac{d}{(d-p)}
\end{cases},
\end{align}
where ${C}'=C\sup_{s\in[0, T]}M^{\frac{1}{p+\varepsilon}}_{p+\varepsilon}(\bar{\mu}_{s})$ which is a constant depending on $p, \varepsilon, d, b, \sigma, L, T$ and $\vertii{X_{0}}_{p+\varepsilon}$.

Moreover, the inequality \eqref{supw} implies that
\begin{equation}\label{new}
\sup_{s\in[0, t]}\vertii{\mathcal{W}_{p}(\bar{\mu}_{s}^{N}, \nu_{s}^{N})}_{p}\leq \Big\Vert \sup_{s\in[0,t]}\mathcal{W}_{p}(\bar{\mu}_{s}^{N}, \nu_{s}^{N})\Big\Vert_{p}\leq C_{d,p,L, T}\psi(t).
\end{equation}
Then,
\begin{align*}
\sup_{s\in[0,t]}\left\Vert \mathcal{W}_{p}(\bar{\mu}_{s}^{N}, \bar{\mu}_{s})\right\Vert_{p}
&\leq  \sup_{s\in[0,t]} \left\Vert\mathcal{W}_{p}(\bar{\mu}_{s}^{N}, \nu_{s}^{N})\right\Vert_{p}+ \sup_{s\in[0,t]}\left\Vert \mathcal{W}_{p}(\nu_{s}^{N}, \bar{\mu}_{s})\right\Vert_{p}\\
&\leq C_{d,p,L, T} \psi(t)+\sup_{s\in[0,t]}\left\Vert \mathcal{W}_{p}(\nu_{s}^{N}, \bar{\mu}_{s})\right\Vert_{p}\;\;\; (\text{by applying (\ref{new})})\\
&\leq \sup_{s\in[0,t]}\left\Vert \mathcal{W}_{p}(\nu_{s}^{N}, \bar{\mu}_{s})\right\Vert_{p}+ C_{d,p,L, T}\cdot L\int_{0}^{t}\left\Vert \mathcal{W}_{p}(\bar{\mu}_{\underline{u}}, \bar{\mu}_{\underline{u}}^{N})\right\Vert_{p}du\nonumber\\
&\quad\;+C_{d,p,L, T}\cdot C_{d, p, L}\Big[\int_{0}^{t}\left\Vert \mathcal{W}_{p}(\bar{\mu}_{\underline{u}}, \bar{\mu}_{\underline{u}}^{N})\right\Vert^{2}_{p}du\Big]^{\frac{1}{2}}\:(\text{by the defintion of $\psi(t)$ in \eqref{defpsi}})\\
&\leq \sup_{s\in[0,t]}\left\Vert \mathcal{W}_{p}(\nu_{s}^{N}, \bar{\mu}_{s})\right\Vert_{p}+C_{d,p,L, T}\cdot L\int_{0}^{t}\sup_{v\in[0,u]}\left\Vert \mathcal{W}_{p}(\bar{\mu}_{v}, \bar{\mu}_{v}^{N})\right\Vert_{p}du\\
&\hspace{1cm}+C_{d,p,L, T}\cdot C_{d, p, L}\Big[\int_{0}^{t}\sup_{v\in[0,u]}\left\Vert \mathcal{W}_{p}(\bar{\mu}_{v}, \bar{\mu}_{v}^{N})\right\Vert^{2}_{p}du\Big]^{\frac{1}{2}},
\end{align*}

\noindent Then, by Lemma \ref{Gronwall}, we obtain 
\begin{equation}\label{majemprical}
\sup_{s\in[0,T]}\left\Vert\mathcal{W}_{p}(\bar{\mu}_{s}^{N}, \bar{\mu}_{s})\right\Vert_{p}\leq 2e^{(2A+B^{2})T}\sup_{s\in[0,T]}\left\Vert\mathcal{W}_{p}(\bar{\mu}_{s}, \nu_{s}^{N})\right\Vert_{p},
\end{equation}
where $A=C_{d, p, L, T}L$ and $B=C_{d,p,L, T}\cdot C_{d, p, L}$. Finally, it follows from (\ref{empir2}) that 
\begin{align}\label{sup1}
\sup_{0\leq m\leq M}&\vertii{\mathcal{W}_{p}(\bar{\mu}_{t_{m}}^{N}, \bar{\mu}_{m})}_{p}\leq \sup_{s\in[0,T]}\left\Vert\mathcal{W}_{p}(\bar{\mu}_{s}^{N}, \bar{\mu}_{s})\right\Vert_{p}\nonumber\\
&\leq \widetilde{C}\times\begin{cases}
N^{-\frac{1}{2p}}+N^{-\frac{\varepsilon}{p(p+\varepsilon)}} & \:\mathrm{if}\:p>d/2\:\mathrm{and}\:\varepsilon\neq p\\
N^{-\frac{1}{2p}}\big[\log(1+N)\big]^{\frac{1}{p}}+N^{-\frac{\varepsilon}{p(p+\varepsilon)}} & \:\mathrm{if}\:p=d/2\:\mathrm{and}\:\varepsilon\neq p\\
N^{-\frac{1}{d}}+N^{-\frac{\varepsilon}{p(p+\varepsilon)}} & \:\mathrm{if}\:p\in(0, d/2)\:\mathrm{and}\:p+\varepsilon\neq \frac{d}{(d-p)}
\end{cases},
\end{align}
where $\widetilde{C}$ is a constant depending on $p,\varepsilon, d, b, \sigma, L, T$ and $\vertii{X_{0}}_{p+\varepsilon}$.

By the definition of $\psi(t)$ in \eqref{defpsi}, we have 
\begin{align}
\psi(t)&=L\int_{0}^{t}\left\Vert \mathcal{W}_{p}(\bar{\mu}_{\underline{u}}, \bar{\mu}_{\underline{u}}^{N})\right\Vert_{p}du+C_{d, p, L}\Big[\int_{0}^{t}\left\Vert \mathcal{W}_{p}(\bar{\mu}_{\underline{u}}, \bar{\mu}_{\underline{u}}^{N})\right\Vert^{2}_{p}du\Big]^{\frac{1}{2}},\nonumber\\
&\leq LT\sup_{u\in[0, T]}\left\Vert \mathcal{W}_{p}(\bar{\mu}_{\underline{u}}, \bar{\mu}_{\underline{u}}^{N})\right\Vert_{p}+C_{d,p,L}\sqrt{T}\cdot \sup_{u\in[0, T]}\left\Vert \mathcal{W}_{p}(\bar{\mu}_{\underline{u}}, \bar{\mu}_{\underline{u}}^{N})\right\Vert_{p}\leq C_{d,p,L,T}\sup_{u\in[0, T]}\left\Vert \mathcal{W}_{p}(\bar{\mu}_{\underline{u}}, \bar{\mu}_{\underline{u}}^{N})\right\Vert_{p}\nonumber
\end{align}
so that \eqref{supw} and \eqref{sup1} imply that 
\begin{align}
\left\Vert \sup_{s\in[0,T]}\mathcal{W}_{p}(\bar{\mu}_{s}^{N}, \nu_{s}^{N})\right\Vert_{p}&\leq C_{d,p,L, T}\sup_{u\in[0, T]}\left\Vert \mathcal{W}_{p}(\bar{\mu}_{\underline{u}}, \bar{\mu}_{\underline{u}}^{N})\right\Vert_{p}\nonumber\\
&\leq  C\times\begin{cases}
N^{-\frac{1}{2p}}+N^{-\frac{\varepsilon}{p(p+\varepsilon)}} & \:\mathrm{if}\:p>d/2\:\mathrm{and}\:\varepsilon\neq p\\
N^{-\frac{1}{2p}}\big[\log(1+N)\big]^{\frac{1}{p}}+N^{-\frac{\varepsilon}{p(p+\varepsilon)}} & \:\mathrm{if}\:p=d/2\:\mathrm{and}\:\varepsilon\neq p\\
N^{-\frac{1}{d}}+N^{-\frac{\varepsilon}{p(p+\varepsilon)}} & \:\mathrm{if}\:p\in(0, d/2)\:\mathrm{and}\:p+\varepsilon\neq \frac{d}{(d-p)}
\end{cases}\nonumber\end{align}
where $C$ is a constant depending on $p,\varepsilon, d, b, \sigma, L, T$ and $\vertii{X_{0}}_{p+\varepsilon}$.
\end{proof}

\section{Quantization-based schemes and their error analyses}\label{qscheme}

This section is devoted to the error analyses of the quantization-based schemes \eqref{Eeq}, \eqref{Geq} and \eqref{Feq}. We start with a review of  optimal quantization, as well as its connection with the $K$-means clustering. We also present and  prove Theorem \ref{thm:quadbasedscheme}, offering an $L^2$-error analysis of the quantization-based scheme \eqref{Eeq} in Subsection \ref{reviewoq}. Next, in Subsection \ref{recurq}, we provide computational details of transition probabilities \eqref{Geq} for the recursive quantization scheme  and show how to optimally compute the quantizer $x^{(m)}$ by integrating Lloyd's algorithm to reduce the quantization error. Finally,  in Subsection \ref{FtoD}, we establish Proposition \ref{quanNparti}, the error analysis of the hybrid particle-quantization scheme.

\subsection{Preliminaries on the optimal quantization and the error analysis of the quantization-based scheme \eqref{Eeq}}\label{reviewoq}

In this subsection, $X: (\Omega, \mathcal{F}, \mathbb{P})\rightarrow \big(\RD, \mathcal{B}(\RD)\big)$ is a random variable having  $p$-th finite moment, $p\geq 1$, and its probability distribution is denoted by $\mu$. Let $K \in\mathbb{N}^{*}$ be the fixed quantization level.  
 Recall that the quantization error function $e_{K, p}(\mu, \cdot)$ of $\mu$ at level $K$ and order $p$ is defined by \eqref{def:quanti-error-function}. 
For a quantizer $x=(x_1, ..., x_K)$ such that $x_i\neq x_j$, $i\neq j$ and for $(V_k(x))_{1\leq k\leq K}$ a Vorono\"i partition generated by $x$ defined in \eqref{defvoi}, the quantization error of $x$ satisfying 
\begin{equation}\label{sumvoi}
e^{p}_{K, p}(\mu, x)=\sum_{k=1}^{K}\int_{V_k(x)}|\xi-x_k|^{p}\mu(d\xi).
\end{equation}
It is obvious that the error value does not depend on the choice of Vorono\"i partition. Moreover, still considering this Vorono\"i partition $(V_k(x))_{1\leq k\leq K}$, we have (see e.g. \cite[Lemma 3.4]{graf2000foundations} \footnote{The statement in \cite[Lemma 3.4]{graf2000foundations} is established only for an optimal quantizer. However, its proof is also valid for an arbitrary quantizer. })
\begin{equation}\label{eq:diff-repre}
e_{K, p}(\mu, x)=\big\Vert X-\widehat{X}^{x}\big\Vert_p=\mathcal{W}_p(\mu, \widehat{\mu}^{x})
\end{equation}
where $\widehat{X}^{x}\coloneqq \Proj_x (X)$ and $\widehat{\mu}^{x}\coloneqq \mu\circ \Proj_x^{-1}=\mathcal{L}(\widehat{X}^{x})$  with the projection function $\Proj_x$ defined by \eqref{defprojfun}.  

Recall that the optimal quantizer $x^{*}=(x^{*}_1, ..., x^{*}_K)$ at level $K$ and order $p$ for $\mu$ is defined by \eqref{defopquantizer}. 
The existence of such an optimal quantizer has been proved in \cite{pollard1982quantization}. There exist few results in the literature for the uniqueness of the optimal quantizer (see e.g. \cite{kieffer1983uniqueness}), but in practice, we only need one optimal quantizer for a further simulation as all optimal quantizers have the same quantization error by \eqref{defopquantizer}. 

\smallskip
Now we recall several fundamental properties of optimal quantization.  

\begin{prop}\label{propclassical} Let $K\in\mathbb{N}^{*}$ be the  quantization level and let $p\geq 1$. We consider an $\RD$-valued random variable $X$ having probability distribution $\mu\in\mathcal{P}_p(\RD)$. Let $x^{*}=(x_1^{*}, ..., x_K^{*})$ be an optimal quantizer of $X$ at level $K$ and order $p$.\begin{enumerate}[(1)]
\item (see \cite[Theorem 4.1 and Theorem 4.12]{graf2000foundations}) Assume that the support of $\mu$ satisfies card(supp$(\mu))\geq K$, where card(supp$(\mu))$ denotes the cardinality of the support of $\mu$. Then any $K$-level optimal quantizer $x^{*}$ contains $K$ different points, i.e. for every $i,j\in\{1, \dots, K\}$ such that $i\neq j$, we have $x_i^*\neq x_j^*.$

\item (Zador's theorem, see \cite{MR2398762} and \cite[Theorem 5.2]{pages2018numerical}) If $\mu\in \mathcal{P}_{p+\varepsilon}(\RD)$ for some $\varepsilon>0$, the optimal error defined by \eqref{def:optimalerror} has the following non-asymptotic upper bound for every quantization level $K\geq 1$, 
\begin{equation}\label{zador}
  e_{p, K}^{*}(X)=  e_{p, K}^{*}(\mu)\leq C_{d, p, \varepsilon}\,\mathcal{M}_{p+\varepsilon}(\mu)K^{-\frac{1}{d}},
\end{equation}
where  $C_{d, p, \varepsilon} \ge 0$ is a constant depending only on $d$, $p$ and $\varepsilon$, and where
\begin{align} \label{eq:V}
\mathcal M_p(\mu)=\mathcal M_p(X) \coloneqq \inf_{a \in \RR^d} \mathbb \Vert X - a\Vert_p.
\end{align} 
\item (Optimal discrete representation, see \cite[Lemma 3.4]{graf2000foundations} and \cite[Proposition 5.1-(b)]{pages2018numerical}) Let $\widehat{X}^{x^*}=\Proj_{\,x^*}(X)$ and let $\widehat{\mu}^{\,x^*}\coloneqq \mu\circ \Proj_{\,x^*}^{-1}=\mathcal{L}(\widehat{X}^{x^*})$. Then 
\begin{align}
&\big\Vert X-\widehat{X}^{x^*}\big\Vert_p  =\min\Big\{ \Vert X-Y\Vert_p\;\Big|\;Y:(\Omega, \mathcal{F}, \PP)\rightarrow \big(\RD, \mathcal{B}(\RD)\big)\text{ random variable such that}\nonumber\\
&\hspace{6cm} \text{the support of $Y$ has at most $K$ points} \Big\}\nonumber\\
&\text{and}\quad \mathcal{W}_p(\mu, \widehat{\mu}^{\,x^*})=\min\Big\{\mathcal{W}_{p}(\mu, \nu)\;\Big|\;\nu\in\mathcal{P}_p(\RD) \text{ such that } \mathrm{card(supp(}\nu\mathrm{))}\leq K\Big\}.\nonumber
\end{align}
\item  (Stationary property when $p=2$, see e.g. \cite[Proposition 5]{pages2018numerical}) For the quadratic optimal quantization (i.e. $p=2$),    any optimal quantizer $x^{*}$ is stationary in the sense that  
\begin{equation}\label{stationary}
\EE \big[\,X\,\big|\,\widehat{X}^{\,x^*}\,\big]=\widehat{X}^{\,x^*}.
\end{equation}

\item (Consistency of the optimal quantization, see \cite[Theorem 9]{pollard1982quantization} and \cite[Theorem 4 and Appendix A]{liu2018convergence}) Consider a probability distribution sequence $\mu_n, \, \mu\in\mathcal{P}_2(\RD),\,n\geq 1$, such that $\mathcal{W}_2(\mu_n, \mu)\rightarrow 0$ when $n\rightarrow 0$. For every $n\geq 1$, let $x^{(n)}$ be a quadratic optimal quantizer of $\mu_n$. Then any limiting point of $(x^{(n)})_{n\geq1}$ is a quadratic optimal quantizer of $\mu$. Moreover, we have 
\begin{equation}\label{consistencyopterr}
\big(e_{K, 2}^{*}(\mu_n)\big)^2-\big(e_{K, 2}^{*}(\mu)\big)^2\leq 4e_{K, 2}^{*}(\mu)\, \mathcal{W}_2(\mu_n, \mu)+4  \mathcal{W}_2^2(\mu_n, \mu). 
\end{equation}
\end{enumerate}
\end{prop}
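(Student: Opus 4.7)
The plan is to handle the five items separately, since they isolate independent structural features of optimal quantizers. For (1), I argue by contradiction: suppose $x_i^{*}=x_j^{*}$ for some $i\neq j$. Since $\mathrm{card}(\mathrm{supp}(\mu))\geq K$, pick $z\in\mathrm{supp}(\mu)$ distinct from $\{x_1^{*},\dots,x_K^{*}\}$ and a ball $B(z,r)$ disjoint from these centers with $\mu(B(z,r))>0$. Replacing $x_i^{*}$ by $z$ in the quantizer --- note that $x_j^{*}=x_i^{*}$ is still present through position $j$ --- strictly enlarges the set of distinct centers, so $\min_k|\xi-\tilde{x}_k|\leq \min_k|\xi-x_k^{*}|$ holds pointwise with strict inequality on $B(z,r)$, contradicting optimality. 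For (2) I would use the standard non-asymptotic Pierce/Zador construction: cover $\RD$ with a cubic grid of mesh $\varepsilon_K$, place the $K$ centers at the cells concentrating most of the $\mu$-mass, control the tail via $\mathcal{M}_{p+\varepsilon}(\mu)$ and H\"older, and optimise $\varepsilon_K$; balancing yields the rate $K^{-1/d}$ with constant depending only on $d,p,\varepsilon$.

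For (3), the identity \eqref{eq:diff-repre} gives $\mathcal{W}_p(\mu,\widehat\mu^{\,x^{*}})=\|X-\widehat{X}^{\,x^{*}}\|_p=e_{K,p}(\mu,x^{*})=e_{K,p}^{*}(\mu)$. For any random variable $Y$ whose support has at most $K$ points $\{y_1,\dots,y_K\}$, we have $|X-Y|\geq \min_k |X-y_k|$ pointwise, whence $\|X-Y\|_p\geq e_{K,p}(\mu,y)\geq e_{K,p}^{*}(\mu)$; the version for measures $\nu\in\PPRD$ supported on at most $K$ points follows by applying the same pointwise inequality under any coupling $\pi\in\Pi(\mu,\nu)$. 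For (4), fix $i$ with $\mu\big(V_{i}(x^{*})\big)>0$ and let $\bar{x}_i=\int_{V_{i}(x^{*})}\xi\,\mu(d\xi)/\mu(V_{i}(x^{*}))$ be the cell centroid, the unique minimiser of $x\mapsto \int_{V_{i}(x^{*})}|\xi-x|^{2}\mu(d\xi)$. Let $\tilde{x}$ be obtained from $x^{*}$ by replacing only $x_i^{*}$ with $\bar{x}_i$. Using that $e_{K,2}^{2}(\mu,\tilde{x})\leq \sum_{k}\int_{V_{k}(x^{*})}|\xi-\tilde{x}_k|^{2}\mu(d\xi)$ for any Borel partition, together with the centroid inequality $\int_{V_i(x^{*})}|\xi-\bar x_i|^{2}\mu(d\xi)\leq \int_{V_i(x^{*})}|\xi-x_i^{*}|^{2}\mu(d\xi)$ and $\tilde x_k=x_k^{*}$ for $k\neq i$, one obtains $e_{K,2}(\mu,\tilde{x})\leq e_{K,2}(\mu,x^{*})$. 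Optimality forces equality, and uniqueness of the centroid minimiser gives $x_i^{*}=\bar{x}_i=\EE[X\mid \widehat{X}^{\,x^{*}}=x_i^{*}]$.

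For (5), the quantitative bound \eqref{consistencyopterr} follows from using $x^{*}$, an optimal quantizer of $\mu$, as a suboptimal quantizer for $\mu_n$: for a quadratic-optimal coupling $(X,X_n)$ of $(\mu,\mu_n)$ one gets
\begin{align*}
e_{K,2}^{*}(\mu_n)\leq\bigl(\EE\min_{k}|X_n-x_k^{*}|^{2}\bigr)^{1/2}\leq\bigl(\EE(\min_{k}|X-x_k^{*}|+|X-X_n|)^{2}\bigr)^{1/2}\leq e_{K,2}^{*}(\mu)+\mathcal{W}_{2}(\mu_n,\mu),
\end{align*}
and squaring yields \eqref{consistencyopterr}. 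For the limit-point claim, the same inequality gives $\sup_n e_{K,2}^{*}(\mu_n)<\infty$, and the strict inequality $e_{K,2}^{*}(\mu)<e_{K-1,2}^{*}(\mu)$ --- guaranteed by $\mathrm{card}(\mathrm{supp}(\mu))\geq K$ together with part (1) --- rules out any coordinate of $x^{(n)}$ escaping to infinity along any subsequence, so $(x^{(n)})$ is relatively compact in $(\RD)^{K}$. Continuity of $(y,\mu)\mapsto e_{K,2}(\mu,y)$ along $\mathcal{W}_2$-convergence together with \eqref{consistencyopterr} then gives $e_{K,2}(\mu,x^{(\infty)})=\lim_n e_{K,2}(\mu_n,x^{(n)})=\lim_n e_{K,2}^{*}(\mu_n)=e_{K,2}^{*}(\mu)$ for any cluster point $x^{(\infty)}$, so $x^{(\infty)}$ is optimal for $\mu$. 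The step I expect to be most delicate is this compactness argument: ruling out escape to infinity requires combining the strict drop of the optimal error from $K-1$ to $K$ centers with the $\mathcal{W}_2$-convergence of the measures, rather than a naive moment bound which is not directly available from the hypothesis.
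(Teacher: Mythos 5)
The paper does not prove Proposition \ref{propclassical} at all: it is stated as a recollection of classical facts, each item being delegated to the literature (Graf--Luschgy, Pierce/Zador via \cite{MR2398762}, Pag\`es, Pollard, Liu--Pag\`es). So there is no internal proof to compare against; your proposal supplies the standard arguments that those references contain. Your treatments of (1), (3), (4) and (5) are correct and are essentially the canonical ones: the merge-and-relocate contradiction for distinctness (the strict improvement on a small ball around a support point $z$ with $2r<\min_k|z-x_k^*|$ is exactly right); the pointwise bound $|X-Y|\geq\min_k|X-y_k|$ combined with \eqref{eq:diff-repre} for the optimal discrete representation; the single-coordinate centroid replacement plus uniqueness of the quadratic minimiser for stationarity (and the zero-mass cells are harmless since \eqref{stationary} is an a.s.\ identity of conditional expectations); and the suboptimal-quantizer/Minkowski bound $e^*_{K,2}(\mu_n)\leq e^*_{K,2}(\mu)+\mathcal{W}_2(\mu_n,\mu)$, which after squaring gives \eqref{consistencyopterr} with room to spare.

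Two caveats. First, item (2) is the only place where your argument is a sketch rather than a proof: the non-asymptotic Pierce--Zador bound \eqref{zador} for measures with unbounded support is a genuinely delicate theorem, and the cubic-grid-plus-tail-control outline you give, while the right flavour on a compact set, conceals the real work (a dyadic shell decomposition or a random-coding argument is needed to get the constant to depend only on $d,p,\varepsilon$ and to produce the factor $\mathcal{M}_{p+\varepsilon}(\mu)$ rather than a raw moment). Second, in item (5) the relative-compactness discussion is superfluous for the claim as stated, which only concerns limiting points that are assumed to exist; moreover the strict inequality $e^*_{K,2}(\mu)<e^*_{K-1,2}(\mu)$ you invoke requires $\mathrm{card}(\mathrm{supp}(\mu))\geq K$, a hypothesis item (5) does not make (for $\mu$ supported on fewer than $K$ points, coordinates of optimal quantizers can indeed escape to infinity). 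Your final continuity argument, which is all that is actually needed, is correct as written.
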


From a numerical point of view, the main idea of the optimal quantization is to use $\widehat{X}^{\,x^*}=\Proj_{\,x^*}(X)$ (\textit{respectively,} $\widehat{\mu}^{\,x^*}=\mu\,\circ\, \Proj_{\,x^*}^{-1}=\mathcal{L}(\widehat{X}^{\,x^*})$) as an approximation of the target random variable $X$ (\textit{resp.} the target probability distribution $\mu$).  Proposition \ref{propclassical}-(3) shows that $\widehat{X}^{\,x^*}$ (\textit{respectively} $\widehat{\mu}^{\,x^*}$) is the closest discrete representation of $X$ (\textit{resp.} of $\mu$) with respect to the $L^p$-norm (\textit{resp.} the Wasserstein distance $\mathcal{W}_p$), among all random variables (\textit{resp.} probability distributions) having a support of at most $K$ points.
Furthermore, the Proposition \ref{propclassical}-(5) shows the connection between the optimal quantization and the \textit{$K$-means clustering} in unsupervised learning (see e.g. \cite{pollard1982quantization} and \cite{liu2018convergence}). Consider a sample $\{\eta_1, ..., \eta_n\}\subset \RD$, the \textit{$K$-means clustering} is essentially to find an optimal quantizer with respect to the empirical measure $\bar\mu_n\coloneqq\frac{1}{n}\sum_{i=1}^{n}\delta_{\eta_i}$ over the sample. In the unsupervised learning context, such an optimal quantizer is called \textit{cluster centers} and $\eta_1, ..., \eta_n$ are often considered as i.i.d. samples having probability distribution $\mu$ so that $\mathcal{W}_p(\bar\mu_n, \mu)\rightarrow 0$ a.s. if $\mu\in\mathcal{P}_p(\RD)$ (see e.g. \cite[Thereom 7]{pollard1982quantization}). 

\smallskip
The following theorem shows the $L^2$-error analysis of the quantization-based scheme  \eqref{Eeq}.

\begin{thm}[Error analysis of the quantization-based scheme]\label{thm:quadbasedscheme}  Assume that Assumption \ref{AssumptionI} is satisfied with $p=2$.
 Set $M\in \mathbb{N}^{*}$ and $h=\frac{T}{M}$ for the temporal discretization. Let $(\bar{X}_{t_m})_{0\leq m \leq M}$ be random variables defined by the Euler scheme \eqref{Ceq}. Consider a fixed $K$-level quantizer sequence $x^{(m)}=(x^{(m)}_1, ..., x^{(m)}_K), \; 0\leq m \leq M$ and define $(\widetilde{X}_{t_m})_{0\leq m \leq M}$, $(\widehat{X}_{t_m})_{0\leq m \leq M}$ by the quantization-based scheme \eqref{Eeq}. 
\begin{enumerate}[$(i)$]
\item For every $m\in\{1, ..., M\}$, we have \vspace{-0.2cm} 
\begin{equation}\label{errortheoq}
\vertii{\bar{X}_{t_{m}}-\widehat{X}_{t_{m}}}_{2}\leq \sum_{j=0}^{m}\big[1+2Lh (1+Lh+Lq)\big]^{j}\cdot \Xi_{m-j},\vspace{-0.2cm}
\end{equation}
where for every $j\in\{0, ..., m\}$, $\Xi_{j}$ denotes the quadratic quantization error of the $j$-th step, that is,  $\Xi_{j}=\big\Vert \widetilde{X}_{t_j}-\widehat{X}_{t_j}\big\Vert_2.$
\item If moreover, there exists $\varepsilon>0$ such that $\Vert X_0\Vert_{2+\varepsilon}<+\infty$  and if for every $0\leq m \leq M$, $x^{(m)}$ is a quadratic optimal quantizer of $\widetilde{X}_{t_m}$,  we have 
\begin{equation}\label{errortheoq2}
\vertii{\bar{X}_{t_{m}}-\widehat{X}_{t_{m}}}_{2}\leq C_{b, \sigma, L, T, d, \varepsilon, q, \Vert X_0\Vert_{2+\varepsilon}}\cdot K^{-1/d}\Big(\sum_{j=0}^{m}\big[1+2Lh (1+Lh+Lq)\big]^{j}\Big).
\end{equation}
\end{enumerate}
\end{thm}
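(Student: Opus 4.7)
My strategy for $(i)$ is an induction in $m$ that extracts a one-step recursion
\[
\|A_{m+1}\|_2 \leq \alpha\,\|A_m\|_2 + \Xi_{m+1}, \qquad A_m \coloneqq \bar{X}_{t_m}-\widehat{X}_{t_m},\qquad \alpha \coloneqq 1+2Lh(1+Lh+Lq),
\]
with base case $\|A_0\|_2 = \|X_0-\Proj_{x^{(0)}}(X_0)\|_2 = \Xi_0$, and then iterates. The natural split $\bar{X}_{t_{m+1}} - \widehat{X}_{t_{m+1}} = (\bar{X}_{t_{m+1}}-\widetilde{X}_{t_{m+1}}) + (\widetilde{X}_{t_{m+1}}-\widehat{X}_{t_{m+1}})$ has a second summand with $L^2$ norm exactly $\Xi_{m+1}$ by definition. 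Subtracting \eqref{Ceq} from the second line of \eqref{Eeq} writes the first summand as $A_m + h\,D_m + \sqrt{h}\,E_m Z_{m+1}$, where $D_m$ and $E_m$ denote the differences of $b$ and $\sigma$ at $(t_m,\bar{X}_{t_m},\bar{\mu}_{t_m})$ versus $(t_m,\widehat{X}_{t_m},\widehat{\mu}_{t_m})$, both $\mathcal{F}_{t_m}$-measurable.

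The key step is an orthogonality argument in $L^2$: since $Z_{m+1}\independent\mathcal{F}_{t_m}$ with $\mathbb{E}Z_{m+1}=0$ and $\mathbb{E}[Z_{m+1}Z_{m+1}^{\top}]=\mathbf{I}_q$, the cross term vanishes after conditioning on $\mathcal{F}_{t_m}$, leaving
\[
\|\bar{X}_{t_{m+1}}-\widetilde{X}_{t_{m+1}}\|_2^2 = \mathbb{E}|A_m+hD_m|^2 + h\,\mathbb{E}\|E_m\|_{HS}^2.
\]
Lipschitz continuity from Assumption~\ref{AssumptionI}, combined with the coupling bound $\mathcal{W}_2(\bar{\mu}_{t_m},\widehat{\mu}_{t_m})\leq\|A_m\|_2$, gives $\|D_m\|_2\leq 2L\|A_m\|_2$ and $\bigl\|\vertiii{E_m}\bigr\|_2\leq 2L\|A_m\|_2$. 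Using $\|E_m\|_{HS}^2\leq q\,\vertiii{E_m}^2$ (valid for a $d\times q$ matrix) and Minkowski then yields
\[
\|\bar{X}_{t_{m+1}}-\widetilde{X}_{t_{m+1}}\|_2^2 \leq (1+2Lh)^2\|A_m\|_2^2 + 4qhL^2\|A_m\|_2^2 \leq \alpha^2\|A_m\|_2^2,
\]
the last inequality being a direct expansion: $\alpha^2-[(1+2Lh)^2+4qhL^2] = [2Lh(1+Lh+Lq)]^2\geq 0$. The recursion closes by the triangle inequality, and induction produces \eqref{errortheoq}.

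For $(ii)$, I plug Zador's non-asymptotic bound (Proposition~\ref{propclassical}-(2)) into the quadratic optimal quantization error, $\Xi_m = e_{K,2}^*(\widetilde{X}_{t_m}) \leq C_{d,\varepsilon}\,\mathcal{M}_{2+\varepsilon}(\widetilde{X}_{t_m})\,K^{-1/d}$, so it suffices to bound $\|\widetilde{X}_{t_m}\|_{2+\varepsilon}$ uniformly in $m$ and $M$. The essential ingredient is the stationarity property of the quadratic optimal quantizer (Proposition~\ref{propclassical}-(4)): applying Jensen's inequality to the convex function $|\cdot|^{2+\varepsilon}$ inside the identity $\mathbb{E}[\widetilde{X}_{t_m}\mid\widehat{X}_{t_m}]=\widehat{X}_{t_m}$ yields $\|\widehat{X}_{t_m}\|_{2+\varepsilon}\leq\|\widetilde{X}_{t_m}\|_{2+\varepsilon}$. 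Injecting this into the update equation for $\widetilde{X}_{t_{m+1}}$, exploiting the linear growth of $b,\sigma$ and the independence of $Z_{m+1}$, a standard discrete Gronwall argument on the squared moment produces the desired uniform bound $\sup_{0\leq m\leq M}\|\widetilde{X}_{t_m}\|_{2+\varepsilon} \leq C_{b,\sigma,L,T,d,\varepsilon,q,\|X_0\|_{2+\varepsilon}}$; substituting into \eqref{errortheoq} then produces \eqref{errortheoq2}.

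The main obstacle I anticipate is the precise constant tracking in the one-step estimate, especially making the Brownian dimension $q$ appear exactly as in $1+2Lh(1+Lh+Lq)$; this hinges on carefully separating $\|E_m\|_{HS}$ from $\vertiii{E_m}$ rather than folding everything into a generic constant. A secondary point worth emphasising is that the sum over $j$ in \eqref{errortheoq} \emph{cannot} be collapsed into a single exponential prefactor independently of $M$ — this accumulation of per-step quantization errors is precisely the sub-optimality the authors flag in the discussion below \eqref{Eeq}.
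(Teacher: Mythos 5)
Your proposal is correct and follows essentially the same route as the paper's proof: the same decomposition $\bar{X}_{t_{m+1}}-\widetilde{X}_{t_{m+1}}=A_m+hD_m+\sqrt{h}E_mZ_{m+1}$ with the cross term killed by conditioning on $\mathcal{F}_{t_m}$, the same Lipschitz/coupling bounds yielding the factor $1+4Lh(1+Lh+Lq)$ on the squared norm (your identity $\alpha^2-(1+2u)=u^2$ is just the paper's $\sqrt{1+2x}\le 1+x$ in disguise), and for $(ii)$ the same combination of Zador's bound with the stationarity-plus-Jensen argument $\Vert\widehat{X}_{t_m}\Vert_{2+\varepsilon}\le\Vert\widetilde{X}_{t_m}\Vert_{2+\varepsilon}$ and a moment recursion. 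The only cosmetic differences are your use of the Hilbert--Schmidt norm with $\Vert E_m\Vert_{HS}^2\le q\vertiii{E_m}^2$ where the paper bounds $\vertiii{E_m}^2\,\mathbb{E}|Z_{m+1}|^2$ directly, and Minkowski on $A_m+hD_m$ in place of expanding the inner product.
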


\vspace{-0.5cm}

\begin{proof}[Proof of Theorem \ref{thm:quadbasedscheme}]$(i)$ 
To simplify the notation, we will denote by \[\bar{b}_{m}\coloneqq b(t_m, \bar{X}_{t_{m}}, \bar{\mu}_{t_{m}}),\: \bar{\sigma}_{m}\coloneqq\sigma(t_m, \bar{X}_{t_{m}}, \bar{\mu}_{t_{m}}),\: \widehat{b}_{m}\coloneqq b(t_m, \widehat{X}_{t_{m}}, \widehat{\mu}_{t_{m}}),\: \widehat{\sigma}_{m}\coloneqq\sigma(t_m, \widehat{X}_{t_{m}}, \widehat{\mu}_{t_{m}}).\] 
The definitions of $\bar{X}_{t_{m}}$, $\widetilde{X}_{t_{m}}$ and  in  \eqref{Ceq} and \eqref{Eeq} directly imply that
\[\left|\bar{X}_{t_{m+1}}-\widetilde{X}_{t_{m+1}}\right|=\left| \big(\bar{X}_{t_{m}}-\widehat{X}_{t_{m}}\big)+h\big( \bar{b}_{m}-\widehat{b}_{m}\big) +\sqrt{ h}\big( \bar{\sigma}_{m}-\widehat{\sigma}_{m}\big)Z_{m+1}\right|.\]
Hence,
\begin{align}\label{theoq}
&\EE \Big[\big|\bar{X}_{t_{m+1}}-\widetilde{X}_{t_{m+1}}\big|^{2}\Big]\nonumber\\
&=\EE \Big[\big|\bar{X}_{t_{m}}-\widehat{X}_{t_{m}}\big|^{2}\Big]+ h^{2}\EE \Big[\big|\bar{b}_{m}-\widehat{b}_{m}\big|^{2}\Big]+2h\, \EE\Big[ \big\langle \bar{X}_{t_{m}}-\widehat{X}_{t_{m}}\,,\, \bar{b}_{m}-\widehat{b}_{m}\big\rangle\Big]\nonumber\\
&\;\;\;\;+h\EE \Big[\big |\big(\bar{\sigma}_{m}-\widehat{\sigma}_{m}\big)Z_{m+1}\big|^{2}\Big]+2\sqrt{h}\,\EE\Big[\big\langle\big(\bar{X}_{t_{m}}-\widehat{X}_{t_{m}}\big) + h\big(\bar{b}_{m}-\widehat{b}_{m}\big) \,,\, \big(\bar{\sigma}_{m}-\widehat{\sigma}_{m}\big)Z_{m+1}\big\rangle\Big].
\end{align}
Let $\mathcal{F}_0$ denote the $\sigma$-algebra generated by $X_{0}$. For every $m\in\{1, ..., M\}$, we define $\mathcal{F}_{m}$ the $\sigma$-algebra generated by $X_{0}, Z_{1}, ..., Z_{m}$. Then, as $Z_{m+1}$ is independent of $\mathcal{F}_{m}$ and $\bar{X}_{t_{m}}, \widehat{X}_{t_{m}}, \bar{b}_{m}, \widehat{b}_{m}, \bar{\sigma}_{m}, \widehat{\sigma}_{m}$ are $\mathcal{F}_{m}$-measurable, we have
\begin{align}
\EE\Big[\Big\langle\big(\bar{X}_{t_{m}}&-\widehat{X}_{t_{m}}\big) + h\big(\bar{b}_{m}-\widehat{b}_{m}\big) \,,\,\big(\bar{\sigma}_{m}-\widehat{\sigma}_{m}\big)Z_{m+1}\Big\rangle\Big]\nonumber\\
&=\EE \Big\{\EE \Big[\big[\big(\bar{X}_{t_{m}}-\widehat{X}_{t_{m}}\big) + h\big(\bar{b}_{m}-\widehat{b}_{m}\big)\big]^{\top}\big(\bar{\sigma}_{m}-\widehat{\sigma}_{m}\big)Z_{m+1}\big| \,\mathcal{F}_{m}\,\Big]\Big\}\nonumber\\
&=\EE \Big\{\Big[\big[\big(\bar{X}_{t_{m}}-\widehat{X}_{t_{m}}\big) + h\big(\bar{b}_{m}-\widehat{b}_{m}\big)\big]^{\top}\big(\bar{\sigma}_{m}-\widehat{\sigma}_{m}\big)\Big]\EE \big[Z_{m+1}\big] \Big\}=0. \nonumber
\end{align}

Moreover, Assumption \ref{AssumptionI} implies that
\begin{align}
&\mathbb{E}\Big[\big|\bar{b}_{m}-\widehat{b}_{m}\big|^{2}\Big]\leq 2L^{2}\Big[\mathbb{E}\Big[\big|\bar{X}_{t_{m}}-\widehat{X}_{t_{m}}\big|^{2}\Big]+\mathcal{W}_{2}^{2}(\bar{\mu}_{m}, \widehat{\mu}_{m})\Big]\leq 4L^{2}\,\EE \Big[\big|\bar{X}_{t_{m}}-\widehat{X}_{t_{m}}\big|^{2}\Big]\nonumber&
\end{align}
and
\begin{align}
&\EE\Big[ \big\langle \bar{X}_{t_{m}}-\widehat{X}_{t_{m}}\,,\, \bar{b}_{m}-\widehat{b}_{m}\big\rangle\Big]\leq \vertii{\bar{X}_{t_{m}}-\widehat{X}_{t_{m}}}_{2}\vertii{\bar{b}_{m}-\widehat{b}_{m}}_{2}\leq 2L \,\EE\Big[ \big|\bar{X}_{t_{m}}-\widehat{X}_{t_{m}}\big|^{2}\Big],\nonumber&
\end{align}
as well as 
\begin{align}
\mathbb{E}\Big[\big|(\bar{\sigma}_{m}-\widehat{\sigma}_{m})Z_{m+1}\big|^{2}\Big]&\leq\mathbb{E}\Big[\vertiii{\bar{\sigma}_{m}-\widehat{\sigma}_{m}}^{2}|Z_{m+1}|^{2}\Big]\leq \mathbb{E}\Big[\mathbb{E}\big[\vertiii{\bar{\sigma}_{m}-\widehat{\sigma}_{m}}^{2}|Z_{m+1}|^{2}\:\big | \:\mathcal{F}_{m}\big]\Big]&\nonumber\\
&=\mathbb{E}\Big[\vertiii{\bar{\sigma}_{m}-\widehat{\sigma}_{m}}^{2}\mathbb{E}\big[|Z_{m+1}|^{2}\big]\Big]\leq 4L^{2}q \,\EE \Big[\big|\bar{X}_{t_{m}}-\widehat{X}_{t_{m}}\big|^{2}\Big],\nonumber&
\end{align}
where we recall that $\vertiii{\cdot}$ denotes the operator norm defined by $\vertiii{A}\coloneqq \sup_{\left|z\right|\leq 1}\left|Az\right|$.
Consequently, 
\begin{align}
\EE \Big[\big|\bar{X}_{t_{m+1}}-\widetilde{X}_{t_{m+1}}\big|^{2}\Big]\leq \big[1+4Lh (1+Lh+Lq)\big]\cdot\EE \Big[\big|\bar{X}_{t_{m}}-\widehat{X}_{t_{m}}\big|^{2}\Big]\nonumber
\end{align}
so that 
\begin{align}
\vertii{\bar{X}_{t_{m+1}}-\widetilde{X}_{t_{m+1}}}_{2}&\leq \sqrt{\,1+4Lh (1+Lh+Lq)\,}\vertii{\bar{X}_{t_{m}}-\widehat{X}_{t_{m}}}_{2}\nonumber\\
&\leq \big[1+2Lh (1+Lh+Lq)\big]\vertii{\bar{X}_{t_{m}}-\widehat{X}_{t_{m}}}_{2}\nonumber
\end{align}
where in the last inequality, we use the fact that $\sqrt{1+2x}\leq 1+x$ when $x\geq 0$. Hence,
\begin{align}
\vertii{\bar{X}_{t_{m+1}}-\widehat{X}_{t_{m+1}}}_{2}&\leq \vertii{\bar{X}_{t_{m+1}}-\widetilde{X}_{t_{m+1}}}_{2}+\vertii{\widetilde{X}_{t_{m+1}}-\widehat{X}_{t_{m+1}}}_{2}\nonumber\\
&\leq \big[1+2Lh (1+Lh+Lq)\big]\vertii{\bar{X}_{t_{m}}-\widehat{X}_{t_{m}}}_{2}+\Xi_{m+1},\nonumber
\end{align}
where $\Xi_m=\big\Vert \widetilde{X}_{t_m}-\widehat{X}_{t_m} \big\Vert_2$ denotes the quadratic quantization error at time $t_m$ (see \eqref{eq:diff-repre}). 
This directly implies \[\vertii{\bar{X}_{t_{m}}-\widehat{X}_{t_{m}}}_{2}\leq \sum_{j=0}^{m}\big[1+2Lh (1+Lh+Lq)\big]^{j}\Xi_{m-j}.\]

\noindent $(ii)$  {\sc Step 1.} We first prove that there exists a constant $\widetilde{C}_{\max}>0$ such that 
\begin{equation}\label{objstep1}
\forall \, m\in\{0, ..., M\}, \quad \Vert \widetilde{X}_{t_m}\Vert_{2+\varepsilon}\leq\widetilde{C}_{\max}.
\end{equation} 
As for every $m\in\{0, ..., M\}$, the quantizer $x^{(m)}=(x^{(m)}_1, ..., x^{(m)}_K)$ is quadratic optimal for $\widetilde{X}_{t_m}$, Proposition \ref{propclassical}-(4) implies that $\EE\big[\widetilde{X}_{t_m}\;\big|\;\widehat{X}_{t_m}\big]=\widehat{X}_{t_m}$. Hence, for every $p\geq1$, 
\begin{equation}\label{hatletilde}
\EE\Big[\big|\widehat{X}_{t_m}\big|^p\Big]=\EE\Big[\;\Big|\,\EE\big[\widetilde{X}_{t_m}\;\big|\;\widehat{X}_{t_m}\big]\,\Big|^p\,\Big]\leq \EE\Big[\,\EE\big[\big|\widetilde{X}_{t_m}\big|^p\;\big|\;\widehat{X}_{t_m}\big]\,\Big]=\EE\Big[\big|\widetilde{X}_{t_m}\big|^p\Big].
\end{equation}
On the other hand, we have 
\begin{align}
\Vert \widetilde{X}_{t_{m+1}}\Vert_p &\leq \Vert \widehat{X}_{t_m}\Vert_p + h \big\Vert b(t_m, \widehat{X}_{t_m}, \widehat{\mu}_{t_m}) \big\Vert_p +\sqrt{h}\vertii{  \vertiii{\sigma(t_m, \widehat{X}_{t_m}, \widehat{\mu}_{t_m})} \big|Z_{m+1}\big|}_p\nonumber\\
&\leq  \Vert \widehat{X}_{t_m}\Vert_p + h \big\Vert b(t_m, \widehat{X}_{t_m}, \widehat{\mu}_{t_m}) \big\Vert_p +\sqrt{h}\Big\Vert \vertiii{\sigma(t_m, \widehat{X}_{t_m}, \widehat{\mu}_{t_m})}\Big\Vert_p \big\Vert Z_{m+1}\big\Vert_p\nonumber\\
&\leq \Big(1+2h C_{b, \sigma, L, T}+2\sqrt{h} \Vert Z_{m+1}\big\Vert_pC_{b, \sigma, L, T}\Big)\big\Vert \widehat{X}_{t_m}\big\Vert_p+h C_{b, \sigma, L, T}+\sqrt{h} \Vert Z_{m+1}\big\Vert_pC_{b, \sigma, L, T}\nonumber
\end{align}
where the first inequality follows from the definition of $\widetilde{X}_{t_m}$ in \eqref{Eeq} and Minkowski's inequality, the second inequality follows from the fact that $\sigma(t_m, \widehat{X}_{t_m}, \widehat{\mu}_{t_m})\independent Z_{m+1}$ and the third inequality follows from Lemma \ref{finitmomentXN}-(a) and the inequality $\mathcal{W}_p(\widehat{\mu}_{t_m}, \delta_0)\leq \Vert \widehat{X}_{t_m}\Vert_p$.
Thus, the inequality \eqref{hatletilde} implies that 
\[\big\Vert \widetilde{X}_{t_{m+1}}\big\Vert_p\leq C_1\big\Vert  \widetilde{X}_{t_{m}}\big\Vert_p+C_2\]
with $C_1\coloneqq 1+2h C_{b, \sigma, L, T}+2\sqrt{h} \Vert Z_{m+1}\big\Vert_pC_{b, \sigma, L, T}$ and $C_2\coloneqq h C_{b, \sigma, L, T}+\sqrt{h} \Vert Z_{m+1}\big\Vert_pC_{b, \sigma, L, T}$.
As $\widetilde{X}_0=X_0$,  we have 
\begin{equation}\label{concstep1}
\forall\, m\in\{1, ..., M\},\quad \big\Vert \widetilde{X}_{t_{m}}\big\Vert_p \leq C_1^m \big\Vert X_{0}\big\Vert_p+\sum_{j=0}^{m-1}C_1^jC_2
\end{equation}
Thus \eqref{objstep1} is obtained by considering $p=2+\varepsilon$ and by letting $\widetilde{C}_{\max}\coloneqq C_1^M \big\Vert X_{0}\big\Vert_{2+\varepsilon}+\sum_{j=0}^{M-1}C_1^jC_2$.

\noindent{\sc Step 2.} As for every $m\in\{0, ..., M\}$, the quantizer $x^{(m)}=(x^{(m)}_1, ..., x^{(m)}_K)$ is quadratic optimal for $\widetilde{X}_{t_m}$, Zador's theorem (Proposition \ref{propclassical}-(2)) implies that   
\[\forall\,m\in\{0, ..., M\}, \quad \Xi_{m} \leq C_{d, \varepsilon}\mathcal{M}_{2+\varepsilon}(\widetilde{X}_{t_m}) K^{-\frac{1}{d}}.\]
Then we can conclude by remarking $\mathcal{M}_{2+\varepsilon}(\widetilde{X}_{t_m})\leq \big\Vert \widetilde{X}_{t_m}\big\Vert_{2+\varepsilon}\leq \widetilde{C}_{\max}$.
\end{proof}

\vspace{-0.3cm}

\subsection{Recursive quantization-based scheme for the Vlasov equation}\label{recurq}

In this section, we present the recursive quantization scheme in the Vlasov setting and show how to integrate Lloyd's algorithm into this scheme to reduce the quantization error at each time step.

Recall that in the Vlasov setting, there exist $\beta:[0,T]\times\mathbb{R}^{d}\times\mathbb{R}^{d}\rightarrow \mathbb{R}^{d}$ and $a: [0,T]\times\mathbb{R}^{d}\times\mathbb{R}^{d}\rightarrow \mathbb{M}_{d, q}(\mathbb{R})$ such that 
$b(t, x, \mu)=\int_{\mathbb{R}^{d}}\beta(t, x, u)\mu(du)\hspace{0.15cm}\text{and}\hspace{0.15cm}\sigma(t, x, \mu)=\int_{\mathbb{R}^{d}}a(t, x, u)\mu(du).$
Consider a quantizer sequence $x^{(m)}=(x_{1}^{(m)}, ..., x_{K}^{(m)})\in(\mathbb{R}^{d})^{K}, \;0\leq m\leq M$ such that for every $m=\{0, ..., M\}$, we have $x^{(m)}_i\neq x^{(m)}_j$ if $i\neq j$. Write $\big(V_{k}(x^{(m)})\big)_{1\leq k\leq K}$ for a Vorono\"i partition generated by $x^{(m)}$. 
Then the transition step of theoretical quantization-based scheme  $\eqref{Eeq}$ can be written as 
\vspace{-0.2cm}
\begin{align}
\widetilde{X}_{t_{m+1}}=\widehat{X}_{t_{m}}+ h\sum_{k=1}^{K}p_{k}^{(m)}\beta(t_m, \widehat{X}_{t_{m}}, x_{k}^{(m)})+\sqrt{ h} \Big[\sum_{k=1}^{K}p_{k}^{(m)}a(t_m, \widehat{X}_{t_{m}}, x_{k}^{(m)})\Big]Z_{m+1},\nonumber
\end{align}
where $p_{k}^{(m)}=\PP\big(\widehat{X}_{t_m}=x_k^{(m)}\big)=\PP\big(\widetilde{X}_{t_m}\in V_{k}(x^{(m)})\big)$ is the weight of $x_{k}^{(m)}$. 
Hence, given $\widehat{X}_{t_{m}}$, $\widetilde{X}_{t_{m+1}}$ has a normal distribution
\vspace{-0.2cm}
\[\widetilde{X}_{t_{m+1}}\sim\mathcal{N}\Big(\widehat{X}_{t_{m}}+ h\sum_{k=1}^{K}p_{k}^{(m)}\beta(t_m, \widehat{X}_{t_{m}}, x_{k}^{(m)}), \;\; h\big[\sum_{k=1}^{K}p_{k}^{(m)}a(t_m, \widehat{X}_{t_{m}}, x_{k}^{(m)})\big]^{\top}\big[\sum_{k=1}^{K}p_{k}^{(m)}a(t_m, \widehat{X}_{t_{m}}, x_{k}^{(m)})\big]\;\Big)\]
since $Z_{m+1}\sim \mathcal{N}(0, \mathbf{I}_{q})$.  Under the condition that $p^{(m)}$ is known, it follows that 
\begin{align}\label{transproba}
& \mathbb{P}\big(\widehat{X}_{t_{m+1}}=x_{j}^{(m+1)}\mid \widehat{X}_{t_{m}}=x_{i}^{(m)}\big)\nonumber\\
&\quad=\mathbb{P}\big(\widetilde{X}_{t_{m+1}}\in V_{j}(x^{(m+1)})\mid \widehat{X}_{t_{m}}=x_{i}^{(m)}\big)\nonumber\\
&\quad=\mathbb{P}\Big[\big(x_{i}^{(m)}+ h\sum_{k=1}^{K}p_{k}^{(m)}\beta(t_m, x_{i}^{(m)}, x_{k}^{(m)})+\sqrt{ h}\sum_{k=1}^{K}p_{k}^{(m)}a(t_m, x_{i}^{(m)}, x_{k}^{(m)})Z_{m+1}\big)\in V_{j}(x^{(m+1)})\Big]
\end{align}
and obviously, by letting 
\[\mathcal{E}_{i}(x^{(m)}, p^{(m)}, Z_{m+1})\coloneqq x_{i}^{(m)}+ h\sum_{k=1}^{K}p_{k}^{(m)}\beta(t_m, x_{i}^{(m)}, x_{k}^{(m)})+\sqrt{ h}\sum_{k=1}^{K}p_{k}^{(m)}a(t_m, x_{i}^{(m)}, x_{k}^{(m)})Z_{m+1},\] 
we get
\begin{align}\label{pm1given}
& \mathbb{P}\big(\widehat{X}_{t_{m+1}}=x_{j}^{(m+1)} \big)=\mathbb{P}\big(\widetilde{X}_{t_{m+1}}\in V_{j}(x^{(m+1)}) \big) =\sum_{i=1}^{K}\mathbb{P}\big(\widehat{X}_{t_{m+1}}=x_{j}^{(m+1)}\mid \widehat{X}_{t_{m}}=x_{i}^{(m)}\big)\cdot\mathbb{P}(\widehat{X}_{t_{m}}=x_{i}^{(m)})\nonumber\\
&\quad=\sum_{i=1}^{K}\mathbb{P}\Big(\mathcal{E}_{i}(x^{(m)}, p^{(m)}, Z_{m+1})\in V_{j}(x^{(m+1)})\Big)\cdot p_{i}^{(m)}\eqqcolon g(x^{(m)}, p^{(m)}, Z_{m+1}).
\end{align}
Thus one can compute the weights  $p^{(m+1)}_j=\PP\big(\widehat{X}_{t_{m+1}}=x^{(m+1)}_j\big),\; 1\leq j\leq K,$  starting from the weight vector $p^{(m)}$ at time $t_{m}$ by applying \eqref{pm1given}.

The formula \eqref{pm1given} is valid for any quantizer sequence $x^{(m)}$ with distinct components. Nonetheless, the quantization errors $\Xi_{m}=\Vert \widetilde{X}_{t_m}-\widehat{X}_{t_m}\Vert_2$ of each time step $t_m$ impact on the simulation result, which is also indicated in the error analysis \eqref{errortheoq}. One way to reduce the quantization error $\Xi_{m}$ is to integrate Lloyd's algorithm (see Algorithm \ref{lloyd}) into the recursive quantization scheme. Given $x^{(m)}$ and $p^{(m)}$, the Lloyd's iteration \eqref{lloyditeration} sending $x^{(m+1,\, [l])}=(x^{(m+1,\, [l])}_1, ..., x^{(m+1,\, [l])}_K)$ to $x^{(m+1,\, [l+1])}=(x^{(m+1,\, [l+1])}_1, ..., x^{(m+1,\, [l+1])}_K)$ for time step $t_{m+1}$ is
\begin{equation}\label{lloyditer1}
x_{k}^{(m+1,\, [l+1])}=\frac{\int_{V_{k}(x^{(m+1,\, [l])})}\xi\,\widetilde{\mu}_{t_{m+1}}(d\xi)}{\widetilde{\mu}_{t_{m+1}}\big(V_{k}(x^{(m+1,\, [l])})\big)}, \quad k=1, ..., K.
\end{equation}
The denominator of \eqref{lloyditer1} can be directly computed by \eqref{pm1given} while the numerator of \eqref{lloyditer1} is essentially to compute the following value 
\begin{align}\label{Elloyd}
\mathbb{E}\big[\widetilde{X}_{t_{m+1}}&\mathbbm{1}_{V_{j}(x^{(m+1)})}(\widetilde{X}_{t_{m+1}})\big]=\sum_{i=1}^{K}\mathbb{E}\big[\widetilde{X}_{t_{m+1}}\mathbbm{1}_{V_{j}(x^{(m+1)})}(\widetilde{X}_{t_{m+1}})\mid \widehat{X}_{t_{m}}=x_{i}^{(m)}\big]\cdot p_{i}^{(m)},
\end{align}
where 
\begin{equation}\label{whyslow}
\mathbb{E}\big[\widetilde{X}_{t_{m+1}}\mathbbm{1}_{V_{j}(x^{(m+1)})}(\widetilde{X}_{t_{m+1}})\mid \widehat{X}_{t_{m}}=x_{i}^{(m)}\big]=\mathbb{E}\big[Y\mathbbm{1}_{V_{j}(x^{(m+1)})}(Y)\big]
\end{equation}
with 
\begin{equation}
Y\sim \mathcal{N}\Big(x_{i}^{(m)}+ h\sum_{k=1}^{K}p_{k}^{(m)}\beta(t_m, x_{i}^{(m)}, x_{k}^{(m)}), \;\; h\big[\sum_{k=1}^{K}p_{k}^{(m)}a(t_m, x_{i}^{(m)}, x_{k}^{(m)})\big]^{\top}\big[\sum_{k=1}^{K}p_{k}^{(m)}a(t_m, x_{i}^{(m)}, x_{k}^{(m)})\big]\,\Big).\nonumber
\end{equation}

\begin{rem} In dimension 1,  Lloyd's iteration above is numerically low cost, 
since the  Vorono\"i cells in dimension 1 are in fact  intervals of $\overline{\RR}$. For example, let $x=(x_{1}, ..., x_{K})\in\mathbb{R}^{K}$ be a quantizer such that $x_{i}<x_{i+1}$, $i=1, ..., K-1$, one can choose a Vorono\"i partition as follows: 
\begin{align}
&V_{1}(x)=\big(-\infty, \frac{x_{1}+x_{2}}{2}\big)\quad\text{and}\quad V_{K}(x)=\big[\frac{x_{K-1}+x_{K}}{2}, +\infty),\nonumber\\
&V_{k}(x)=\big[\frac{x_{k-1}+x_{k}}{2},\frac{x_{k}+x_{k+1}}{2}\big), \hspace{1cm} k=2, ..., K-1.\nonumber
\end{align}
Let $x^{(m)}=(x^{(m)}_{1}, ..., x^{(m)}_{K})$ be the quantizer of the $m$-th time step. The transition probability $\pi_{ij}^{(m)}=\mathbb{P}\big(\widehat{X}_{t_{m+1}}=x_{j}^{(m+1)}\mid \widehat{X}_{t_{m}}=x_{i}^{(m)}\big)$ in  (\ref{transproba}) reads 
\[F_{m, \sigma^{2}}\Big(\frac{x_{j+1}^{(m)}+x_{j}^{(m)}}{2}\Big)-F_{m, \sigma^{2}}\Big(\frac{x_{j-1}^{(m)}+x_{j}^{(m)}}{2}\Big),\]
where $F_{m, \sigma^{2}}$ denotes the cumulative distribution function of $\mathcal{N}(m, \sigma^{2})$ with 
\begin{align}\label{mandsigma}
m=x_{i}^{(m)}+ h\sum_{k=1}^{K}p_{k}^{(m)}\beta(x_{i}^{(m)}, x_{k}^{(m)}) \;\text{ and }\;\sigma={\sqrt{h\,}}\Big[\,\sum_{k=1}^{K}p_{k}^{(m)}a(x_{i}^{(m)}, x_{k}^{(m)})\,\Big].
\end{align}
Moreover,  Lloyd's iteration (\ref{whyslow}) depends on the value 
\begin{equation}\label{avoidinter}
\int_{(x_{j-1}^{(m)}+x_{j}^{(m)})/{2}}^{(x_{j+1}^{(m)}+x_{j}^{(m)})/{2}}\xi \cdot f_{m, \sigma^{2}}(\xi)d\xi
\end{equation}
where $f_{m, \sigma^{2}}(\xi)$ is the density function of $\mathcal{N}(m, \sigma^{2})$ with $m$ and $\sigma$ defined in (\ref{mandsigma}). In fact, to avoid computing the integral, (\ref{avoidinter}) can be alternatively calculated by the following formula: for every $a,b\in\RR$,
\begin{align}\label{alterformula}
\int_{a}^{b}\xi \cdot f_{m, \sigma^{2}}(\xi)d\xi&=\int_{a}^{b} \frac{1}{\sqrt{2\pi\sigma^{2}}}\xi e^{-\frac{(\xi-m)^{2}}{2\sigma^{2}}}d\xi=-\frac{\sigma}{\sqrt{2\pi}}\left[e^{-\frac{(\xi-m)^{2}}{2\sigma^{2}}}\right]^{b}_{a}+m\big[F_{m, \sigma^{2}}(b)-F_{m, \sigma^{2}}(a)\big].
\end{align} 

However, in higher dimension, the main difficulty of this method is computing the integral of the density function of a multi-dimensional normal distribution over a Vorono\"i cell (see \eqref{pm1given}, \eqref{whyslow}). There exists several numerical solutions such as \texttt{pyhull} package in \texttt{Python} (see also the website \texttt{www.qhull.com}) for 
 the cubature formulas of the numerical integration over a convex set in low and medium dimensions ($d=2,3,4$). 
\end{rem}

\subsection{$L^{2}$-error analysis of the hybrid particle-quantization scheme }\label{FtoD}

The error analysis of the hybrid particle-quantization scheme \eqref{Feq} is established in the following proposition.

\vspace{-0.1cm}
\begin{prop}\label{quanNparti}
Set the same temporal discretization as Theorem \ref{thm1} and Theorem \ref{thm:quadbasedscheme}.
Assume that Assumption \ref{AssumptionI} holds true with $p=2$. For any $m\in\{1, ..., M\}$, let $\bar{\mu}_{t_m}^N$ be the empirical measure on the particles $(\bar{X}_{t_m}^{0, N}, ..., \bar{X}_{t_m}^{N, N})$, defined by the particle system \eqref{Deq} and let $\widehat{\mu}_{t_m}^{K}$ be the quantized measure defined in \eqref{Feq}. Then, 
\begin{equation}\label{quapar}
\forall\, 1\leq m\leq M, \quad \mathbb{E}\,\Big[\mathcal{W}_{2}\big(\widehat{\mu}_{t_{m}}^{K}, \bar{\mu}_{t_{m}}^{N}\big)\Big]\leq C_{2}\sum_{j=0}^{m-1}C_{1}^{j}\sqrt{\mathbb{E}\,\hat{\Xi}^{2}_{m-1-j}}+\mathbb{E}\,\hat{\Xi}_{m}.
\end{equation}
where $\hat{\Xi}_{m}=\mathcal{W}_{2}\big(\widehat{\mu}_{t_{m}}^{K}, \frac{1}{N}\sum_{i=1}^{N}\delta_{\widetilde{X}_{t_{m}}^{n, N}}\big)$ denotes the quadratic quantization error at time $t_m$ and  $C_{1}, C_{2}$ are positive constants depending on $L$, $q$ and $T$. 
\end{prop}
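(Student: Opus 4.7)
The strategy is to introduce the auxiliary empirical measure $\tilde{\nu}^N_{t_m}\coloneqq\frac{1}{N}\sum_{n=1}^N\delta_{\widetilde{X}^{n,N}_{t_m}}$, whose quantization projection is exactly $\widehat{\mu}^K_{t_m}$, so by definition $\mathcal{W}_2(\widehat{\mu}^K_{t_m},\tilde{\nu}^N_{t_m})=\hat{\Xi}_m$. By the triangle inequality,
\begin{equation*}
\mathcal{W}_2(\widehat{\mu}^K_{t_m},\bar{\mu}^N_{t_m})\le \hat{\Xi}_m+\mathcal{W}_2(\tilde{\nu}^N_{t_m},\bar{\mu}^N_{t_m}),
\end{equation*}
and the second term is controlled by the explicit empirical coupling $\frac{1}{N}\sum_{n=1}^{N}\delta_{(\widetilde{X}^{n,N}_{t_m},\bar{X}^{n,N}_{t_m})}$, giving $\mathcal{W}_2^2(\tilde{\nu}^N_{t_m},\bar{\mu}^N_{t_m})\le \Delta_m$ with $\Delta_m\coloneqq\frac{1}{N}\sum_{n=1}^{N}|\widetilde{X}^{n,N}_{t_m}-\bar{X}^{n,N}_{t_m}|^2$. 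The whole problem then reduces to estimating $\mathbb{E}\,\Delta_m$.

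Next, mimicking the $L^2$ expansion used in the proof of Theorem~\ref{thm:quadbasedscheme}, I compute $|\widetilde{X}^{n,N}_{t_{m+1}}-\bar{X}^{n,N}_{t_{m+1}}|^2$ from the dynamics \eqref{Feq} and \eqref{Nparteulercon}. The cross term involving $Z^n_{m+1}$ vanishes upon conditioning on the filtration $\mathcal{F}_m=\sigma(\bar{X}_0^{1,N},\ldots,\bar{X}_0^{N,N};Z_1^1,\ldots,Z_m^N)$, since $Z^n_{m+1}\independent\mathcal{F}_m$ while all drift and diffusion coefficients at time $t_m$ are $\mathcal{F}_m$-measurable. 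Lipschitz continuity of $b$ and $\sigma$ yields
\begin{equation*}
\bigl|b(t_m,\widetilde{X}^{n,N}_{t_m},\widehat{\mu}^K_{t_m})-b(t_m,\bar{X}^{n,N}_{t_m},\bar{\mu}^N_{t_m})\bigr|\vee\vertiii{\sigma(t_m,\widetilde{X}^{n,N}_{t_m},\widehat{\mu}^K_{t_m})-\sigma(t_m,\bar{X}^{n,N}_{t_m},\bar{\mu}^N_{t_m})}\le L\bigl(|\widetilde{X}^{n,N}_{t_m}-\bar{X}^{n,N}_{t_m}|+\mathcal{W}_2(\widehat{\mu}^K_{t_m},\bar{\mu}^N_{t_m})\bigr),
\end{equation*}
together with $\mathbb{E}|Z^n_{m+1}|^2=q$. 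Averaging over $n$ and invoking the first step in the form $\mathcal{W}_2^2(\widehat{\mu}^K_{t_m},\bar{\mu}^N_{t_m})\le 2\hat{\Xi}_m^2+2\Delta_m$ closes the recursion, producing
\begin{equation*}
\mathbb{E}\,\Delta_{m+1}\le (1+\alpha h)\,\mathbb{E}\,\Delta_m+\beta h\,\mathbb{E}\,\hat{\Xi}_m^2
\end{equation*}
for constants $\alpha,\beta$ depending only on $L$ and $q$.

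Since $\widetilde{X}^{n,N}_0=\bar{X}^{n,N}_0$, we have $\Delta_0=0$, and a direct induction gives $\mathbb{E}\,\Delta_m\le \beta h\sum_{j=0}^{m-1}(1+\alpha h)^j\,\mathbb{E}\,\hat{\Xi}_{m-1-j}^2$. Taking square roots, applying subadditivity $\sqrt{\sum a_i}\le\sum\sqrt{a_i}$, and using Jensen's inequality $\mathbb{E}\,\mathcal{W}_2(\tilde{\nu}^N_{t_m},\bar{\mu}^N_{t_m})\le\sqrt{\mathbb{E}\,\Delta_m}$, I obtain after combining with the triangle inequality of the first step
\begin{equation*}
\mathbb{E}\,\mathcal{W}_2(\widehat{\mu}^K_{t_m},\bar{\mu}^N_{t_m})\le \mathbb{E}\,\hat{\Xi}_m+\sqrt{\beta h}\sum_{j=0}^{m-1}(1+\alpha h)^{j/2}\sqrt{\mathbb{E}\,\hat{\Xi}_{m-1-j}^2},
\end{equation*}
which matches \eqref{quapar} after setting $C_1\coloneqq\sqrt{1+\alpha h}$ (uniformly bounded by $e^{\alpha T/2}$ over $j\le M$, with the residual growth absorbed into $C_2$) and $C_2\coloneqq\sqrt{\beta T}$, both depending only on $L,q,T$.

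The main technical point will be resolving the apparent circularity in Step~2: the quantity $\mathcal{W}_2(\widehat{\mu}^K_{t_m},\bar{\mu}^N_{t_m})$ is simultaneously what we want to control and what appears on the right-hand side of the Lipschitz estimate. It is precisely the triangle splitting of Step~1 that breaks this loop, replacing it by $\hat{\Xi}_m$ (a data given by the quantization quality) plus $\sqrt{\Delta_m}$ (the controllable particle mismatch), which allows the discrete Gronwall-type recursion to close without any coupling back to the unknown Wasserstein distance.
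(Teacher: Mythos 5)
Your proposal is correct and follows essentially the same route as the paper's proof: the same pathwise triangle splitting through $\frac{1}{N}\sum_{n}\delta_{\widetilde{X}^{n,N}_{t_m}}$, the same vanishing cross term via conditioning on $\mathcal{F}_m$, the same empirical coupling bound, the same substitution $\mathcal{W}_2^2(\widehat{\mu}^K_{t_m},\bar{\mu}^N_{t_m})\le 2\hat{\Xi}_m^2+2\Delta_m$ to break the circularity, and the same discrete Gronwall induction from $\Delta_0=0$. The only cosmetic difference is that you iterate the squared recursion and then take square roots with subadditivity, whereas the paper takes square roots of the one-step inequality first and then iterates; both yield the stated bound.
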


\begin{rem}\label{markov-quan-error}
$(a)$ We would like to highlight that the upper-bounds in \eqref{errortheoq}, \eqref{errortheoq2} and \eqref{quapar} are not numerically optimal since the two sums $\sum_{j=0}^{M}\big[1+2Lh (1+Lh+Lq)\big]^{j}$ and $\sum_{j=0}^{M-1}C_{1}^{j}\sqrt{\mathbb{E}\,\hat{\Xi}^{2}_{m-1-j}}$ at the right-hand side  converge to infinity as the temporal discretization number $M\rightarrow+\infty$. 
 These error bounds are commonly observed in Markovian quantization-based schemes (see also in \cite{pages2015recursive}). The resolution of the convergence of these error bounds, when the temporal discretization number $M\rightarrow+\infty$, remains an open problem. However, we do not observe a significant simulation error on numerical experiments (see further Section \ref{example}) even when considering large values of $M$.

\noindent $(b)$   On the right-hand side of \eqref{quapar}, the quantization error term  $\hat{\Xi}_{m}=\mathcal{W}_{2}\big(\widehat{\mu}_{t_{m}}^{K}, \frac{1}{N}\sum_{i=1}^{N}\delta_{\widetilde{X}_{t_{m}}^{n, N}}\big)$ is random due to the random probability measure  $\frac{1}{N}\sum_{i=1}^{N}\delta_{\widetilde{X}_{t_{m}}^{n, N}}$. Consequently, the direct application of Zador's theorem (see Proposition \ref{propclassical}-(2)) is not theoretically feasible when considering an optimal quantizer. This limitation comes from the fact that the term $\mathcal{M}_{p+\varepsilon}\big(\frac{1}{N}\sum_{i=1}^{N}\delta_{\widetilde{X}_{t_{m}}^{n, N}(\omega)}\big)$ is random and cannot be uniformly bounded for every $\omega\in \Omega$. 
\end{rem}

\begin{proof}[Proof of Proposition \ref{quanNparti}]
To simplify the notation, we will denote by 
\[b_{m}^{\text{Q},n}\coloneqq b(t_m, \widetilde{X}_{t_{m}}^{n, N}, \widehat{\mu}_{t_{m}}^{K}),\; b_{m}^{\text{P},n}\coloneqq b(t_m, \bar{X}_{t_{m}}^{n, N}, \bar{\mu}_{t_{m}}^{N}),\;\sigma_{m}^{\text{Q},n}\coloneqq \sigma(t_m, \widetilde{X}_{t_{m}}^{n, N}, \widehat{\mu}_{t_{m}}^{K}),\;\sigma_{m}^{\text{P},n}\coloneqq\sigma(t_m,\bar{X}_{t_{m}}^{n, N}, \bar{\mu}_{t_{m}}^{N}),\]
where the superscript ``P'' indicates the \textit{Particle method} and the superscript ``Q'' indicates the \textit{hybrid particle-Quantization scheme}. Moreover, let $\mathcal{F}_{m}$ be the $\sigma-$algebra generated by $X_{0}, \bar{X}_0^{1, N}, ..., \bar{X}_0^{N, N}$, $Z_{j}^{n},  n=1, ..., N, j=1, ..., m$. Then $b_{m}^{\text{P},n}$, $b^{\text{Q},n}_{m}$, $\sigma_{m}^{\text{P},n}$, $\sigma_{m}^{\text{Q},n}$ are $\mathcal{F}_{m}$-measurable and $Z_{m+1}^{n}, 1\leq n\leq N$ are independent of $\mathcal{F}_{m}$. It follows that 
\begin{align}\label{difftildebar}
\widetilde{X}_{t_{m+1}}^{n, N}-\bar{X}_{t_{m+1}}^{n, N}=&\widetilde{X}_{t_{m}}^{n, N}-\bar{X}_{t_{m}}^{n, N}+h\big(b_{m}^{\text{Q},n}-b_{m}^{\text{P},n}\big)+\sqrt{ h} \big(\sigma_{m}^{\text{Q},n}-\sigma_{m}^{\text{P},n}\big)Z_{m+1}^{n},
\end{align}
then 
\begin{align}\label{deftildebar2}
\mathbb{E}\Big[\big|\widetilde{X}_{t_{m+1}}^{n, N}-\bar{X}_{t_{m+1}}^{\,n, N}\big|^{2}\Big]&=\mathbb{E}\Big[\big|(\widetilde{X}_{t_{m}}^{n, N}-\bar{X}_{t_{m}}^{\,n, N})+h\big(b_{m}^{\text{Q},n}-b_{m}^{\text{P},n}\big) \big|^{2}\Big]+\mathbb{E}\Big[\big|\sqrt{ h}\big(\sigma_{m}^{\text{Q},n}-\sigma_{m}^{\text{P},n}\big)Z_{m+1}^{n}\big|^{2}\Big]\nonumber\\
&\hspace{0.5cm}+2\mathbb{E}\Big[\Big\langle\big(\widetilde{X}_{t_{m}}^{n, N}-\bar{X}_{t_{m}}^{\,n, N}\big)+h\big(b_{m}^{\text{Q},n}-b_{m}^{\text{P},n}\big) \,,\,\sqrt{h}\big(\sigma_{m}^{\text{Q},n}-\sigma_{m}^{\text{P},n}\big)Z_{m+1}^{n}\Big\rangle\Big]. 
\end{align}
The third term in \eqref{deftildebar2} equals to 0 since $Z_{m+1}^{n}, n\in\{1, ..., N\}$, are independent of $\mathcal{F}_{m}$. 
Moreover, Assumption \ref{AssumptionI} implies,
\[\mathbb{E}\Big[\big|b_{m}^{\text{Q},n}-b_{m}^{\text{P},n}\big|^{2}\Big]\leq 2L^{2}\Big\{\mathbb{E}\Big[\big|\widetilde{X}^{n, N}_{t_{m}}-\bar{X}_{t_{m}}^{n, N}\big|^{2}\Big]+\mathbb{E}\Big[\mathcal{W}_{2}^{2}(\widehat{\mu}_{t_{m}}^{K}, \bar{\mu}_{t_{m}}^{N})\Big]\Big\},\]
and
\begin{align*}
&\mathbb{E}\Big[\big|\sqrt{ h\,}(\sigma_{m}^{\text{Q},n}-\sigma_{m}^{\text{P},n})Z_{m+1}^{n}\big|^{2}\Big]\leq h\mathbb{E}\Big[\mathbb{E}\Big[\vertiii{\sigma_{m}^{\text{Q},n}-\sigma_{m}^{\text{P},n}}^{2} \left|Z_{m+1}^{n}\right|^{2}\;\Big|\; \mathcal{F}_{m}\Big]\Big]= h\,\mathbb{E}\Big[\vertiii{\sigma_{m}^{\text{Q},n}-\sigma_{m}^{\text{P},n}}^{2} \mathbb{E}\Big[\left|Z_{m+1}^{n}\right|^{2}\Big]\Big]\\
&\qquad= h\,{q}\,\mathbb{E}\Big[\vertiii{\sigma_{m}^{\text{Q},n}-\sigma_{m}^{\text{P},n}}^{2}\Big]\leq 2L^{2} h\,{q}\Big[\mathbb{E}\big|\widetilde{X}^{n, N}_{t_{m}}-\bar{X}_{t_{m}}^{n, N}\big|^{2}+\mathbb{E}\mathcal{W}_{2}^{2}(\widehat{\mu}_{t_{m}}^{K}, \bar{\mu}_{t_{m}}^{N})\Big].
\end{align*}
Hence, (\ref{deftildebar2}) becomes
\begin{align*}
\mathbb{E}&\Big[\big|\widetilde{X}_{t_{m+1}}^{n, N}-\bar{X}_{t_{m+1}}^{\,n, N}\big|^{2}\Big]\\
&=\mathbb{E}\Big[\big|(\widetilde{X}_{t_{m}}^{n, N}-\bar{X}_{t_{m}}^{n, N})+h\big(b_{m}^{\text{Q},n}-b_{m}^{\text{P},n}\big) \big|^{2}\Big]+\mathbb{E}\Big[\big|\sqrt{ h}\big(\sigma_{m}^{\text{Q},n}-\sigma_{m}^{\text{P},n}\big)Z_{m+1}^{n}\big|^{2}\Big]\\
&=\mathbb{E}\Big[\big|\widetilde{X}_{t_{m}}^{n, N}-\bar{X}_{t_{m}}^{n, N}\big|^{2}\Big]+\mathbb{E}\Big[h^{2}\left|b_{m}^{\text{Q},n}-b_{m}^{\text{P},n}\right|^{2} \Big]+2 h\mathbb{E}\Big[\big\langle\widetilde{X}_{t_{m}}^{n, N}-\bar{X}_{t_{m}}^{n, N}\,,\, b_{m}^{\text{Q},n}-b_{m}^{\text{P},n}\big\rangle\Big]\\
&\hspace{0.4cm}+\mathbb{E}\Big[\big|\sqrt{ h}\big(\sigma_{m}^{\text{Q},n}-\sigma_{m}^{\text{P},n}\big)Z_{m+1}^{n}\big|^{2}\Big]\\
&\leq \mathbb{E}\Big[\big|\widetilde{X}_{t_{m}}^{n, N}-\bar{X}_{t_{m}}^{n, N}\big|^{2}\Big]+ 2L^{2}( h^{2}+ h{q})\Big[\mathbb{E}\big|\widetilde{X}^{n, N}_{t_{m}}-\bar{X}_{t_{m}}^{n, N}\big|^{2}+\mathbb{E}\mathcal{W}_{2}^{2}(\widehat{\mu}_{t_{m}}^{K}, \bar{\mu}_{t_{m}}^{N})\Big]\\
&\hspace{0.4cm}+ h\,\mathbb{E}\Big[\big|\widetilde{X}_{t_{m}}^{n, N}-\bar{X}_{t_{m}}^{n, N}\big|^{2}+\big|b_{m}^{\text{Q},n}-b_{m}^{\text{P},n}\big|^{2}\Big]\\
&\leq \mathbb{E}\Big[\big|\widetilde{X}_{t_{m}}^{n, N}-\bar{X}_{t_{m}}^{n, N}\big|^{2}\Big]+ 2L^{2}( h^{2}+ h{q})\Big[\mathbb{E}\big|\widetilde{X}^{n, N}_{t_{m}}-\bar{X}_{t_{m}}^{n, N}\big|^{2}+\mathbb{E}\mathcal{W}_{2}^{2}(\widehat{\mu}_{t_{m}}^{K}, \bar{\mu}_{t_{m}}^{N})\Big]\\
&\hspace{0.4cm}+ h\,\mathbb{E}\Big[\big|\widetilde{X}_{t_{m}}^{n, N}-\bar{X}_{t_{m}}^{n, N}\big|^{2}\Big]+2L^{2} h\Big[\mathbb{E}\big|\widetilde{X}^{n, N}_{t_{m}}-\bar{X}_{t_{m}}^{n, N}\big|^{2}+\mathbb{E}\mathcal{W}_{2}^{2}(\widehat{\mu}_{t_{m}}^{K}, \bar{\mu}_{t_{m}}^{N})\Big],\\
&\leq \big({1+2L^{2}(h^{2}+hq)+h+2L^{2}h}\big)\mathbb{E}\Big[\big|\widetilde{X}_{t_{m}}^{n, N}-\bar{X}_{t_{m}}^{n, N}\big|^{2}\Big]+\big(2L^{2}(h^{2}+hq)+2L^{2}h\big)\mathbb{E}\mathcal{W}_{2}^{2}(\widehat{\mu}_{t_{m}}^{K}, \bar{\mu}_{t_{m}}^{N}).\nonumber
\end{align*}
Remark that for any $m\in\{1, ..., M\}$, the measure $\frac{1}{N}\sum_{n=1}^{N}\delta_{(\widetilde{X}_{t_{m}}^{n, N}, \bar{X}_{t_{m}}^{n, N})}$ is a random coupling of $\frac{1}{N}\sum_{n=1}^{N}\delta_{\widetilde{X}_{t_{m}}^{n, N}}$ and $\bar{\mu}_{t_{m}}^{N}=\frac{1}{N}\sum_{n=1}^{N}\delta_{\bar{X}_{t_{m}}^{n, N}}$. Thus, for any $m\in\{1, ..., M\},$
\begin{align}\label{coup}
\mathbb{E}\Big[\mathcal{W}_{2}^{2}\Big(\frac{1}{N}\sum_{n=1}^{N}\delta_{\widetilde{X}_{t_{m}}^{n, N}},  \bar{\mu}_{t_{m}}^{N}\Big)\Big]&\leq \mathbb{E}\Big[\int_{\mathbb{R}^{d}\times\mathbb{R}^{d}}\left|x-y\right|^{2}\frac{1}{N}\sum_{n=1}^{N}\delta_{(\widetilde{X}_{t_{m}}^{n, N}, \bar{X}_{t_{m}}^{n, N})}(dx, dy)\Big]\nonumber\\
&=\mathbb{E}\Big[\frac{1}{N}\sum_{n=1}^{N}\big|\widetilde{X}_{t_{m}}^{n, N}-\bar{X}_{t_{m}}^{n, N}\big|^{2}\Big].
\end{align}
Hence, by letting $C_{1}\coloneqq{1+2L^{2}(h^{2}+hq)+h+2L^{2}h}$, $C_{2}\coloneqq{2L^{2}(h^{2}+hq)+2L^{2}h}$ and by denoting the quantization error of the time $t_m$ by $\hat{\Xi}_{m}=\mathcal{W}_{2}\big(\widehat{\mu}_{t_{m}}^{K}, \frac{1}{N}\sum_{n=1}^{N}\delta_{\widetilde{X}_{t_{m}}^{n, N}}\big)$, we get
\begin{align}\label{inqua}
&\sqrt{\frac{1}{N}\sum_{n=1}^{N}\mathbb{E}\Big[\big|\widetilde{X}_{t_{m+1}}^{n, N}-\bar{X}_{t_{m+1}}^{\,n, N}\big|^{2}\Big]\,}\nonumber\\
&=\sqrt{\frac{C_{1}}{N}\sum_{n=1}^{N}\mathbb{E}\Big[\big|\widetilde{X}_{t_{m}}^{n, N}-\bar{X}_{t_{m}}^{n, N}\big|^{2}\Big]+C_{2}\mathbb{E}\big[\mathcal{W}_{2}^{2}(\widehat{\mu}_{t_{m}}^{K}, \bar{\mu}_{t_{m}}^{N})\big]}\nonumber\\
&\leq \sqrt{\frac{C_{1}}{N}\sum_{n=1}^{N}\mathbb{E}\Big[\big|\widetilde{X}_{t_{m}}^{n, N}-\bar{X}_{t_{m}}^{n, N}\big|^{2}\Big]+C_{2}\Big\{2\mathbb{E}\Big[\mathcal{W}_{2}^{2}(\widehat{\mu}_{t_{m}}^{K}, \frac{1}{N}\sum_{n=1}^{N}\delta_{\widetilde{X}_{t_{m}}^{n, N}})\Big]+2\mathbb{E}\Big[\mathcal{W}_{2}^{2}(\frac{1}{N}\sum_{n=1}^{N}\delta_{\widetilde{X}_{t_{m}}^{n, N}}, \bar{\mu}_{t_{m}}^{N})\Big]\Big\}}\nonumber\\
&\leq\sqrt{ (C_{1}+2C_{2})\cdot\frac{1}{N}\sum_{n=1}^{N}\mathbb{E}\Big[\big|\widetilde{X}_{t_{m}}^{n, N}-\bar{X}_{t_{m}}^{n, N}\big|^{2}\Big] + 2C_{2}\mathbb{E}\,\hat{\Xi}_{m}^{2}}\hspace{0.4cm}(\text{by (\ref{coup})})\nonumber\\
&\leq\sqrt{C_{1}+2C_{2}}\sqrt{\frac{1}{N}\sum_{n=1}^{N}\mathbb{E}\Big[\big|\widetilde{X}_{t_{m}}^{n, N}-\bar{X}_{t_{m}}^{n, N}\big|^{2}\Big]\,}+\sqrt{2C_{2}}\sqrt{\mathbb{E}\,\hat{\Xi}_{m}^{2}}.
\end{align}
Let $\bar{C}_{1}\coloneqq \sqrt{C_{1}+2C_{2}\,}$ and $\bar{C}_{2}=\sqrt{2C_{2}\,}$. The inequality (\ref{inqua}) implies 
\[\sqrt{\frac{1}{N}\sum_{n=1}^{N}\mathbb{E}\Big[\big|\widetilde{X}_{t_{m}}^{n, N}-\bar{X}_{t_{m}}^{n, N}\big|^{2}\Big]\,}\leq \bar{C}_{2}\sum_{j=0}^{m-1}\bar{C}_{1}^{j}\sqrt{\mathbb{E}\,\hat{\Xi}^{2}_{m-1-j}}.\]
Hence, it follows from (\ref{coup}) that 
\begin{align}
\mathbb{E}\,\left[\mathcal{W}_{2}\big(\frac{1}{N}\sum_{n=1}^{N}\delta_{\widetilde{X}_{t_{m}}^{n, N}},\,\bar{\mu}_{t_{m}}^{N}\big)\right]&\leq\sqrt{\mathbb{E}\,\mathcal{W}_{2}^{2}\Big(\frac{1}{N}\sum_{n=1}^{N}\delta_{\widetilde{X}_{t_{m}}^{n, N}},\,\bar{\mu}_{t_{m}}^{N}\Big)}\leq \sqrt{\frac{1}{N}\sum_{n=1}^{N}\mathbb{E}\Big[\big|\widetilde{X}_{t_{m}}^{n, N}-\bar{X}_{t_{m}}^{n, N}\big|^{2}\Big]\,}\nonumber\\
&\leq \bar{C}_{2}\sum_{j=0}^{m-1}\bar{C}_{1}^{j}\sqrt{\mathbb{E}\,\hat{\Xi}^{2}_{m-1-j}}.\nonumber
\end{align}
Consequently, 
\begin{align}
\mathbb{E}\,\Big[\mathcal{W}_{2}&\big(\widehat{\mu}_{t_{m}}^{K}, \bar{\mu}_{t_{m}}^{N}\big)\Big]\leq\mathbb{E}\,\left[\mathcal{W}_{2}\Big(\frac{1}{N}\sum_{n=1}^{N}\delta_{\widetilde{X}_{t_{m}}^{n, N}},\,\bar{\mu}_{t_{m}}^{N}\Big)\right]+\mathbb{E}\,\left[\mathcal{W}_{2}\Big(\frac{1}{N}\sum_{n=1}^{N}\delta_{\widetilde{X}_{t_{m}}^{n, N}},\,\widehat{\mu}_{t_{m}}^{K}\Big)\right]\nonumber\\
&\leq\bar{C}_{2}\sum_{j=0}^{m-1}\bar{C}_{1}^{j}\sqrt{\mathbb{E}\,\hat{\Xi}^{2}_{m-1-j}}+\mathbb{E}\,\hat{\Xi}_{m}.\nonumber\qedhere
\end{align}
\end{proof}

\begin{rem}
For every $m=1, ..., M$, it follows from (\ref{eq:diff-repre}) that
 \[\mathbb{E}\,\hat{\Xi}_{m}=\mathbb{E}\,\left[\mathcal{W}_{2}\Big(\widehat{\mu}_{t_{m}}^{K}, \frac{1}{N}\sum_{n=1}^{N}\delta_{\widetilde{X}_{t_{m}}^{n, N}}\Big)\right]=\mathbb{E}\,\left[e_{K, \frac{1}{N}\sum_{n=1}^{N}\delta_{\widetilde{X}_{t_{m}}^{n, N}}}(x^{(m)})\right].\] Thus one can implement Lloyd's algorithm at each time step in order to reduce the error bound on the right-hand side of (\ref{quapar}). Moreover, for a fixed $\omega\in\Omega$, finding optimal quantizer for the empirical measure $\frac{1}{N}\sum_{n=1}^{N}\delta_{\widetilde{X}_{t_{m}}^{n, N}(\omega)} $ is equivalent to compute the $K$-means cluster centers of the sampling $\big(\widetilde{X}_{t_{m}}^{1, N}(\omega), ..., \widetilde{X}_{t_{m}}^{N, N}(\omega)\big)$ for which we can use e.g. \texttt{sklearn.cluster.KMeans} package in \texttt{Python}. 
\end{rem}

\section{Simulation examples}\label{example}

In this section, we illustrate two simulation examples. The first one is the Burgers equation introduced and already considered for numerical tests in \cite{bossy1997stochastic} and \cite{gobet2005discretization}. This is a one-dimensional example with an explicit solution so we use this example to compare the accuracy and computational time of the different numerical methods under consideration. The second example, the network of FitzHugh-Nagumo neurons, already numerically investigated in \cite{MR2974499}, \cite{bossy2015clarification} (see also  \cite{reis2018simulation}), is a 3-dimensional example. All examples are implemented in \texttt{Python 3.7}.

\subsection{Simulation of the Burgers equation, comparison of three algorithms}

In \cite{bossy1997stochastic}, the authors analyse the solution and study the particle method of the Burgers equation
\begin{equation}\label{burger}
\begin{cases}
dX_{t}=\int_{\mathbb{R}}H({X_{t}}-y)\mu_{t}(dy)dt+\sigma dB_{t},\\
\forall \,t\in[0, T], \;\mu_{t}=P_{X_{t}} \text{ and }X_{0}: (\Omega, \mathcal{F},\mathbb{P})\rightarrow (\mathbb{R}, \mathcal{B}(\mathbb{R})),
\end{cases}
\end{equation}
where $H$ is the Heaviside function $H(z)=\mathbbm{1}_{\{z\geq 0\}}$ and $\sigma$ is a real constant. 
 The cumulative distribution function  $V(t, x)$ of $\mu_{t}$ satisfies 
 \vspace{-0.2cm}
\[\frac{\partial V}{\partial t}=\frac{1}{2}\sigma^{2}\frac{\partial^{2}V}{\partial x^{2}}-V\frac{\partial V}{\partial x}, \qquad
V(0, x)=V_{0}(x).\]
Moreover, since the initial cumulative distribution function $V_{0}$ satisfies $\int_{0}^{x}V_{0}(y)dy=\mathcal{O}(x)$, the function $V$ has a closed form given by (see \cite{MR0047234})
\vspace{-0.2cm}
\[V(t,x)=\frac{\int_{\mathbb{R}} V_{0}(y)\mathrm{exp}\Big(-\frac{1}{\sigma^{2}}\big[\frac{(x-y)^{2}}{2t}+\int_{0}^{y}V_{0}(z)dz\big]\Big)dy}{\int_{\mathbb{R}}\mathrm{exp}\Big(-\frac{1}{\sigma^{2}}\big[\frac{(x-y)^{2}}{2t}+\int_{0}^{y}V_{0}(z)dz\big]\Big)dy},\qquad(t,x)\in[0, T]\times \mathbb{R}.\]
Hence, if we consider $X_{0}=0$, then the cumulative distribution function at time $T=1$ reads 
\vspace{-0.1cm}
\begin{equation}\label{truecdf}
F_{T=1}(x)=\frac{\int_{\mathbb{R}_{+}}\mathrm{exp}\Big(-\frac{1}{\sigma^{2}}
\big[\frac{(x-y)^{2}}{2}+y\big]\Big)dy}{\int_{\mathbb{R}}\mathrm{exp}\Big(-\frac{1}{\sigma^{2}}
\big[\frac{(x-y)^{2}}{2}+y\mathbbm{1}_{y\geq0}\big]\Big)dy}.
\end{equation}
So we can  compare the accuracy of the different numerical methods proposed in the former sections by considering 
\begin{equation}\label{errorFsimuFtrue}
\vertii{F_{\mathrm{simu}}-F_{\mathrm{true}} }_{\sup},
\end{equation}
where $F_{\mathrm{simu}}$ represents the simulated cumulative distribution function by different numerical methods and $F_{\mathrm{true}}$ is the true cumulative distribution function (\ref{truecdf}).  This term \eqref{errorFsimuFtrue} is approximated by 
\begin{equation}\label{errorFsimuFtrue2}
\vertii{F_{\mathrm{simu}}(x)-F_{\mathrm{true}}(x)}_{\sup}\simeq \sup_{x\in \texttt{Unifset}}\left|F_{\mathrm{simu}}(x)-F_{\mathrm{true}}(x)\right|,
\end{equation}
where $\texttt{Unifset}$ is a uniformly spaced point set in $[-2.5, 3.5]$.

In the following simulation, we choose $\sigma^{2}=0.2$ and $M=50$ so that we have  the same time step $ h= \frac{T}{M}=0.02$ for each method.  We give a detailed comparison of the different methods in Table \ref{table}. Remark that the particle method \eqref{Deq} and the hybrid particle-quantization scheme \eqref{Feq} are random algorithms. Hence, their accuracy are computed by taking an average error computed over 50 independent simulations.

In a second phase, we show the converging rate of the simulation error (\ref{errorFsimuFtrue}) of the particle method \eqref{Deq} and the recursive quantization scheme without Lloyd quantizer optimization \eqref{Eeq}-\eqref{Geq}-\eqref{Geq2} respectively according to $N$ and $K$.  
 Table \ref{errortableN} shows  the simulation error \eqref{errorFsimuFtrue} of the particle method  \eqref{Deq} with respect to 
the numbers of particle $N=2^{8}, ..., 2^{13}$ for a fixed $M=50$. As the particle method  \eqref{Deq} is a random algorithm, the error shows in Table \ref{errortableN} is computed by rerunning independently 500 times for each value of $N$. 
\noindent  Figure \ref{logerrorpar} and \ref{errorstddevpar} show the log error  of the particle method with respect to $\log_2(N)$ and the standard deviation of the error. 

Table \ref{errortableK} shows the convergence rate of the simulation error (\ref{errorFsimuFtrue})  of the recursive quantization scheme  \eqref{Eeq}-\eqref{Geq} with respect to the quantizer size $K=2^{5}, 2^{6}, 2^{7}, 2^{8}, 2^{9}, 2^{10}$. Here we use a fixed quantizer sequence which is a uniformly spaced point set in [-2.5, 3.5]  without Lloyd's algorithm for each time step $t_m$.  Figure \ref{55b5} show the log error of the  recursive quantization scheme with respect to $\log_2(K)$.

The slope in Figure \ref{logerrorpar} is approximately -0.285, and in Figure \ref{55b5}, it's approximately -0.436. These slopes deviate results in Theorem \ref{thm1} and  \ref{thm:quadbasedscheme} since we evaluate the simulation error using $\vertii{F_{\mathrm{simu}}(x)-F_{\mathrm{true}}(x)}_{\sup}$ instead of the Wasserstein distance or the $L^2$-norm. We opt for this error measure due to the computational cost of the Wasserstein distance: for two probability distributions $\mu, \nu\in\mathcal{P}_{p}(\RD)$ with respective cumulative distribution functions $F$ and $G$, the Wasserstein distance $\mathcal{W}_{p}(\mu, \nu)$ can be computed by 
\vspace{-0.1cm}
\begin{equation}\label{wasscumu}
\mathcal{W}_{p}^{p}(\mu, \nu)=\int_{0}^{1}\left|F^{-1}(u)-G^{-1}(u)\right|^{p}du, \;p\geq1.\vspace{0.1cm}
\end{equation}
The computation cost arises from the inverse functions of the cumulative distribution functions. Moreover, if we compute (\ref{wasscumu}) by using Monte-Carlo simulation, it will induce its own statistical error which may perturb our comparisons.  
Additionally,  for a fair method comparison between the particle method \eqref{Deq} and the recursive quantization scheme \eqref{Eeq}-\eqref{Geq}-\eqref{Geq2}, consistency in error measurement is necessary.

\small

\begin{spacing}{0.9}

\begin{table}[H]
\centering
\begin{tabular}{|l|l|c|c|}
\cline{1-4}
  &\tabincell{l}{\\Particle size and \\ quantizer size\\\;}&\tabincell{l}{\\Computing  \\ time for each \\Euler step\\\;} & 
 \tabincell{l}{\\ Error \\$\vertii{F_{\mathrm{simu}}-F_{\mathrm{true}} }_{\sup}$\\\;}  \\
 \cline{1-4}
\tabincell{l}{\\Particle method \eqref{Deq}\\ \;}& Particle size $N=10000$
 & 0.00320s & 0.01021\\
 
\cline{1-4}
\tabincell{l}{\\Recursive quantization scheme \\without Lloyd iterations \\\eqref{Eeq}-\eqref{Geq2}\\\;} & Quantizer size $K=500$ &  0.00205s & 0.01054\\
\cline{1-4}
\tabincell{l}{\\Recursive quantization scheme \\with 5 Lloyd iterations \\ at each Euler step\\ \eqref{Eeq}-\eqref{Geq2} and \eqref{pm1given}-\eqref{whyslow}\\\;} &Quantizer size $K=500$& 8.21598s  & 0.01029 \\
\cline{1-4}
\tabincell{l}{\\Hybrid particle-quantization \\ scheme \eqref{Feq} without \\Lloyd iterations \\\;} &\tabincell{l}{\\Particle size $N=10000$\\  Quantizer size $K=500$\\\;}& 6.09480s & 0.01626\\
\cline{1-4}
\tabincell{l}{\\Hybrid particle-quantization \\ scheme \eqref{Feq} with 5 Lloyd  \\ iterations at each Euler step \\\;} &\tabincell{l}{\\Particle size $N=10000$ \\  Quantizer size $K=500$\\\;}& 9.37229s & 0.01013\\
\cline{1-4}
\end{tabular}
\vspace{0.1cm}
\caption{Comparison of three algorithms}\label{table}
\end{table}
\end{spacing}

\vspace{-0.3cm}

\normalsize
\vspace{-0.2cm}

\begin{table}[H]
\begin{spacing}{1.35}
\begin{center}
\begin{tabular}{|c|c|c|c|c|c|c|}
\hline  
N& $2^{8}$ & $2^{9}$ & $2^{10}$ & $2^{11}$ & $ 2^{12}$ & $2^{13}$\\
\hline  
Error $\vertii{F_{\mathrm{simu}}-F_{\mathrm{true}} }_{\sup}$& 0.04691 & 0.03409 & 0.02438 & 0.01785& 0.01407& 0.01131 \\
\hline
Standard deviation &  0.01207 & 0.00939 & 0.00687 & 0.00469 & 0.00408 & 0.00294 \\
\hline
\end{tabular}
\end{center}
\end{spacing}
\caption{Error of the particle method  \eqref{Deq} with respect to the particle size $N$\label{errortableN}}
\end{table}

\vspace{-0.5cm}

\begin{table}[H]
\begin{spacing}{1.35}
\begin{center}
\begin{tabular}{|c|c|c|c|c|c|c|}
\hline  
K& $2^{5}$ & $2^{6}$ & $2^{7}$ & $2^{8}$ & $ 2^{9}$ & $2^{10}$\\
\hline  
Error $\vertii{F_{\mathrm{simu}}-F_{\mathrm{true}} }_{\sup}$& 0.07347 & 0.04176& 0.02360& 0.01471& 0.01043& 0.00829\\
\hline
\end{tabular}
\end{center}
\end{spacing}
\caption{Error of the recursive quantization scheme  \eqref{Eeq}-\eqref{Geq} with respect to the quantizer size $K$\label{errortableK}}
\end{table}

Now we give some short comments on the numerical performance of different methods mainly with regards to two aspects: the accuracy and the computing time. 
\begin{itemize}
\item Comparison of the computing time.

The application of  Lloyd's iteration in the recursive quantization scheme is slightly faster than in the hybrid particle-quantization scheme since we used the closed formulas  (\ref{alterformula}). We can also remark that, when the quantization level $K$ is large,  Lloyd's iteration is numerically costly even in dimension 1 and the application of  Lloyd's algorithm only slightly reduces the quantization error. 
\vspace{0.2cm}
\item Comparison of the accuracy computed by $\vertii{F_{\mathrm{simu}}(x)-F_{\mathrm{true}}(x)}_{\sup}$.

\begin{itemize}

\item Comparing with the particle size $N$ in Table \ref{errortableN} and the quantizer size $K$ in Table \ref{errortableK}, one can remark that to achieve the same accuracy,  we need fewer points in the quantizer than in the particle. 

\item The error of the recursive quantization scheme \eqref{Eeq}-\eqref{Geq}-\eqref{Geq2}, especially when we implement without  Lloyd's quantizer optimization, strongly depends on the choice of the quantizer. Generally, a practical way to choose the initial quantizer of a probability distribution $\mu$ is to use the self-quantization technique for which we refer to \cite{delattre2006quantization}, \cite[Section 7.1 and Section 14]{graf2000foundations}, \cite{pages2003optimal} and \cite{pages2004optimal}. 
Another efficient trick  is to rely on a so-called ``splitting method'', that is, using  
the trained quantizer of the Euler step $t_m$ as an initial quantizer of the Euler step $t_{m+1}$.
\end{itemize}
In this one-dimensional example, we did not remark any obvious advantage of the hybrid particle quantization scheme \eqref{Feq} compared with other methods. 
\end{itemize}


\begin{figure}[H]
\centering
\begin{minipage}{0.4\textwidth}
\includegraphics[height=6cm, width=6cm]{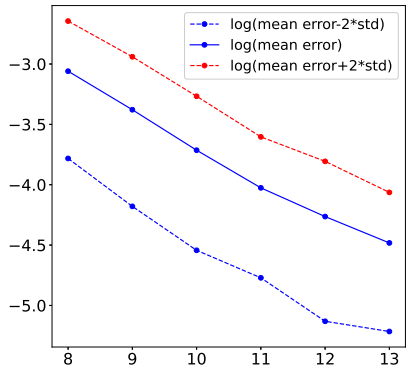}
\caption{Log-error of the particle method \eqref{Deq} with respect to $\log_{2}(N)$. \\\:\\\: }\label{logerrorpar}
\end{minipage}

\end{figure}

\begin{figure}[H]\centering
\begin{minipage}{0.34\textwidth}
\includegraphics[height=6cm, width=6cm]{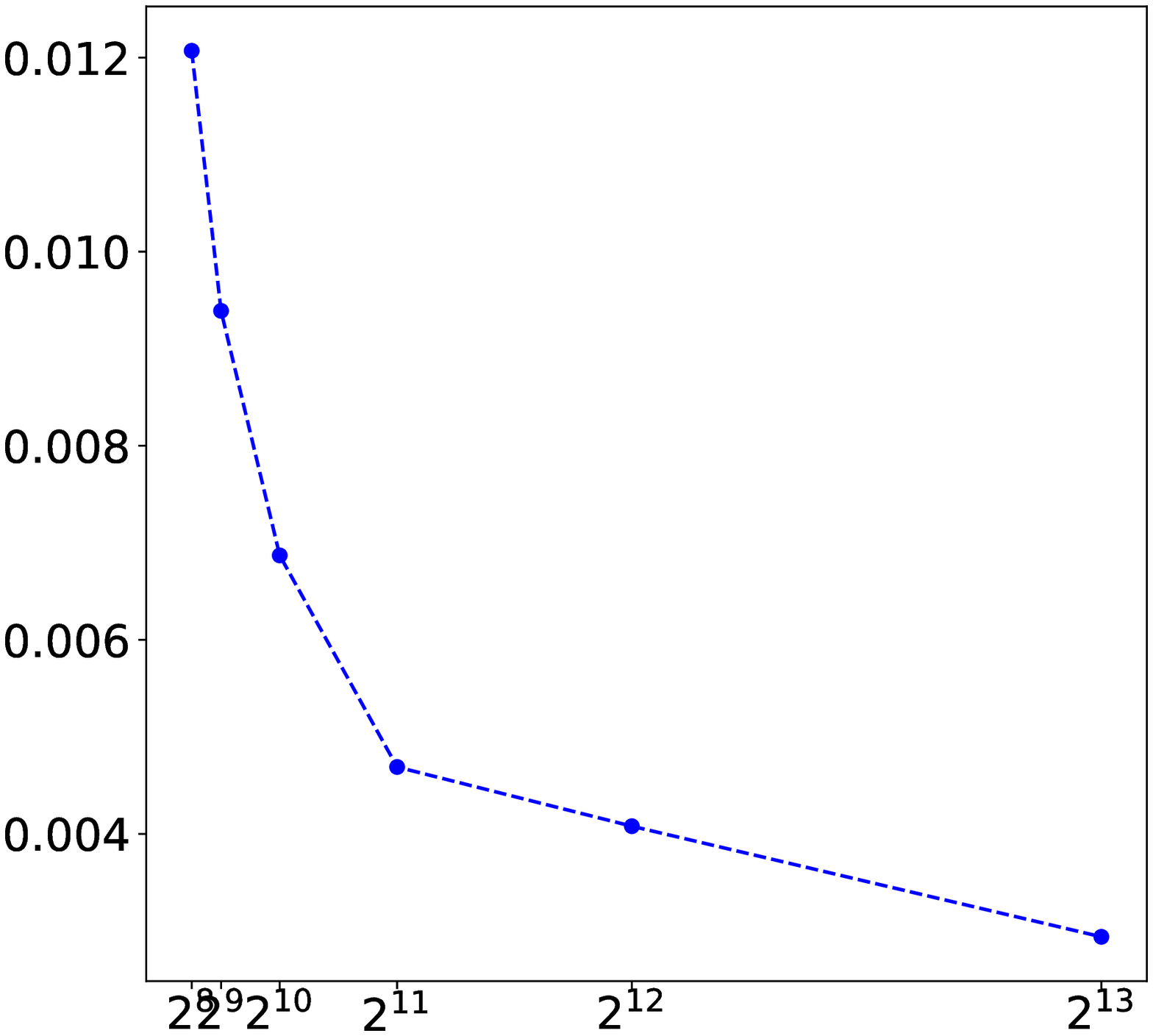}
\caption{Standard deviation of the error of the particle method.  The horizontal axis is $\log_{2}(N)$. }\label{errorstddevpar}
\end{minipage}\hspace{1cm}
\begin{minipage}{0.34\textwidth}
\includegraphics[height=6cm, width=6cm]{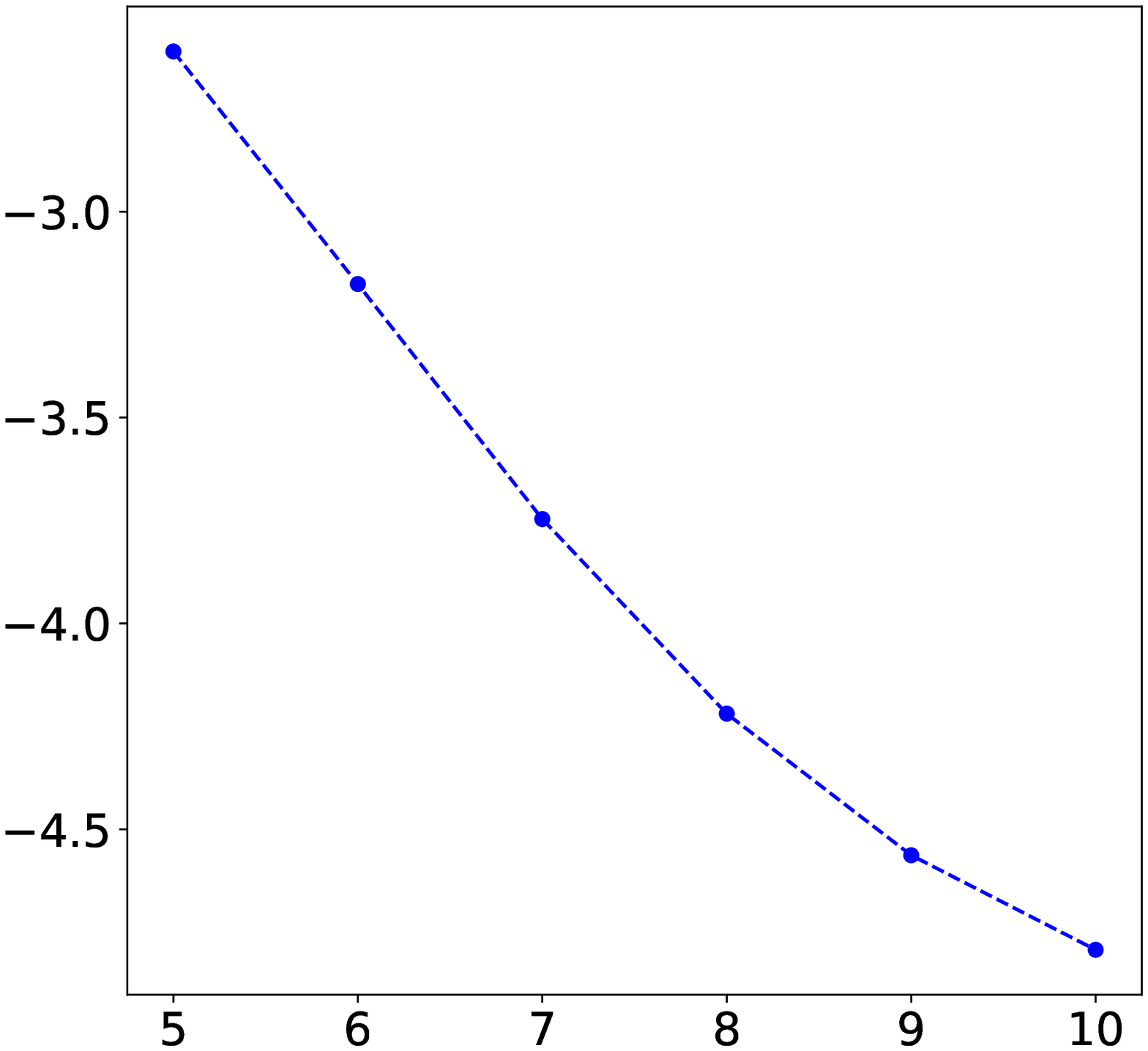}
\caption{Log-error of the recursive quantization scheme \eqref{Eeq}-\eqref{Geq}-\eqref{Geq2} with respect to $\log_{2}(K)$.}\label{55b5}
\end{minipage}

\end{figure}

\smallskip

\subsection{Simulation of the network of FitzHugh-Nagumo neurons in dimension 3}\label{simu2section}

In this section, we consider the network of FitzHugh-Nagumo neurons\footnote{ In this paper, we exclusively employ this model as a simulation illustration. For a comprehensive understanding of the network of FitzHugh-Nagumo neurons, we recommend referring to \cite{MR2974499} and \cite{bossy2015clarification} for more details. } introduced in \cite{MR2974499} and \cite{bossy2015clarification}:\vspace{-0.1cm}
\begin{equation}\label{FitzHughNagumo}
dX_{t}=b(X_{t}, \mu_{t})dt+\sigma(X_{t}, \mu_{t})dB_{t}\vspace{-0.1cm}
\end{equation}
where $b:\RR^{3}\times\mathcal{P}(\RR^{3})\rightarrow\RR^{3}$ and $\sigma:\RR^{3}\times\mathcal{P}(\RR^{3})\rightarrow \mathbb{M}_{3\times3}$ are defined by \vspace{-0.1cm}
\begin{equation}\label{drift12}
b(x, \mu)\coloneqq\left(\begin{array}{c}x_{1}-(x_{1})^{3}/3-x_{2}+I-\int_{\RR^{3}}J(x_{1}-V_{rev})z_{3}\,\mu(dz)\\
c(x_{1}+a-bx_{2})\\
a_{r}\frac{T_{\max}(1-x_{3})}{1+\exp\big(-\lambda(x_{1}-V_{T})\big)}-a_{d}x_{3}\end{array}\right),
\end{equation}
\[\sigma(x, \mu)\coloneqq\left(\begin{array}{ccc}
\sigma_{ext}&0&-\int_{\RR^{3}}\sigma_{J}(x_{1}-V_{rev})z_{3}\,\mu(dz)\\
0&0&0\\
0&\sigma_{32}(x)&0
\end{array}\right),\]
with
\[\sigma_{32}\coloneqq \mathbbm{1}_{x_{3}\in(0, 1)}\sqrt{a_{r}\frac{T_{\max}(1-x_{3})}{1+\exp\big(-\lambda(x_{1}-V_{T})\big)}+a_{d}\,x_{3}}\,\Gamma\exp\big(-\frac{\Lambda}{1-(2x_{3}-1)^{2}}\big),\] 
and    where the initial random variable $X_0$ follows a gaussian distribution \vspace{-0.1cm}
\[X_{0}\sim \mathcal{N}\left(\left(\begin{array}{c} V_{0}\\ \omega_{0}\\ y_{0}\end{array}\right),\left( \begin{array}{ccc}\sigma_{V_{0}}&0&0\\0&\sigma_{\omega_{0}}&0\\ 0&0&\sigma_{y_{0}}\end{array}\right)\right)\vspace{-0.2cm}\]
with the following parameter values \vspace{-0.2cm}
\[\begin{array}{lllllll}
V_{0}=0& \sigma_{V_{0}}=0.4& a=0.7&b=0.8&c=0.08&I=0.5&\sigma_{ext}=0.5\\
\omega_{0}=0.5& \sigma_{\omega_{0}}=0.4 &V_{rev}=1&a_{r}=1&a_{d}=1&T_{\max}=1&\lambda=0.2\\
y_{0}=0.3& \sigma_{y_{0}}=0.05&J=1&\sigma_{J}=0.2&V_{T}=2&\Gamma=0.1&\Lambda=0.5.
\end{array} \vspace{-0.1cm}\]

This model was first studied in~\cite{MR2974499} and then rigorously investigated and analyzed in~\cite{bossy2015clarification}. We are aware that the equation \eqref{FitzHughNagumo} does not fulfil Assumption \ref{AssumptionI} since the drift $b$ defined by~\eqref{drift12} is only locally Lipschitz in $(x, \mu)$. However, the drift  $b$ satisfies \vspace{-0.2cm}
\[
\big\langle\, x\,,\, b(x, \mu)\,\big\rangle \le c_1 \big(|x|^2 +1\big)\vspace{-0.2cm}
\]
for some $c_1>0$ and  the coefficient $\sigma$ is bounded. This  ensures the existence of a strong solution living on the whole $\mathbb{R}_{+}$. In fact, the presence of the term $-\frac{(x_1)^3}{3}$  also induces a mean-reversion property that makes it possible to control the long-run behaviour of the equation. On the other hand, such drift with non-linear growth (in norm) usually induces  instability of the  Euler scheme as emphasized e.g. in~\cite{Lemaire2007}, at least when using evenly spaced time steps. We nevertheless chose this model for a 3D numerical illustration due to its challenging feature,  but with a refined time step ($M=5000$, see below) to ensure its stability. Other numerical schemes, typically semi-implicit could be more stable but are out of the scope of this paper.

In this section, we compare the performance of the particle method \eqref{Deq} and the hybrid particle-quantization scheme \eqref{Feq}  in two aspects. First, we intuitively compare these two methods by simulating the density function of $(x_{1}, x_{2})$ for $T=1.5$, as in the original paper \cite{MR2974499}[Page 31, Figure 4, the third one in the right]. In this step, we choose  $M=5000$ for the temporal discretization to reduce (as much as possible) the error induced by the Euler scheme.  The images of the density function simulated  by the particle method are displayed in Figure \ref{coorparticle3d}, \ref{density3dparticle} and the simulation by the hybrid method is displayed in Figure \ref{quantizer01}, \ref{quantizer02}, \ref{densityquan2}. 

Next, as the particle method and the hybrid method are both random methods, we take 
\[\varphi(\mu_{T}^{\mathrm{simu}})\coloneqq \int_{\mathbb{R}^{3}}\left|\xi\right|^{2}\mu_{T}^{\mathrm{simu}}(d\xi)=\mathbb{E}\Big[\left|X_{T}^{\mathrm{simu}}\right|^{2}\Big]\]
as a test function for the simulated distribution $\mu_{T}^{\mathrm{simu}}$ at time $T$, rerun 200 times for each method and compare the mean and the standard deviation of $\varphi(\mu_{T}^{\mathrm{simu}})$. 

The obtained density functions have a similar form by these two methods, but the data volume obtained by the particle method is 
\[\text{5000 (the number of particle)$\;\times$ 3 (dimension)},\] 
while the data volume obtained by the hybrid method is 
\[\text{300 (the quantizer size)$\;\times$ 4 (dimension $+$ weight for each quantizer).}\]
For a more precise comparison, we now set  the time discretization number $M=150$ and we consider the following test function for the simulated distribution $\mu_{T}^{\mathrm{simu}}$ at $T=1.5$
\[\varphi (\mu_{T}^{\mathrm{simu}})\coloneqq \int_{\mathbb{R}^{3}}\left|\xi\right|^{2}\mu_{T}^{\mathrm{simu}}(d\xi)=\mathbb{E}\Big[\,\big|X_{T}^{\mathrm{simu}}\big|^{2}\,\Big]\]
and rerun 200 times for each method. Table \ref{comp2} shows the simulation results.

Intuitively, the hybrid method can be considered as adding a ``feature extraction'' step on the particle method. Comparing the third and fourth columns of  Table \ref{comp2}, one can notice that this added step needs more computing time but  reduces the size of the output data  for the further computation of  the test function $\varphi (\mu_{T}^{\mathrm{simu}})$ without increasing the standard deviation. However, the second column of Table \ref{comp2} shows that if we implement the particle method with a similar data size than the quantization level, the test value of $\varphi (\mu_{T}^{\mathrm{simu}})$ provides a much larger standard deviation.

\begin{figure}[H]
\centering
\hspace{-1.2cm}\begin{minipage}[t]{0.5\linewidth}
\centering
\includegraphics[height=6.5cm,width=7cm]{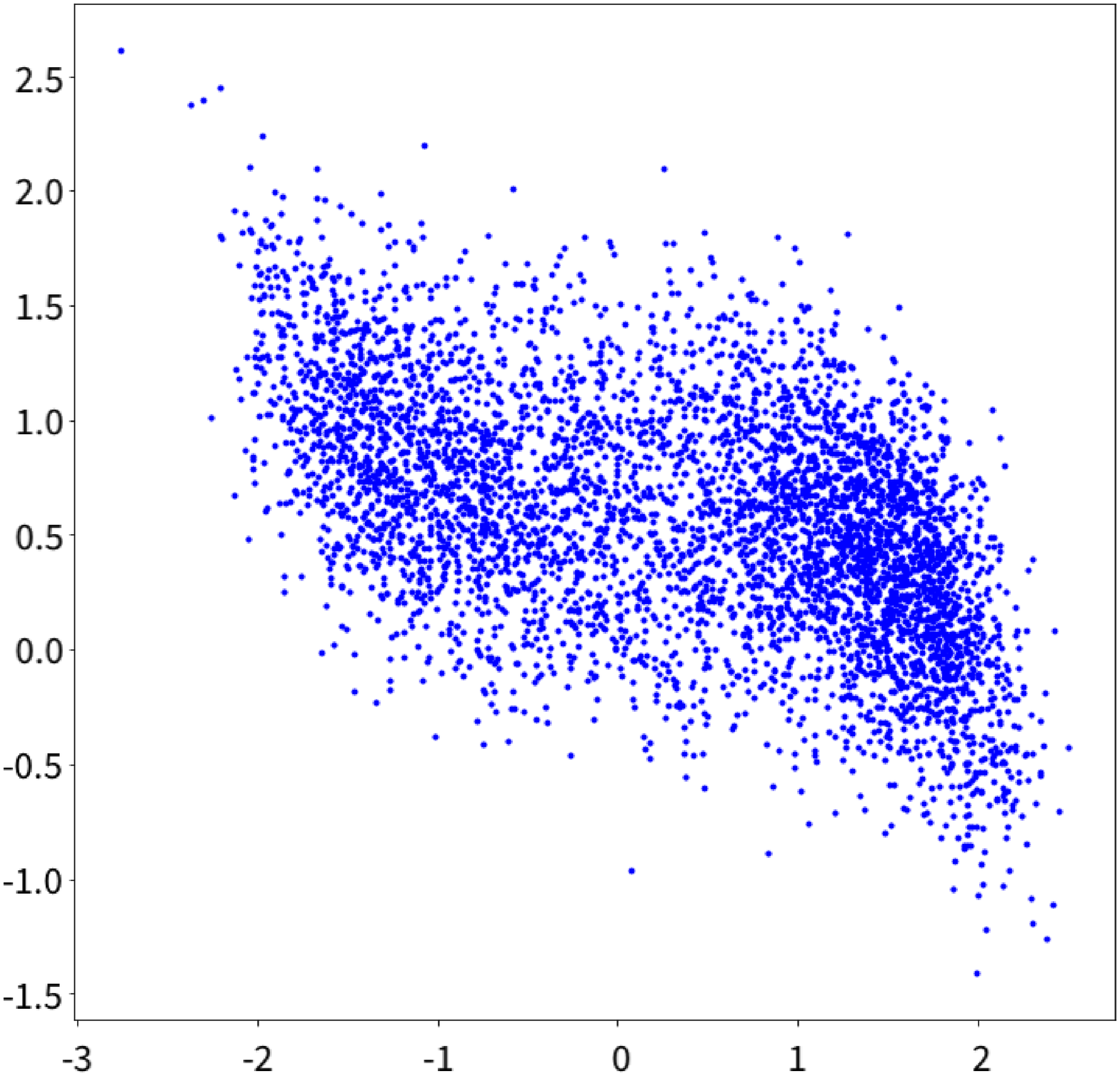}
\caption{The first and second coordinates of 5000 particles at time $T=1.5$.}\label{coorparticle3d}
\end{minipage}
\hspace{0.2cm}
\begin{minipage}[t]{0.5\linewidth}
\centering
\includegraphics[height=5.6cm ,width=9.5cm]{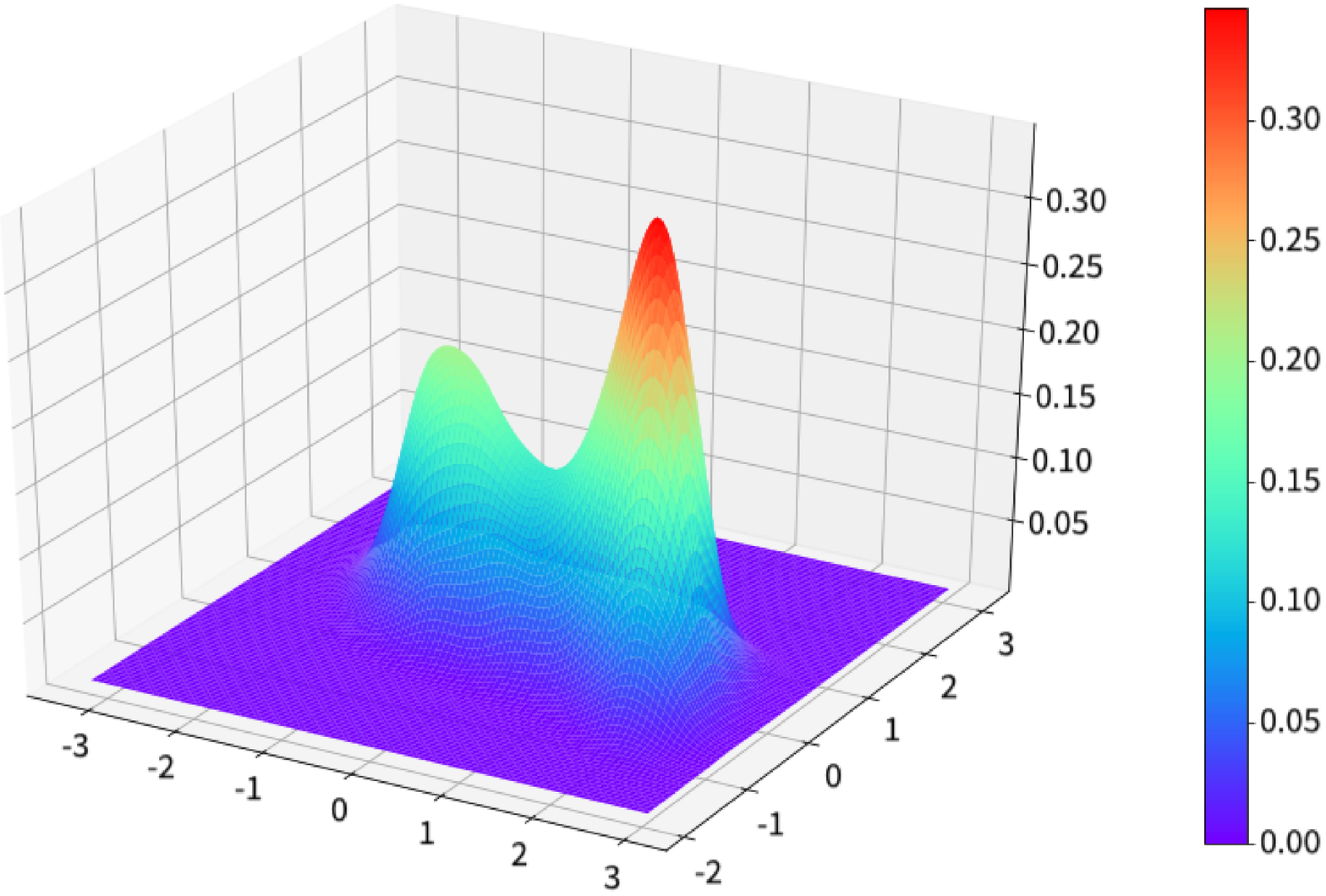}
\caption{The simulated density function smoothened  by the Gaussian kernel method (bandwith = 0.241).}\label{density3dparticle}
\end{minipage}
\end{figure}

\vspace{-0.5cm}
\begin{figure}[H]
\centering
\hspace{-1cm}\begin{minipage}[t]{0.5\textwidth}
\centering
\includegraphics[height=6.4cm ,width=7cm]{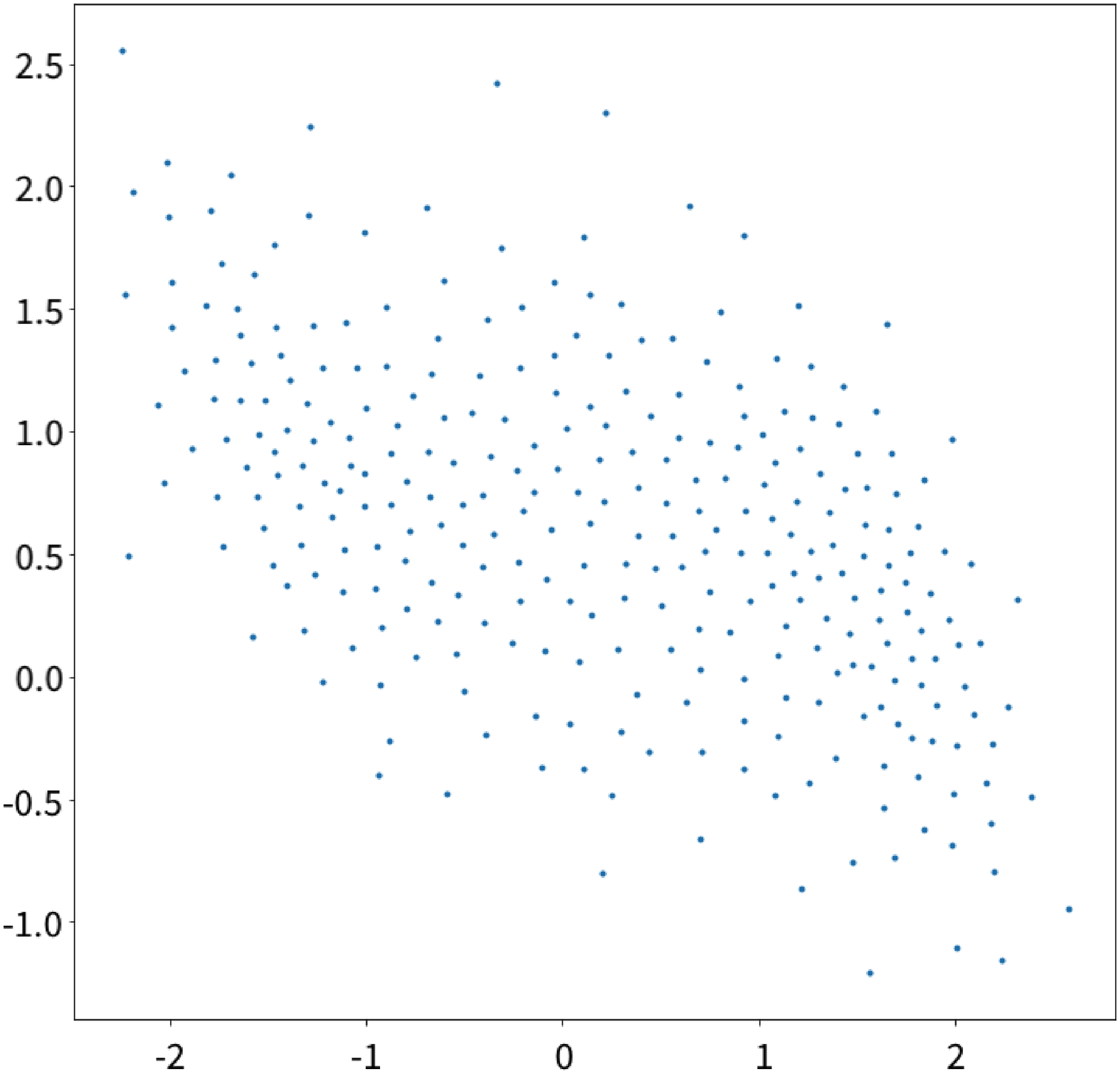}
\caption{The quantizer of $(x_{0}, x_{1})$, simulated with particle number $N=5000$, quantizer size $K=300$ and 10 Lloyd iterations at each Euler step.}\label{quantizer01}
\end{minipage}\hspace{0.2cm}
\begin{minipage}[t]{0.5\textwidth}
\centering
\includegraphics[height=6.4cm ,width=8cm]{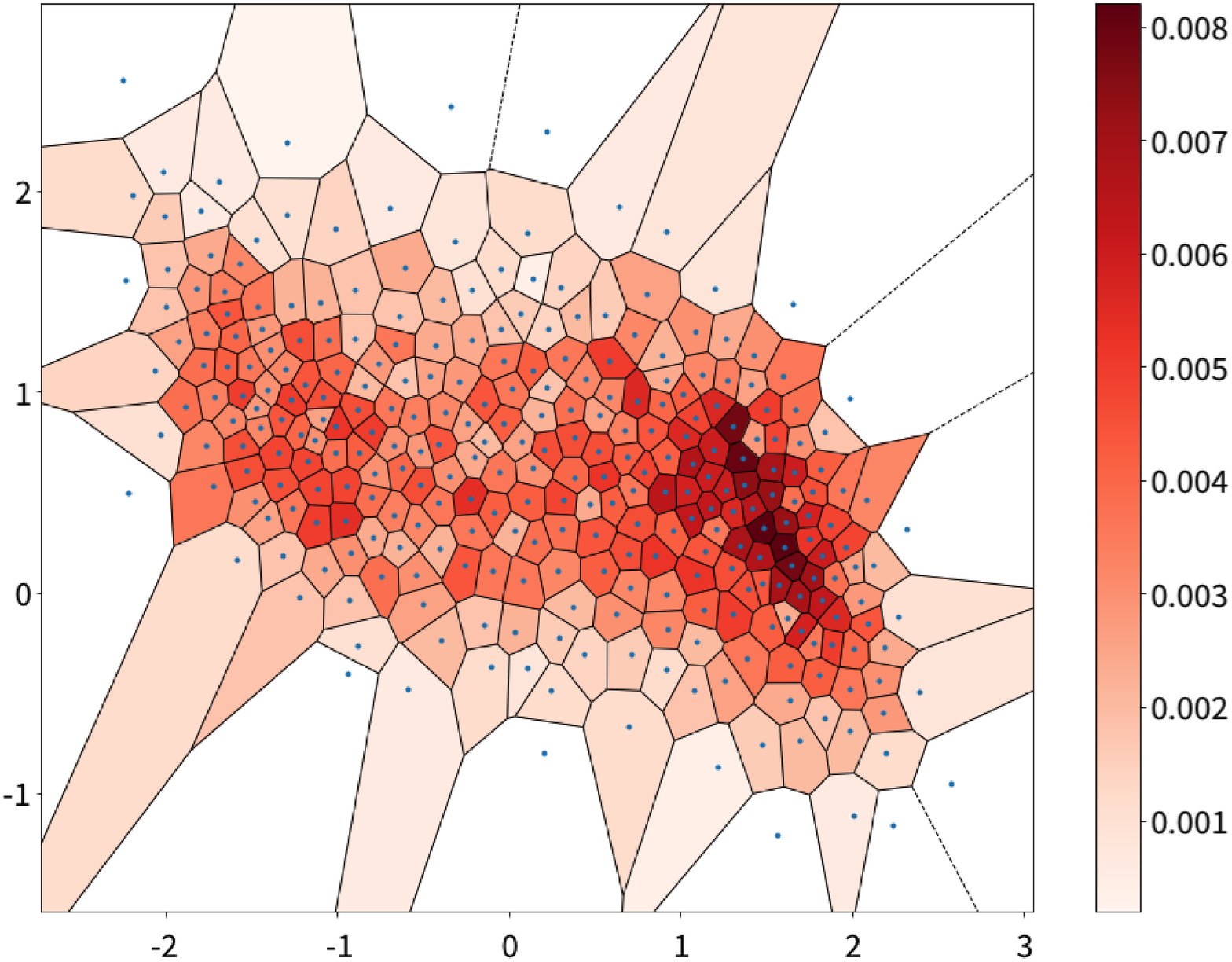}
\caption{The Vorono\"i cells of the above quantizer. The colour of each Vorono\"i cell represents the weight of this cell (the darker the heavier).}\label{quantizer02}
\end{minipage}
\end{figure}

\vspace{-0.5cm}

\begin{figure}[H]
\centering
\begin{minipage}[t]{0.65\textwidth}
\includegraphics[height=7cm ,width=10cm]{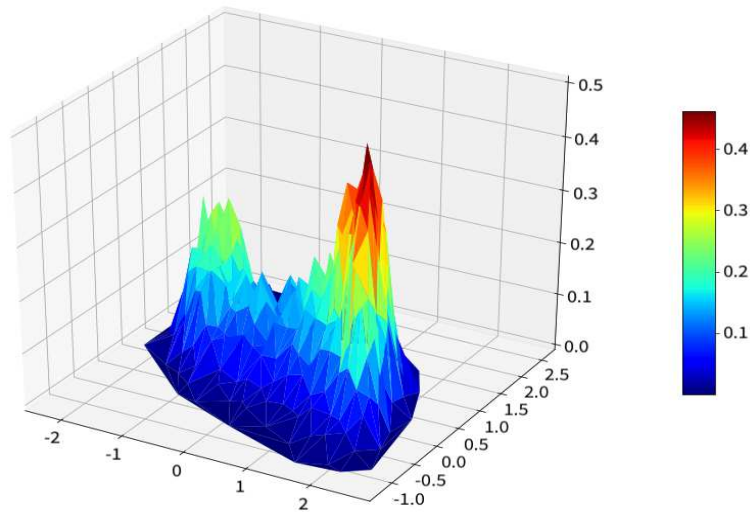}
\vspace{-0.2cm}
\caption{The density function simulated by the hybrid particle-quantization method \eqref{Feq}. The vertical axis is the weight divided by the area of the corresponding Vorono\"i cell.}\label{densityquan2}
\end{minipage}
\end{figure}

\begin{table}[H]
{\begin{spacing}{1.7}
\begin{center}
\begin{tabular}{ |c|c|c|c|c| } 
\hline
\multicolumn{2}{|c|}{} & \multicolumn{2}{c|}{Particle method } & Hybrid method  \\ 
 \cline{3-4}
\multicolumn{2}{|c|}{}  & $\quad$N=300$\quad$ & $\quad$N=5000$\quad$ & $N=5000$ and $K=300$  \\ 
\hline
\multicolumn{2}{|c|}{Data volume for $\mu_{T}^{\mathrm{simu}}$} & 300$\times$3   & 5000$\times$3 & 300$\times$4 \\ 
\hline
\multicolumn{2}{|c|}{\tabincell{l}{\vspace{-0.2cm}Average computing time\\  for each Euler step}} & 0.034s   & 0.608s & 2.670s \\ 
 \hline
\multirow{2}{*}{\tabincell{l}{\vspace{-0.5cm}\\\vspace{-0.2cm} Test function\\  $\varphi (\mu_{T}^{\mathrm{simu}})$}}  & Mean &1.194  &1.205 & 1.192\\ 
 \cline{2-5}
 &\tabincell{l}{\vspace{-0.2cm}Standard\\deviation }   & 0.058  &0.015 & 0.015\\
 \hline
\end{tabular}
\smallskip
\smallskip
\caption{$\quad$Comparison of the simulation result $\varphi (\mu_{T}^{\mathrm{simu}}).$\label{comp2}}
\end{center}
\end{spacing}}
\end{table}

\vspace{-1cm}

\appendix

\section{Proofs of Lemma \ref{relationd}, Lemma \ref{finitmomentXN} and Lemma \ref{lemthm1}}\label{appb}
\begin{proof}[Proof of Lemma \ref{relationd}]
We consider the canonical space $\Omega=\mathcal{C}([0,T], \mathbb{R}^{d})\times\mathcal{C}([0,T], \mathbb{R}^{d})$ equipped with the $\sigma$-algebra $\mathcal{F}$ generated by the distance $d\big((\omega^{1}, \omega^{2}), (\alpha^{1}, \alpha^{2})\big)\coloneqq\left\Vert\omega^{1}-\alpha^{1}\right\Vert_{\sup}\vee\left\Vert\omega^{2}-\alpha^{2}\right\Vert_{\sup}$ and $\mathbb{P}\in\Pi(\mu, \nu)$ where $\Pi(\mu, \nu)$ is the set of probability measures with marginals $\mu$ and $\nu$. For any $\omega=(\omega^{1}, \omega^{2})\in\Omega$, we define the canonical projections $X: \Omega\rightarrow\mathcal{C}([0,T], \mathbb{R}^{d})$ and $Y: \Omega\rightarrow\mathcal{C}([0,T], \mathbb{R}^{d})$ by
\[\forall \,\omega=(\omega^{1}, \omega^{2}),\quad\forall\, t\in[0, T], \hspace{0.5cm} X_{t}(\omega)=\omega_{t}^{1}\;\;\text{and}\;\; Y_{t}(\omega)=\omega_{t}^{2}.\]
The couple $(X, Y)$ makes up the canonical process on $\Omega$. Since $\mathbb{P}\in\Pi(\mu, \nu)$, then 
 $X$ has probability distribution $\mu$ and $Y$ has probability distribution $\nu$. Moreover, we have
\[\sup_{s\in[0,t]}\mathcal{W}_{p}^{p}(\mu_{s}, \nu_{s})\leq \sup_{s\in[0,t]}\mathbb{E}\left|X_{s}-Y_{s}\right|^{p}\leq\mathbb{E}\sup_{s\in[0,t]}\left|X_{s}-Y_{s}\right|^{p}.\]
Then we can choose by the usual arguments $\mathbb{P}\in\Pi(\mu, \nu)$ such that $\mathbb{E}\sup_{s\in[0,t]}\left|X_{s}-Y_{s}\right|^{p}=\big( \mathbb{W}_{p,t}(\mu_{s}, \nu_{s})\big)^{p}$ to conclude the proof. 
\end{proof}

\vspace{-0.3cm}
\begin{proof}[Proof of Lemma \ref{finitmomentXN}]
$(a)$ For any $x\in\mathbb{R}^{d}$ and for any $\mu\in\mathcal{P}_{p}(\mathbb{R}^{d})$, Assumption \ref{AssumptionI} implies that
\[\forall t\,\in[0, T],\;\;\left|b(t, x, \mu)\right|-\left|b(t, 0, \delta_{0})\right|\leq\left|b(t, x, \mu)-b(t, 0, \delta_{0})\right|\leq L\big(\left|x\right|+\mathcal{W}_{p}(\mu, \delta_{0})\big).\]
Hence, 
$\left|b(t, x, \mu)\right|\leq\left|b(t, 0, \delta_{0})\right| + L\big(\left|x\right|+\mathcal{W}_{p}(\mu, \delta_{0})\big)\leq (\left|b(t, 0, \delta_{0})\right| \vee L)(1+\left|x\right|+\mathcal{W}_{p}(\mu, \delta_{0}))$. 
Similarly, we have $\vertiii{\sigma(t, x, \mu)}\leq (\vertiii{\sigma(t, 0,  \delta_{0})} \vee L)(1+\left|x\right|+\mathcal{W}_{p}(\mu,  \delta_{0}))$, so we can take \[C_{b,\sigma, L, T}=\sup_{t\in[0, T]}\left|b(t, 0, \delta_{0})\right|\vee\sup_{t\in[0, T]}\vertiii{\sigma(t, 0, \delta_{0})}\vee L\] to complete the proof. 

\smallskip
\noindent $(b)$ {\sc Step 1.} We prove $\displaystyle\sup_{0\leq m\leq M} \sup_{1\leq n\leq N}\big\Vert \bar{X}_{t_{m}}^{n,N}\big\Vert_p <+\infty$ by induction in this step. First, we know that for every $n=1, ..., N$, $\big\Vert \bar{X}_{0}^{n,N}\big\Vert_p=\big\Vert X_0\big\Vert_p<+\infty$ by the definition of $\bar{X}_{0}^{n,N}$ so that $\sup_{1\leq n \leq N}\big\Vert \bar{X}_{0}^{n,N}\big\Vert_p<+\infty$. Now assume that $\sup_{1\leq n \leq N}\big\Vert \bar{X}_{t_m}^{n,N}\big\Vert_p<+\infty$. For every $n=1,..., N$, we have
\begin{align}
\Big\Vert \bar{X}_{t_{m+1}}^{n,N} \Big\Vert_p 
&\leq  \Big\Vert \bar{X}_{t_{m}}^{n,N} \Big\Vert_p+h\cdot \Big\Vert b\big(t_{m}, \bar{X}^{n, N}_{t_{m}},\bar{\mu}^{N}_{t_{m}}\big) \Big\Vert_p+\sqrt{h}\cdot \Big\Vert \;\vertiii{\sigma(t_{m}, \bar{X}^{n, N}_{t_{m}}, \bar{\mu}^{N}_{t_{m}})}\;\Big\Vert_p \Big\Vert Z_{m+1}^{n}\Big\Vert_p\nonumber
\end{align}
where the inequality above follows from the Minkowski inequality and  the fact that $Z_{m+1}^{n}$ is independent of $\mathcal{F}_{t_{m}}$ and $\sigma(t_{m}, \bar{X}^{n, N}_{t_{m}}, \bar{\mu}^{N}_{t_{m}})$ is $\mathcal{F}_{t_{m}}$ measurable. 
By applying the inequality \eqref{lgrowth} in the first part, we obtain 
\begin{align}
\Big\Vert b(t_{m}, \bar{X}^{n, N}_{t_{m}}, \bar{\mu}^{N}_{t_{m}}) \Big\Vert_p \vee  \Big\Vert \;\vertiii{\sigma(t_{m}, \bar{X}^{n, N}_{t_{m}}, \bar{\mu}^{N}_{t_{m}})}\;\Big\Vert_p \leq C_{b,\sigma, L, T}\Big(1+\Big\Vert  \bar{X}^{n, N}_{t_{m}} \Big\Vert _p+ \Big\Vert \mathcal{W}_{p}\big(\bar{\mu}^{N}_{t_{m}},\delta_{0}\big) \Big\Vert_p\Big)\nonumber
\end{align}
and by the definition of $\bar{\mu}^{N}_{t_{m}}$, we have 
\vspace{-0.2cm}
\begin{align}\label{wpsupinq}
\Big\Vert \;\mathcal{W}_{p}^{\,p}\big(\bar{\mu}^{N}_{t_{m}},\delta_{0}\big) \;\Big\Vert_p^p=\EE\Big[\mathcal{W}_{p}^{\,p}\big( \frac{1}{N}\sum_{n=1}^{N}\delta_{\bar{X}^{n, N}_{t_{m}}},\delta_{0}\big) \Big]\leq \frac{1}{N}\sum_{n=1}^{N}\EE\Big[ \big |\bar{X}^{n, N}_{t_{m}} \big|^{p}\Big]<+\infty.
\end{align}
Hence, for every $n=1, ..., N$, $\Big\Vert \bar{X}_{t_{m+1}}^{n,N} \Big\Vert_p<+\infty$. Then we can conclude $\displaystyle\sup_{0\leq m\leq M} \sup_{1\leq n\leq N}\big\Vert \bar{X}_{t_{m}}^{n,N}\big\Vert_p<+\infty$ by induction.

\smallskip
\noindent {\sc Step 2.} We prove \eqref{finitenorm} in this step. The  first step implies that 
\begin{equation}\label{supinq1}
\Big\Vert \sup_{0\leq m\leq M} \sup_{1\leq n\leq N} \big|\bar{X}_{t_{m}}^{n,N}\big|\:\Big\Vert_p^{p}\leq \sum_{m=0}^{M}\sum_{n=1}^{N}\EE\Big[ \big| \bar{X}_{t_{m}}^{n,N} \big|^{p}\Big]\leq (M+1)N \cdot \!\! \sup_{0\leq m\leq M} \sup_{1\leq n\leq N}\big\Vert \bar{X}_{t_{m}}^{n,N}\big\Vert_p^{p}<+\infty.
\end{equation}
Then,
\vspace{-0.2cm}
\begin{align}
&\left\Vert \;\sup_{t\in[0,T]} \Big| \bar{X}_{t}^{n,N}\Big| \;\right\Vert_p =\left\Vert \;\sup_{0\leq m\leq M} \sup_{t\in[t_m,t_{m+1}]} \Big| \bar{X}_{t}^{n,N}\Big| \;\right\Vert_p\nonumber\\
& \leq \left\Vert \;\sup_{0\leq m\leq M} \sup_{t\in[t_m,t_{m+1}]} \Big| \bar{X}_{t_m}^{n,N} + b\big(t_{m}, \bar{X}_{t_{m}}^{\,n, N}, \bar{\mu}_{t_{m}}^{N}\big)(t-t_{m}) +\sigma\big(t_{m}, \bar{X}_{t_{m}}^{\,n, N}, \bar{\mu}_{t_{m}}^{N}\big)(B_{t}^{n}-B_{t_{m}}^{n})\Big| \;\right\Vert_p \nonumber\\
& \leq \left\Vert  \;\sup_{0\leq m\leq M}  \Big| \bar{X}_{t_m}^{n,N} \Big| + h\Big|b\big(t_{m}, \bar{X}_{t_{m}}^{\,n, N}, \bar{\mu}_{t_{m}}^{N}\big)\Big|\:\:\right\Vert_p+ \left\Vert  \;\sup_{0\leq m\leq M} \vertiii{\sigma\big(t_{m}, \bar{X}_{t_{m}}^{\,n, N}, \bar{\mu}_{t_{m}}^{N}\big)} \sup_{t\in[t_m,t_{m+1}]} \Big|B_{t}^{n}-B_{t_{m}}^{n}\Big|\Big|  \right\Vert_p\nonumber\\
& \leq \left\Vert \sup_{0\leq m\leq M}  \Big| \bar{X}_{t_m}^{n,N} \Big| + h\Big|b\big(t_{m}, \bar{X}_{t_{m}}^{\,n, N}, \bar{\mu}_{t_{m}}^{N}\big)\Big|\right\Vert_p \!\!+\sum_{m=0}^{M} \left\Vert  \vertiii{\sigma\big(t_{m}, \bar{X}_{t_{m}}^{\,n, N}, \bar{\mu}_{t_{m}}^{N}\big)}  \right\Vert_p \!\left\Vert\sup_{t\in[t_m,t_{m+1}]} \Big|B_{t}^{n}-B_{t_{m}}^{n}\Big|  \right\Vert_p \!\!<+\infty \nonumber
\end{align}
by applying \eqref{lgrowth}, \eqref{wpsupinq} and \eqref{supinq1}.
\end{proof}

For the proof of Lemma \ref{lemthm1}, we need the following two inequalities and we refer to \cite{pages2018numerical}[Section 7.8] among other references for more details. 
\begin{lem}[The generalized Minkowski inequality]\label{gemin}
For any (bi-measurable) process $X=(X_{t})_{t\geq0}$, for every $p\in[1,\infty)$ and for every $ T\in[0, +\infty],$ 
$\big\Vert \int_{0}^{T}X_{t}dt\big\Vert_{p}\leq\int_{0}^{T}\left\Vert X_{t}\right\Vert_{p}dt.$
\end{lem}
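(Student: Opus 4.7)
The plan is to prove the inequality by a classical duality argument, treating the case $p=1$ separately and reducing the case $p\in(1,\infty)$ to an application of Hölder's inequality. Throughout, I may assume without loss of generality that $\int_{0}^{T}\Vert X_{t}\Vert_{p}\,dt<+\infty$, since otherwise the right-hand side is $+\infty$ and the inequality holds trivially. The bi-measurability assumption on the process $X=(X_t)_{t\geq 0}$ is precisely what ensures that $(t,\omega)\mapsto X_t(\omega)$ is jointly measurable on $[0,T]\times\Omega$, so that the Fubini–Tonelli theorem applies to the positive integrand $|X_t(\omega)|^p$.

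First I would handle the base case $p=1$. One has $\big\Vert\int_{0}^{T}X_t\,dt\big\Vert_1=\mathbb{E}\big[\,|\int_0^T X_t\,dt|\,\big]\leq \mathbb{E}\int_0^T |X_t|\,dt$, and an application of Fubini–Tonelli to the non-negative function $(t,\omega)\mapsto|X_t(\omega)|$ yields $\mathbb{E}\int_0^T |X_t|\,dt=\int_0^T\mathbb{E}|X_t|\,dt=\int_0^T\Vert X_t\Vert_1\,dt$. The integrability hypothesis also ensures that the pathwise integral $\int_0^T X_t(\omega)\,dt$ is defined for $\mathbb{P}$-almost every $\omega$.

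Next, for $p\in(1,\infty)$ I would set $Y=\int_0^T X_t\,dt$ and use the dual characterization of the $L^p$-norm: $\Vert Y\Vert_p=\sup\{|\mathbb{E}[YZ]|\;:\;Z\in L^q(\Omega,\mathcal{F},\mathbb{P}),\ \Vert Z\Vert_q\leq 1\}$, where $q$ is the conjugate exponent of $p$. For any such test variable $Z$, the Fubini–Tonelli theorem applied to $(t,\omega)\mapsto Z(\omega)X_t(\omega)$ (whose absolute value is integrable by Hölder, since $\int_0^T\Vert ZX_t\Vert_1\,dt\leq \Vert Z\Vert_q\int_0^T\Vert X_t\Vert_p\,dt<+\infty$) gives $\mathbb{E}[YZ]=\int_0^T\mathbb{E}[ZX_t]\,dt$. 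Therefore
\begin{equation*}
|\mathbb{E}[YZ]|\leq \int_0^T |\mathbb{E}[ZX_t]|\,dt\leq \int_0^T \Vert Z\Vert_q\Vert X_t\Vert_p\,dt\leq \int_0^T \Vert X_t\Vert_p\,dt,
\end{equation*}
by Hölder's inequality in the second step. Taking the supremum over $Z$ with $\Vert Z\Vert_q\leq 1$ yields the claim.

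The main delicate point will be the careful justification of Fubini's theorem at each step: one must verify joint measurability of the integrand, and the absolute integrability needed for the signed version of Fubini in the duality step. Once one has made the standard assumption $\int_0^T\Vert X_t\Vert_p\,dt<+\infty$ and exploited the bi-measurability of the process, these verifications are straightforward, so the bulk of the proof reduces to a clean application of duality and Hölder. No further technical obstacle is expected.
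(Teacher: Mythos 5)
Your argument is correct, but note that the paper itself does not prove this lemma at all: it is quoted as a classical fact with a pointer to \cite{pages2018numerical}, Section 7.8, so there is no in-paper proof to compare against. Your duality route (Fubini--Tonelli for $p=1$; converse H\"older/duality plus Fubini for $p\in(1,\infty)$) is a standard and valid way to establish Minkowski's integral inequality, and your reduction to the case $\int_0^T\Vert X_t\Vert_p\,dt<+\infty$ together with the bi-measurability justification of Fubini is exactly where the technical content lies. Two small points are worth making explicit. First, for $p>1$ you should record that $\Vert X_t\Vert_1\leq\Vert X_t\Vert_p$ on a probability space, so the hypothesis $\int_0^T\Vert X_t\Vert_p\,dt<+\infty$ already gives $\int_0^T|X_t|\,dt<+\infty$ a.s.\ and hence that $Y=\int_0^T X_t\,dt$ is well defined and integrable, with $|Y|\leq\int_0^T|X_t|\,dt$ ensuring $\mathbb{E}|YZ|<+\infty$ for every $Z\in L^q$. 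Second, since you do not know a priori that $Y\in L^p$, the identity $\Vert Y\Vert_p=\sup\{|\mathbb{E}[YZ]|:\Vert Z\Vert_q\leq1\}$ must be invoked in its converse-H\"older form valid for arbitrary measurable $Y$ (e.g.\ via the truncated test variables $Z_n=\mathrm{sign}(Y)(|Y|\wedge n)^{p-1}/\Vert\,|Y|\wedge n\,\Vert_p^{p/q}$ and monotone convergence); with that version the supremum bound does deliver $\Vert Y\Vert_p\leq\int_0^T\Vert X_t\Vert_p\,dt$ even without assuming finiteness of the left-hand side. An equally standard alternative, closer to the textbook treatment the paper cites, bounds $\mathbb{E}[|Y|^p]=\mathbb{E}\big[|Y|\,|Y|^{p-1}\big]\leq\int_0^T\mathbb{E}\big[|X_t|\,|Y|^{p-1}\big]\,dt$ and applies H\"older plus a truncation before dividing by $\Vert Y\Vert_p^{p-1}$; both arguments are of comparable length, and yours is complete once the two points above are spelled out.
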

\begin{lem}[Burk\"older-Davis-Gundy inequality (continuous time)]\label{BDGin}
For every $p\in(0, +\infty)$, there exists two real constants $c_{p}^{BDG}>0$ and $C_{p}^{BDG}>0$ such that, for every continuous local martingale $(X_{t})_{t\in[0, T]}$ null at 0, we have
$c_{p}^{BDG}\big\Vert \sqrt{\langle X \rangle_{T}}\big\Vert_{p}\leq\big\Vert \sup_{t\in[0,T]}\left|X_{t}\right|\big\Vert_{p}\leq C_{p}^{BDG}\big\Vert \sqrt{\langle X\rangle_{T}}\big\Vert_{p}.$
\end{lem}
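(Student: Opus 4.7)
The Burk\"older--Davis--Gundy inequality is a two-sided bound, so the proof naturally splits into upper and lower bounds. First I would carry out a standard localization: set $\tau_n = \inf\{t \geq 0 : |X_t| \vee \langle X\rangle_t \geq n\}$, prove the inequality for the stopped martingale $X^{\tau_n}$ (which is bounded), and then let $n \to \infty$ using Fatou's lemma and monotone convergence. This reduces the problem to the case of bounded continuous martingales, where It\^o's formula can be applied freely.

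For the regime $p \geq 2$, the main tool is It\^o's formula applied to the $C^2$ function $f(x) = |x|^p$, giving
\[
|X_t|^p = \int_0^t p\,\mathrm{sgn}(X_s)|X_s|^{p-1} dX_s + \frac{p(p-1)}{2}\int_0^t |X_s|^{p-2} d\langle X \rangle_s.
\]
Taking expectations kills the martingale term, and combining H\"older's inequality on $\int_0^t |X_s|^{p-2} d\langle X\rangle_s$ with Doob's $L^p$ maximal inequality $\|\sup_{s\leq t}|X_s|\|_p \leq \frac{p}{p-1}\|X_t\|_p$ yields the upper bound $\|\sup_{t\leq T}|X_t|\|_p \leq C_p \|\sqrt{\langle X\rangle_T}\|_p$ after absorbing a factor of $\|\sup_{s\leq t}|X_s|\|_p^{p-2}$ on the left side. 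The lower bound is obtained symmetrically by applying It\^o's formula to $\varphi(\langle X\rangle_t) := \langle X\rangle_t^{p/2}$ and controlling the resulting integrals by $\|\sup|X|\|_p$.

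For the regime $0 < p < 2$, the It\^o expansion above fails because $|x|^p$ is not $C^2$ at the origin, so I would switch to Burkh\"older's \emph{good-$\lambda$ inequality} approach. The key intermediate claim is that there exist constants $\beta > 1$ and $\delta,\varepsilon \in (0,1)$ such that, writing $M_T = \sup_{t\leq T}|X_t|$,
\[
\mathbb{P}\bigl(M_T > \beta \lambda,\ \sqrt{\langle X\rangle_T} \leq \delta \lambda\bigr) \leq \varepsilon\, \mathbb{P}(M_T > \lambda), \qquad \lambda > 0,
\]
proved by stopping $X$ at the first crossing of level $\lambda$ and controlling the subsequent excursion via the $L^2$ version of BDG (which is just Doob's inequality applied to the isometry $\mathbb{E} X_t^2 = \mathbb{E}\langle X\rangle_t$). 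Multiplying by $p\lambda^{p-1}$ and integrating over $\lambda \in (0,\infty)$ gives $\|M_T\|_p^p \leq \varepsilon \beta^p \|M_T\|_p^p + (\beta/\delta)^p \|\sqrt{\langle X\rangle_T}\|_p^p$, and choosing $\varepsilon\beta^p < 1$ isolates the upper bound. The reverse inequality follows by interchanging the roles of $M_T$ and $\sqrt{\langle X\rangle_T}$ in the good-$\lambda$ inequality.

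\textbf{Main obstacle.} The hardest part is the good-$\lambda$ inequality for small $p$, and in particular the verification that the reverse good-$\lambda$ bound holds, since one must control $\sqrt{\langle X\rangle_T}$ on the event $\{M_T \leq \delta\lambda\}$ by stopping at the first time $\langle X\rangle$ exceeds $\lambda^2$ and invoking a Chebyshev-type estimate via the $L^2$ isometry. Getting the explicit constants $c_p^{BDG}$ and $C_p^{BDG}$ quantitatively sharp is delicate, but for the qualitative statement used in Lemma \ref{lemthm1} any finite constants suffice.
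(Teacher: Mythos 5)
The paper does not prove this lemma at all: it is quoted as a classical result (Burkh\"older--Davis--Gundy) with a pointer to \cite{pages2018numerical}[Section 7.8], and it is only used as an ingredient in the proof of Lemma \ref{lemthm1}. So there is no ``paper proof'' to match; what you have written is the standard textbook proof, and as a sketch it is essentially sound: localization via $\tau_n$, It\^o's formula on $|x|^p$ plus H\"older and Doob with absorption of $\Vert \sup_s |X_s|\Vert_p^{p-2}$ for $p\geq 2$, and Burkh\"older's good-$\lambda$ inequalities integrated against $p\lambda^{p-1}d\lambda$ for $0<p<2$ (which in fact handles all $p>0$ at once, so the two regimes are not even both needed). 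The one soft spot is your lower bound for $p\geq 2$: ``applying It\^o's formula to $\langle X\rangle_t^{p/2}$'' is not on its own an argument, since $\langle X\rangle$ is of finite variation and the expansion does not by itself reconnect to $X$. The standard fix is either to write $\langle X\rangle_T = X_T^2 - 2\int_0^T X_s\,dX_s$, bound $\Vert\langle X\rangle_T^{p/2}\Vert_1$ by $\Vert \sup_t|X_t|\Vert_p^p$ plus the $L^{p/2}$-norm of the stochastic integral, and apply the already-proved upper bound at exponent $p/2$ (its bracket being $\int_0^T X_s^2\,d\langle X\rangle_s \leq (\sup_t|X_t|)^2\langle X\rangle_T$, followed by Cauchy--Schwarz and Young to absorb), or simply to run the good-$\lambda$ argument in both directions as you do for small $p$. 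With that step made precise, your proof is complete; for the purposes of this paper any finite constants suffice, exactly as you note.
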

\smallskip
In particular, if $(B_{t})$ is an $(\mathcal{F}_{t})$-standard Brownian motion and $(H_{t})_{t\geq0}$ is an $(\mathcal{F}_{t})$-progressively measurable process having values in $\mathbb{M}_{d, q}(\mathbb{R})$ such that $\int_{0}^{T}\left\Vert H_{t}\right\Vert^{2}dt<+\infty$ $\;\mathbb{P}-a.s.$, then the $d$-dimensional local martingale $\int_{0}^{\cdot}H_{s}dB_{s}$ satisfies
\vspace{-0.2cm}
\begin{equation}\label{BDGinequality}
\left\Vert\sup_{t\in[0, T]}\left|\int_{0}^{t}H_{s}dB_{s}\right|\right\Vert_{p}\leq C_{d,p}^{BDG}\left\Vert\sqrt{\int_{0}^{T}\left\Vert H_{t}\right\Vert^{2}dt}\right\Vert_{p}.
\end{equation}

\begin{proof}[Proof of Lemma \ref{lemthm1}]
For every \[\big(X, (\mu_{t})_{t\in[0, T]}\big), \big(Y, (\nu_{t})_{t\in[0, T]}\big)\in\LPC\times \mathcal{C}\big([0, T], \mathcal{P}_{p}(\mathbb{R}^{d})\big),\] for every $t\in[0, T]$, we have 
\begin{flalign}
&\left\Vert\sup_{s\in[0,t]}\left|\int_{0}^{t}\big[b(u, X_{u}, \mu_{u})-b(u, Y_{u}, \nu_{u})\big]du\right|\right\Vert_{p}\leq\int_{0}^{t}\vertii{b(u, X_{u}, \mu_{u})-b(u, Y_{u}, \nu_{u})}_{p}du\;\;\text{(by Lemma \ref{gemin})}&\nonumber\\
&\hspace{0.5cm}\leq\int_{0}^{t}\vertii{L\big[\left|X_{u}-Y_{u}\right|+\mathcal{W}_{p}(\mu_{u}, \nu_{u})\big]}_{p}du\leq L\int_{0}^{t}\big[\vertii{X_{u}-Y_{u}}_{p}+\vertii{\mathcal{W}_{p}(\mu_{u}, \nu_{u})}_{p}\big]du,&\nonumber
\end{flalign}
and
\vspace{-0.2cm}
\begin{align}
&\left\Vert\sup_{s\in[0, t]}\left|\int_{0}^{s}\big[\sigma(u, X_{u}, \mu_{u})-\sigma(u, Y_{u}, \nu_{u})\big]dB_{u}\right|\right\Vert_{p}&\nonumber\\
&\hspace{0.5cm}\leq C_{d, p}^{BDG}\vertii{\sqrt{\int_{0}^{t}\vertiii{\sigma(u, X_{u}, \mu_{u}) - \sigma(u, Y_{u}, \nu_{u})}^{2}du}}_{p}\quad\text{\small(by Lemma \ref{BDGin})}&\nonumber\\
&\hspace{0.5cm}\leq C_{d, p}^{BDG}\vertii{\int_{0}^{t}\vertiii{\sigma(u, X_{u}, \mu_{u}) - \sigma(u, Y_{u}, \nu_{u})}^{2}du}_{\frac{p}{2}}^{\frac{1}{2}}\quad\text{\small(since $\left\Vert\sqrt{U}\right\Vert_{p}=\big[\mathbb{E}U^{\frac{p}{2}}\big]^{\frac{2}{p}\times\frac{1}{2}}=\left\Vert U\right\Vert_{\frac{p}{2}}^{\frac{1}{2}}$, when $U\geq 0$)}&\nonumber\\
&\hspace{0.5cm}\leq C_{d, p}^{BDG}\Big[\int_{0}^{t}\vertii{\vertiii{\sigma(u, X_{u}, \mu_{u}) - \sigma(u, Y_{u}, \nu_{u})}^{2}}_{\frac{p}{2}}du\Big]^{\frac{1}{2}}&\nonumber\\
&\hspace{0.5cm}\leq C_{d, p}^{BDG}\Big[\int_{0}^{t}\big\Vert\vertiii{\sigma(u, X_{u}, \mu_{u}) - \sigma(u, Y_{u}, \nu_{u})}\big\Vert^{2}_{p}du\Big]^{\frac{1}{2}}\quad\text{\big(since $\left\Vert\left| U\right|^{2}\right\Vert_{\frac{p}{2}}=\Big[\big(\mathbb{E}\left| U\right|^{p}\big)^{\frac{1}{p}}\Big]^{2}=\left\Vert U\right\Vert_{p}^{2} $\big)}&\nonumber\\
&\hspace{0.5cm}\leq \sqrt{2}\,C_{d, p}^{BDG}L\Big[\int_{0}^{t}\big[\vertii{X_{u}-Y_{u}}_{p}^{2}+\vertii{\mathcal{W}_{p}(\mu_{u}, \nu_{u})}_{p}^{2}\big]du\Big]^{\frac{1}{2}}.&\nonumber
\end{align}
Then we can conclude the proof by setting $C_{d, p, L}=\sqrt{2}\,C_{d, p}^{BDG}L.$\end{proof}

\newpage
\section{Organisation of different numerical schemes and pseudo-code}\label{appa}

\vspace{0.1cm}

\tikzstyle{mybox} = [draw=black, fill=white, thick,
    rectangle, rounded corners,  inner sep=10pt, inner ysep=15pt]
\tikzstyle{mybox2} = [draw=black, fill=white, dashed, thick,
    rectangle, rounded corners,  inner sep=10pt, inner ysep=15pt]
\tikzstyle{fancytitle}=[draw=black, fill=white, rectangle, rounded corners, thick, text=black]
\tikzstyle{arrow} = [->,>=stealth]

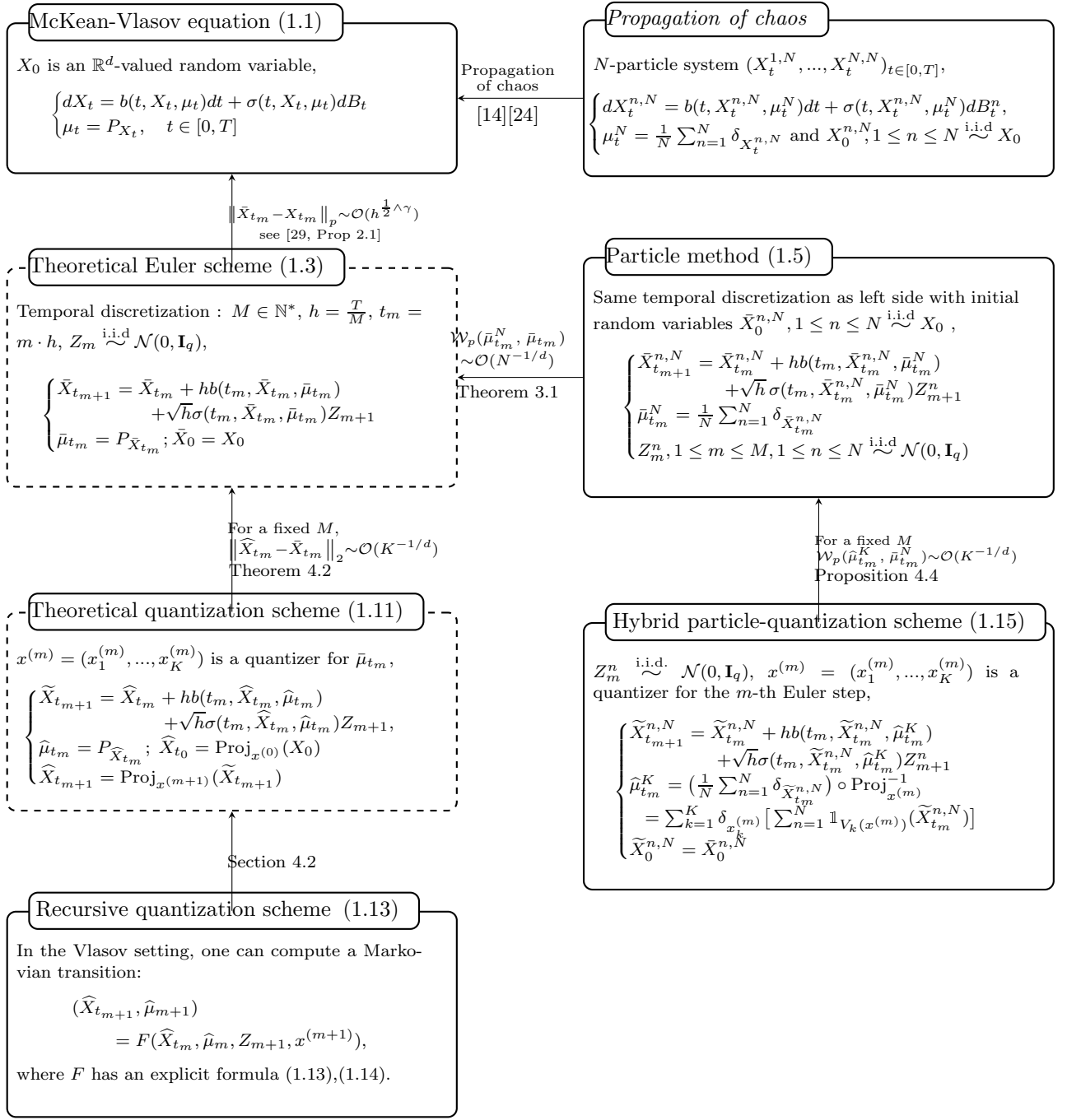
\begin{figure}[hbt!]
\centering

\begin{tikzpicture}

\hspace{-0.2cm}\node (MVequation) [mybox]
{\begin{minipage}{0.4\textwidth}
\footnotesize
    $X_{0}$ is an $\mathbb{R}^{d}$-valued random variable, 
         \[\begin{cases}
     dX_{t}=b(t, X_{t}, \mu_{t})dt +\sigma(t, X_{t}, \mu_{t})dB_{t}\\
        \mu_{t}=P_{X_{t}}, \quad t\in[0, T]
        \end{cases}\]
        \vspace{-0.2cm}
        \end{minipage}
};
\node[fancytitle, right=10pt] at (MVequation.north west) {\:McKean-Vlasov equation \eqref{Aeq} };

\node (Nparticle) [mybox, right = 2cm of MVequation]
{
    \begin{minipage}{0.42\textwidth}
    \footnotesize
$N$-particle system $(X_{t}^{1, N}, ..., X_{t}^{N, N})_{t\in[0,T]}$, 
\[\hspace{-0.1cm}\begin{cases}
dX^{n, N}_{t}=b(t, X^{n, N}_{t}, \mu^{N}_{t})dt+\sigma(t, X^{n, N}_{t}, \mu^{N}_{t})dB_{t}^{n}, \\
\mu^{N}_{t}=\frac{1}{N}\sum_{n=1}^{N}\delta_{X_{t}^{n, N}}\text{ and }  X_{0}^{n, N}\!\!\!, 1\leq n\leq N\widesim{\text{i.i.d}}X_{0}
\end{cases}\]
        
        \vspace{-0.4cm}
        \end{minipage}};
\node[fancytitle, right=10pt] at (Nparticle.north west) {\:\textit{Propagation of chaos}};

\node (theoricalscheme) [mybox2, below =1.5cm of MVequation]
{\begin{minipage}{0.4\textwidth}
\footnotesize
Temporal discretization : 
 $M\in\mathbb{N}^{*}$, $h=\frac{T}{M}$, $t_{m}=m\cdot h$, $Z_{m}\widesim{\text{i.i.d}}\mathcal{N}(0, \mathbf{I}_{q})$,
 
\vspace{-0.1cm}
\[\begin{cases}
\bar{X}_{t_{m+1}}=\bar{X}_{t_{m}}+hb(t_m, \bar{X}_{t_{m}},\bar{\mu}_{t_{m}}) \\
\hspace{1.5cm}+\sqrt{ h}\sigma(t_m, \bar{X}_{t_{m}}, \bar{\mu}_{t_{m}}) Z_{m+1}\\
\bar{\mu}_{t_{m}}=P_{\bar{X}_{t_{m}}} ; \bar{X}_{0}=X_{0}
\end{cases}\]
        
        \vspace{-0.2cm}
        \end{minipage}};
\node (theoricalschemetitle) [fancytitle, right=10pt] at (theoricalscheme.north west) {\:Theoretical Euler scheme \eqref{Ceq} };

\node (schemeNparticle) [mybox, right = 2cm of theoricalscheme]
{    \begin{minipage}{0.42\textwidth}
\footnotesize 

Same temporal discretization as left side with initial random variables
$\bar{X}^{n, N}_{0}, 1\leq n\leq N\widesim{\text{i.i.d}}X_{0}$ , 
\[\begin{cases}
\bar{X}^{n, N}_{t_{m+1}}=\bar{X}^{n, N}_{t_{m}}+hb(t_m, \bar{X}^{n, N}_{t_{m}},\bar{\mu}^{N}_{t_{m}}) \\
\hspace{1.4cm}+\sqrt{ h}\,\sigma(t_m, \bar{X}^{n, N}_{t_{m}}, \bar{\mu}^{N}_{t_{m}}) Z_{m+1}^{n}\\
\bar{\mu}^{N}_{t_{m}}=\frac{1}{N}\sum_{n=1}^{N}\delta_{\bar{X}^{n, N}_{t_{m}}}\\ 
Z_{m}^{n}, 1\leq m\leq M, 1\leq n\leq N\widesim{\text{i.i.d}}\mathcal{N}(0, \mathbf{I}_{q})
\end{cases}\]
        
        \vspace{-0.2cm}
        \end{minipage}};
\node[fancytitle, right=10pt] at (schemeNparticle.north west) {\:Particle method \eqref{Deq} };

\node (quantization) [mybox2, below  = 2cm of theoricalscheme]
{ \begin{minipage}{0.4\textwidth}
\footnotesize
$x^{(m)}=(x_{1}^{(m)}, ..., x_{K}^{(m)})$ is a quantizer for $\bar{\mu}_{t_{m}}$, 
\[\begin{cases}
\widetilde{X}_{t_{m+1}}=\widehat{X}_{t_{m}}+h b(t_m, \widehat{X}_{t_{m}},\widehat{\mu}_{t_{m}}) \\
\hspace{2cm}+\sqrt{ h}\sigma(t_m, \widehat{X}_{t_{m}}, \widehat{\mu}_{t_{m}}) Z_{m+1},\\
\widehat{\mu}_{t_{m}}=P_{\widehat{X}_{t_{m}}}; \;\widehat{X}_{t_{0}}=\Proj_{x^{(0)}}(X_0)\\
\widehat{X}_{t_{m+1}}=\mathrm{Proj}_{x^{(m+1)}}(\widetilde{X}_{t_{m+1}})
\end{cases}\]

        \vspace{-0.4cm}
\end{minipage}
    };
\node[fancytitle, right=10pt] at (quantization.north west) {\:Theoretical quantization scheme \eqref{Eeq} };

\node (Nparticlequantization) [mybox, below = 2cm of schemeNparticle]
{\begin{minipage}{0.42\textwidth}
 \footnotesize
$Z_{m}^{n}\widesim{\text{i.i.d.}}\mathcal{N}(0, \mathbf{I}_{q})$, $x^{(m)}=(x^{(m)}_{1}, ..., x^{(m)}_{K})$ is a quantizer for the $m$-th Euler step,\\
\[\begin{cases}
\widetilde{X}^{n, N}_{t_{m+1}}=\widetilde{X}^{n, N}_{t_{m}}+hb(t_m, \widetilde{X}^{n, N}_{t_{m}},\widehat{\mu}^{K}_{t_{m}}) \\
\hspace{1.4cm}+\sqrt{ h}\sigma(t_m, \widetilde{X}^{n, N}_{t_{m}}, \widehat{\mu}^{K}_{t_{m}}) Z_{m+1}^{n}\\
\widehat{\mu}^{K}_{t_{m}}=\big(\frac{1}{N}\sum_{n=1}^{N}\delta_{\widetilde{X}^{n, N}_{t_{m}}}\big)\circ \Proj_{x^{(m)}}^{-1}\\
\hspace{0.25cm}=\sum_{k=1}^{K} \delta_{x_{k}^{(m)}}\big[\sum_{n=1}^{N}\mathbbm{1}_{V_{k}(x^{(m)})}(\widetilde{X}_{t_{m}}^{n, N})\big] \\ 
\widetilde{X}_{0}^{n, N}=\bar{X}_{0}^{n, N}
\end{cases}\]
        
        \vspace{-0.2cm}
        \end{minipage}};
\node[fancytitle, right=10pt] at (Nparticlequantization.north west) {\: Hybrid particle-quantization scheme \eqref{Feq}};

\node (recursive) [mybox, below =1.6cm  of quantization]
{\begin{minipage}{0.4\textwidth}
\footnotesize
In the Vlasov setting, 
one can compute a Markovian transition: 

\vspace{-0.3cm}
\begin{align}
(&\widehat{X}_{t_{m+1}}, \widehat{\mu}_{m+1})\nonumber\\
&\hspace{0.5cm}=F(\widehat{X}_{t_{m}}, \widehat{\mu}_{m}, Z_{m+1}, x^{(m+1)}),\nonumber
\end{align}

where $F$ has an explicit formula \eqref{Geq},\eqref{Geq2}. 
\end{minipage}
};
\node[fancytitle, right=10pt] at (recursive.north west) {\: Recursive quantization scheme\: \eqref{Geq}};

\draw [arrow] (Nparticle) --node [above] {$\substack{\text{Propagation}\\ \text{of chaos}}$} node [below] {\small\color{black}\cite{gartner1988onthe}\cite{lacker2018mean}}(MVequation);
\draw [arrow] (theoricalscheme) --node [right] {\footnotesize $\substack{\vertii{\bar{X}_{t_{m}}-X_{t_{m}}}_{p}\sim\mathcal{O}(h^{\frac{1}{2}\wedge \gamma})\\\footnotesize\text{see \cite[Prop 2.1]{liu2020functional} }}$}  (MVequation);
\draw [arrow] (schemeNparticle) --node [above] {$\substack{\mathcal{W}_{p}(\bar{\mu}_{t_{m}}^{N},\;\bar{\mu}_{t_{m}})\\\sim\mathcal{O}(N^{-1/d})}$} node [below] {\footnotesize Theorem \ref{thm1}} (theoricalscheme);
\draw [arrow] (quantization) -- node [right] {$\substack{\text{For a fixed $M$, \hspace{1.5cm}}\\\vertii{\widehat{X}_{t_{m}}-\bar{X}_{t_{m}}}_{2}\sim\mathcal{O}(K^{-1/d})\\\text{\footnotesize Theorem \ref{thm:quadbasedscheme}} \hspace{1.7cm}}$}(theoricalscheme);
\draw [arrow] (Nparticlequantization) -- node [right] {\footnotesize$\substack{\text{For a fixed $M$\hspace{1.7cm}}\\\mathcal{W}_{p}(\widehat{\mu}_{t_{m}}^{K},\;\bar{\mu}_{t_{m}}^{N})\sim\mathcal{O}(K^{-1/d})\\\text{\footnotesize  Proposition \ref{quanNparti}} \hspace{1.3cm}}$} (schemeNparticle);
\draw [arrow] (recursive) --node [right]{\footnotesize Section \ref{recurq}} (quantization);
\end{tikzpicture}
\caption{The organisation of different numerical schemes in this paper}
\label{stru}

\vspace{0.1cm}
\end{figure}

\newpage

\begin{algorithm}[H]
\caption{Recursive quantization scheme (without Lloyd's algorithm integrated)}\label{recurquanti1}

\smallskip

\textbf{Input:} The quantization level $K\in\mathbb{N}^{*}$. 

\textbf{Input:} The quantizer sequence $x^{(m)}= (x_{1}^{(m)}, ..., x_{K}^{(m)}),\;0\leq m\leq M$.

\BlankLine
\tcc{Compute the weight vector $p^{(0)}=(p^{(0)}_1, ..., p^{(0)}_K)$ for time $t_0=0$.}

\For{$k$ in $\{1, ..., K\}$}{
\[\hspace{-2cm} p^{(0)}_k=\mu_0\big(V_k(x^{(0)})\big).\]
}

\BlankLine

\tcc{Compute the weight vector $p^{(m+1)}=(p^{(m+1)}_1, ..., p^{(m+1)}_K)$ for time $t_{m+1}$ from $x^{(m)}$ and $p^{(m)}$.}

\BlankLine

\For{$m$ in $\{0, ..., M-1\}$}{

\BlankLine

\For{$j$ in $\{1, ..., K\}$}{
\BlankLine

\For{$i$ in $\{1, ..., K\}$}{
\BlankLine 

Set \vspace{-0.8cm}
\begin{align}
\hspace{-2cm}\widetilde{\mu}_{t_{m+1}}^{\,i}=\mathcal{N}\Big(&x_i^{(m)}\!\! +\! h\sum_{k=1}^{K}p_{k}^{(m)}\beta(t_m, x_i^{(m)}, x_{k}^{(m)}), \nonumber\\
&\qquad h\Big[\sum_{k=1}^{K}p_{k}^{(m)}a(t_m, x_i^{(m)}, x_{k}^{(m)})\Big]^{\top}\Big[\sum_{k=1}^{K}p_{k}^{(m)}a(t_m, x_i^{(m)}, x_{k}^{(m)})\Big]\Big).\nonumber
\end{align}

Compute the transition probability $\pi_{ij}$ in  \eqref{Geq} by  
$\pi_{ij} = \widetilde{\mu}_{t_{m+1}}^{\,i}\left( V_j(x^{(m+1)}) \right)$.}

\BlankLine
Compute the weight $p_{j}^{(m+1)}$ in  \eqref{Geq2} by $p_{j}^{(m+1)}=\sum_{i=1}^{K}\pi_{ij}\cdot p_i^{(m)}$.
}}

\textbf{Output:} The discrete probability distributions $\widehat{\mu}_{t_m}=\sum_{k=1}^{K}p_{k}^{(m)}\delta_{x_{k}^{(m)}},\,0\leq m\leq M.$

\end{algorithm}

\pagebreak

\begin{algorithm}[H]
\caption{Recursive quantization scheme (with Lloyd's algorithm integrated)}\label{recurquanti2}

\BlankLine

\textbf{Input:} The quantization level $K\in\mathbb{N}^{*}$. 

\textbf{Input:} The Lloyd's iteration number $L\in\mathbb{N}^{*}$. 

\textbf{Input:} The initial quantizer sequence $x^{(m,[0])}= (x_{1}^{(m,[0])}, ..., x_{K}^{(m,[0])}),\;0\leq m\leq M$. 

\BlankLine
\BlankLine

\tcc{Compute an optimal quantizer $x^{(0)}\!=\!(x^{(0)}_1, \!..., x^{(0)}_K)$ for $\!\mu_0$ \!by using Lloyd's algorithm. }

\BlankLine

\For{$l$ in $\{0, ..., L-1\}$}{
\BlankLine

\For{$k$ in $\{1, ..., K\}$}{
\vspace{-0.2cm}
\[
x_{k}^{(0,[l+1])}=\begin{cases}
x_{k}^{(0,[l])},&\text{ if } \mu_0 \big(V_{k}(x^{(0,[l])})\big)=0,\\ \\\displaystyle
\frac{\int_{V_{k}(x^{(0,[l])})}\xi\mu_0(d\xi)}{\mu_0 \big(V_{k}(x^{(0,[l])})\big)}, &\text{otherwise.}\end{cases} 
\]
}
}

\BlankLine
\tcc{Compute the weight vector $p^{(0)}=(p^{(0)}_1, ..., p^{(0)}_K)$ for time $t_0=0$.}

\BlankLine
Set  $(x^{(0)}_1, ..., x^{(0)}_K) = (x_{1}^{(0,[L])}, ..., x_{K}^{(0,[L])})$.

\BlankLine
\For{$k$ in $\{1, ..., K\}$}{
\[\hspace{-2cm}p^{(0)}_k=\mu_0\big(V_k(x^{(0)})\big).\]
}



\BlankLine
\tcc{Compute the weight vector $p^{(m+1)}=(p^{(m+1)}_1, ..., p^{(m+1)}_K)$ for time $t_{m+1}$.}
\BlankLine

\For{$m$ in $\{0, ..., M-1\}$}{

\BlankLine

\tcc{Compute an optimal quantizer $x^{(m+1)}\!=\!(x^{(m+1)}_1, \!..., x^{(m+1)}_K)$ \!by using Lloyd's algorithm. }

\BlankLine

\For{$i$ in $\{1, ..., K\}$}{
\BlankLine 

Set \vspace{-0.8cm}
\begin{align}
\hspace{-2cm}\widetilde{\mu}_{t_{m+1}}^{\,i}=\mathcal{N}\Big(&x_i^{(m)}\!\! +\! h\sum_{k=1}^{K}p_{k}^{(m)}\beta(t_m, x_i^{(m)}, x_{k}^{(m)}), \nonumber\\
&\qquad h\Big[\sum_{k=1}^{K}p_{k}^{(m)}a(t_m, x_i^{(m)}, x_{k}^{(m)})\Big]^{\top}\Big[\sum_{k=1}^{K}p_{k}^{(m)}a(t_m, x_i^{(m)}, x_{k}^{(m)})\Big]\Big).\nonumber
\end{align}\vspace{-0.2cm}
}

\For{$l$ in $\{0, ..., L-1\}$}{
\BlankLine
\For{$j$ in $\{1, ..., K\}$}{
\BlankLine

\[
\hspace{-3cm}x_{j}^{(m+1,[l+1])}=\begin{cases}
x_{j}^{(m+1,[l])},\qquad\text{ if } \sum_{i=1}^{K} \widetilde{\mu}_{t_{m+1}}^{\,i}\big( V_{j}(x^{(m+1,[l])})\big)\cdot p_{i}^{(m)}=0,\\ \\
\displaystyle
\frac{\sum_{i=1}^{K} \big(\int_{V_{j}(x^{(m+1,[l])})} \xi \widetilde{\mu}_{t_{m+1}}^{\,i}(d\xi)\big)\cdot p_{i}^{(m)}}{\sum_{i=1}^{K} \widetilde{\mu}_{t_{m+1}}^{\,i}\big( V_{j}(x^{(m+1,[l])})\big)\cdot p_{i}^{(m)}}, \qquad\text{otherwise.}\end{cases} 
\]
}
}
\BlankLine
Set  $(x^{(m+1)}_1, ..., x^{(m)}_K) = (x_{1}^{(m+1,[L])}, ..., x_{K}^{(m+1,[L])})$.

\BlankLine
\tcc{Compute the weight vector $p^{(m+1)}=(p^{(m+1)}_1, ..., p^{(m+1)}_K)$ from $x^{(m)}$, $x^{(m+1)}$ and $p^{(m)}$.}

\For{$j$ in $\{1, ..., K\}$}{
\BlankLine
\For{$i$ in $\{1, ..., K\}$}{
\BlankLine 
Compute the transition probability $\pi_{ij}$ in  \eqref{Geq} by
$\pi_{ij} = \widetilde{\mu}_{t_{m+1}}^{\,i}\left( V_j(x^{(m+1)}) \right)$.

}

 \BlankLine   
Compute the weight $p_{j}^{(m+1)}$ in  \eqref{Geq2} by $p_{j}^{(m+1)}=\sum_{i=1}^{K}\pi_{ij}\cdot p_i^{(m)}$.
}
}

\BlankLine
\textbf{Output:} The discrete probability distributions $\widehat{\mu}_{t_m}=\sum_{k=1}^{K}p_{k}^{(m)}\delta_{x_{k}^{(m)}},\,0\leq m\leq M.$
\end{algorithm}

\pagebreak

\begin{algorithm}[H]
\caption{Hybrid particle-quantization method (without Lloyd's algorithm integrated)}\label{hybridcode1}
\BlankLine
\textbf{Input:} The quantization level $K\in\mathbb{N}^{*}$. 

\textbf{Input:} The quantizer sequence $x^{(m)}= (x_{1}^{(m)}, ..., x_{K}^{(m)}),\;0\leq m\leq M$.

\BlankLine
Generate $\widetilde{X}_{t_0}^{n, N}\widesim{\text{i.i.d}} X_0$ and $Z_m^n \widesim{\text{i.i.d}} \mathcal{N}(0,\mathbf{I}_q)$, $1\leq n \leq N, \;1\leq m \leq M$. 

\BlankLine
\For{$m$ in $\{0, ..., M-1\}$}{
\BlankLine

Compute $\widehat{\mu}^{K}_{t_{m}}=\sum_{k=1}^{K} \big[\delta_{x_{k}^{(m)}}\cdot\sum_{n=1}^{N}\mathbbm{1}_{V_{k}(x^{(m)})}(\widetilde{X}_{t_{m}}^{n, N})\big]$.

\For{$n$ in $\{1, ..., N\}$}{
\BlankLine

Compute $\widetilde{X}^{n, N}_{t_{m+1}}=\widetilde{X}^{n, N}_{t_{m}}+ h \cdot b(t_m, \widetilde{X}^{n, N}_{t_{m}},\widehat{\mu}^{K}_{t_{m}})+\sqrt{h\,} \sigma(t_m, \widetilde{X}^{n, N}_{t_{m}}, \widehat{\mu}^{K}_{t_{m}})Z_{m+1}^{n}$. 
}
}
\BlankLine
\textbf{Output:} The discrete probability distributions $\,\widehat{\mu}^{K}_{t_{m}}\!\!=\!\sum_{k=1}^{K} \!\big[\delta_{x_{k}^{(m)}}\!\cdot\!\sum_{n=1}^{N}\!\mathbbm{1}_{V_{k}(x^{(m)})}(\widetilde{X}_{t_{m}}^{n, N})\big],\,0\!\leq \!m\!\leq \!M.$
\BlankLine
\end{algorithm}

\newpage

\begin{algorithm}[H]
\caption{Hybrid particle-quantization method (with Lloyd's algorithm integrated)}\label{hybridcode2}

\BlankLine
\textbf{Input:} The quantization level $K\in\mathbb{N}^{*}$. 

\textbf{Input:} The Lloyd's iteration number $L\in\mathbb{N}^{*}$. 

\textbf{Input:} The initial quantizer sequence $x^{(m,[0])}= (x_{1}^{(m,[0])}, ..., x_{K}^{(m,[0])}),\;0\leq m\leq M$.

\BlankLine
Generate $\widetilde{X}_{t_0}^{n, N}\widesim{\text{i.i.d}} X_0$ and $Z_m^n \widesim{\text{i.i.d}} \mathcal{N}(0,\mathbf{I}_q)$, $1\leq n \leq N, \;1\leq m \leq M$. 

\BlankLine

\tcc{Compute an optimal quantizer $x^{(0)}\!=\!(x^{(0)}_1, \!..., x^{(0)}_K)$ for $\!\mu_0$ \!by using Lloyd's algorithm. }

\BlankLine

\For{$l$ in $\{0, ..., L-1\}$}{
\BlankLine

\For{$k$ in $\{1, ..., K\}$}{
\vspace{-0.2cm}
\[\hspace{-2cm}
x_{k}^{(0,[l+1])}=\begin{cases}
x_{k}^{(0,[l])},&\text{ if } \mu_0 \big(V_{k}(x^{(0,[l])})\big)=0,\\ \\\displaystyle
\frac{\int_{V_{k}(x^{(0,[l])})}\xi\mu_0(d\xi)}{\mu_0 \big(V_{k}(x^{(0,[l])})\big)}, &\text{otherwise.}\end{cases} 
\]
}
}

\BlankLine

Set  $(x^{(0)}_1, ..., x^{(0)}_K) = (x_{1}^{(0,[L])}, ..., x_{K}^{(0,[L])})$.

\BlankLine

\BlankLine

\For{$m$ in $\{0, ..., M-1\}$}{
\BlankLine

Compute $\widehat{\mu}^{K}_{t_{m}}=\sum_{k=1}^{K} \big[\delta_{x_{k}^{(m)}}\cdot\sum_{n=1}^{N}\mathbbm{1}_{V_{k}(x^{(m)})}(\widetilde{X}_{t_{m}}^{n, N})\big]$.

\For{$n$ in $\{1, ..., N\}$}{
\BlankLine

Compute $\widetilde{X}^{n, N}_{t_{m+1}}=\widetilde{X}^{n, N}_{t_{m}}+ h \cdot b(t_m, \widetilde{X}^{n, N}_{t_{m}},\widehat{\mu}^{K}_{t_{m}})+\sqrt{h\,} \sigma(t_m, \widetilde{X}^{n, N}_{t_{m}}, \widehat{\mu}^{K}_{t_{m}})Z_{m+1}^{n}$. 
}

\BlankLine

\tcc{Compute an optimal quantizer $x^{(m+1)}\!=\!(x^{(m+1)}_1, \!..., x^{(m+1)}_K)$ by using Lloyd's algorithm. }

\For{$l$ in $\{1, ..., L\}$}{
\BlankLine

\For{$k$ in $\{1, ..., K\}$}{
\BlankLine
\[
\hspace{-2cm}x_{k}^{(m+1,[l+1])}=\begin{cases}
x_{k}^{(m+1,[l])},&\text{ if } \sum_{n=1}^{N}\mathbbm{1}_{V_{k}(x^{(m+1,[l])})}(\widetilde{X}^{n, N}_{t_{m+1}})=0,\\ \\\displaystyle
\frac{\sum_{n=1}^{N}\widetilde{X}^{n, N}_{t_{m+1}}\mathbbm{1}_{V_{k}(x^{(m+1,[l])})}(\widetilde{X}^{n, N}_{t_{m+1}})}{\sum_{n=1}^{N}\mathbbm{1}_{V_{k}(x^{(m+1,[l])})}(\widetilde{X}^{n, N}_{t_{m+1}})}, &\text{otherwise.}\end{cases} 
\]
}
}
\BlankLine
Set  $(x^{(m+1)}_1, ..., x^{(m)}_K) = (x_{1}^{(m+1,[L])}, ..., x_{K}^{(m+1,[L])})$.

\BlankLine

}
\BlankLine

\textbf{Output:} The discrete probability distributions $\,\widehat{\mu}^{K}_{t_{m}}\!\!=\!\sum_{k=1}^{K} \!\big[\delta_{x_{k}^{(m)}}\!\cdot\!\sum_{n=1}^{N}\!\mathbbm{1}_{V_{k}(x^{(m)})}(\widetilde{X}_{t_{m}}^{n, N})\big],\,0\!\leq \!m\!\leq \!M.$
\BlankLine
\end{algorithm}

\bibliographystyle{abbrv}
\bibliography{Numerical_McKean_version_revision}

\end{document}